\theoremstyle{definition}
\newtheorem{theorem}{Theorem}
\newtheorem{lemma}{Lemma}
\newtheorem{definition}{Definition}
\newtheorem{Thm}{Theorem}
\newtheorem{proposition}[Thm]{Proposition}
\newtheorem{remark}{Remark}
\renewcommand{\vec}[1]{{\boldsymbol#1}}
\newcommand{\R}{\mathbb{R}}
\newcommand{\C}{\mathbb{C}}
\newcommand{\ie}{\textit{i.e.}\/, }
\newcommand{\eg}{\textit{e.g.}\/, }
\def\e{\epsilon}
\def\O{\Omega}
\def\Z{\mathbb{Z}}
\def\C{\mathbb{C}}
\def\e{\eta}
\def\ue{u_\e}
\def\uek{u_{\e_k}}
\def\uo{u_0(\vec{x},\vec{y})}
\def\ve{v_\e}
\def\wdto{\buildrel\hbox{${\bf R}$}\over
\rightharpoonup \!\!\!\!\! \rightharpoonup}
\def\dto{\buildrel\hbox{${\bf R}$}\over
\to \!\!\!\!\! \to}
\def\tol2{\buildrel\hbox{$L^2$}\over\longrightarrow}
\def\IOm{\int_{\Omega}}
\def\IOAY{\int\!\!\int_{\Omega\times Y^m}}
\def\limf{\displaystyle{\liminf_{\eta\to 0}}}
\def\xxe{\left(\vec{x},\frac{{\bf R}\vec{x}}{\e}\right)}
\def\xy{(\vec{x},\vec{y})}
\providecommand{\keywords}[1]
{
  \small	
  \textbf{\textit{Keywords---}} #1
}
\begin{document}

\title{Homogenization of quasiperiodic structures and two-scale cut-and-projection convergence}

\author{Niklas Wellander$^{1}$, S\'ebastien Guenneau$^{2}$ and Elena Cherkaev$^{3}$\\
\small $^{1}$Swedish Defence Research Agency, FOI P.O. Box 1165
SE-581 11 Linköping
Sweden \\
\small $^{2}$Aix-Marseille Univ, CNRS, Centrale Marseille, Institut Fresnel, 13013 Marseille, France \\
\small $^{3}$University of Utah, Department of Mathematics, 155 South 1400 East, \\
\small JWB 233 Salt Lake City, UT 84112 USA
}
 
 
\maketitle
\begin{abstract}
Quasiperiodic arrangements of the constitutive materials in composites result in effective properties with very unusual electromagnetic and elastic properties. The paper discusses the cut-and-projection method that is used to characterize effective properties of quasiperiodic materials.
 Characterization of cut-and-projection convergence limits of partial differential operators is presented, and correctors are established.
We provide the proofs of the results announced in  (Wellander et al., 2018) 
and give further examples.
 Applications to problems of interest in physics include electrostatic, elastostatic  and quasistatic magnetic cases.
\end{abstract}

\keywords{cut-and-projection, two-scale convergence, composites, quasiperiodic materials, correctors, homogenization theory, effective properties, electrostatics, elastostatics, quasistatic magnetic}
%

\section{Introduction}The search for effective properties of material mixtures dates back to at least the second half of the 19th century starting with the works of Maxwell Garnett, Clausius-Mossotti, and Lord Rayleigh, see \cite{Sihvola1999} for a comprehensive historical survey. The contributions in the field have been in the form of various mixing formulas based on physical insights and simplified models of the effect of dispersed phases in a matrix with landmarks papers by condensed matter physicists such as Bruggeman and Landauer in the first half of the 20th century, see
\cite{levy+stroud1997} for a review, that served as an inspiration for mathematicians such as DeGiorgi and Spagnolo \cite{DeGiorgi+Spagnolo1973},  Tartar\cite{Tartar1978}, Bensoussan, Lions, and Papanicolaou  \cite{Bensoussan+Lions+Papanicolaou1978} and DalMaso  \cite{DalMaso1993}.

A mathematical result in the field of homogenization typically states in which sense the solutions of  sequences of partial differential equations (PDEs) with rapidly varying coefficients converge to the solution of  PDEs with constant coefficients as the coefficient variation becomes more and more rapid. The PDEs with constant coefficients constitute  models of processes taking place in  homogeneous materials, \ie the effective properties of the heterogeneous materials are given by constant coefficients.

There has been a renewed interest in effective medium theories amongst physicists in the last decade as artificial anisotropy plays a crucial role in the design of so-called metamaterials for cloaking of wave propagation and diffusion \cite{FCGE2016}. Such metamaterials are usually aperiodic and high-contrast media.

In \cite{Nguetseng1989}, Nguetseng presented the concept of two-scale convergence, which was further developed in \cite{Allaire1992}. Two-scale convergence turned out to be a very useful concept when homogenizing periodic material mixtures. This is a generalization of the usual weak convergence, in which one uses oscillating test functions to capture oscillations on the same scale as the test functions in the sequence of functions that are investigated.   As a consequence one obtains limit functions that are defined on the product space $\R^n \times ]0,1[^n$. A similar method is the periodic unfolding approach \cite{Cioranescu+etal2008}, in which one first maps the original sequence of functions to a sequence that is defined on $\R^n \times ]0,1[^n$, and then takes the usual weak limit in suitable function spaces, using this extended domain. This approach is similar to the approach proposed in  \cite{Wellander2009}.

It is true that there are composites with a periodic microstructure and many nonperiodic mixtures are very well modeled by periodic composites. However, there are also many material systems that are more cumbersome to model, like the metamaterials mentioned above, such as artificially engineered materials that display a specific pattern, one also encounters stochastic high contrast composites in nature. More intriguing,  researchers also came across large period and quasiperiodic composites in the Koryak Mountains in Eastern Russia: the minerals are made of an alloy of aluminum, copper, and iron \cite{bindi2009}. One can engineer such quasicrystals, for example, mixing two periodic materials may result in a mixture that has a very large periodicity (when there is a common periodicity that is large) or in a quasiperiodic material, in the case of a mixture of materials with rational and irrational periodicity.  Another example is given by moir{\'e} patterns \cite{Sedhed+etal2015}. Back in 1984, a controversial paper by Shechtman, Blech, Gratias, and Cahn on the discovery of a {\it metallic phase with long-range orientational order and no translational symmetry} \cite{Shechtman+etal1984} fueled the interest of physicists and mathematicians alike in quasicrystals.

One feature that makes them particularly appealing to mathematicians is that quasiperiodic materials  can be described by periodic structures in higher spatial dimensions that are cut by hyperplanes and projected onto the lower dimensional space, typically $\R^3$, as proposed by the physicists Duneau and Katz thirty years ago \cite{Duneau+Katz1985}, and then by Whittaker and Whittaker \cite{Whittaker1988}. This opens up the possibility to use standard periodic homogenization tools, e.g., two-scale convergence, to homogenize quasiperiodic materials. To do that, one has to complement existing tools with the cut-and-projection operator, this was done in \cite{Bouchitte+etal2010}.
 In this paper, we revisit this extension, {\em  the two-scale cut-and-projection convergence} method presented in \cite{Bouchitte+etal2010} which  in \cite{Wellander+etal2018}  was further developed to characterize the two-scale limit of differential operators. Interestingly, Braides, Riey, and Solci have independently proposed a $\Gamma$-convergence approach to homogenize Penrose tilings \cite{Braides1986} using general theorems on almost-periodic functions in $W^{1,p}$ spaces \cite{Braides2009}.

The paper is organized as follows. We provide the proofs of the statements in \cite{Wellander+etal2018} and extend the examples to electrostatic, elastostatic, and quasistatic magnetic  problems.
The examples are presented in Section~\ref{sec:Applied_examples}.  Section~\ref{sec:Preliminaries} is devoted to the description of quasicrystals, how they can be modeled, and the basic functional analysis tools to be used in this setting. In Section~\ref{sec:Functionspaces}, we define the differential operators that appear naturally in the context of quasiperiodic materials. We also define and characterize corresponding function spaces to be used in the analysis. Compactness results for the introduced differential operators are presented and proved in Section~\ref{sec:Compactness}. The applied examples discussed in Section~\ref{sec:Applied_examples} are revisited and homogenized in Section~\ref{sec:Examples}.
We present some concluding remarks in Section~\ref{sec:Conclusions}.   Some  non-central lemmas are collected in Appendix~\ref{sec:Appendix}.

\section{Physical problems of interest}\label{sec:Applied_examples}

Throughout this article, we let $\Omega$ be a bounded domain in $\R^3$ with Lipschitz boundary.
\subsection{Electrostatics}
 Consider the electric conductivity problem,
 \begin{equation}  \label{eq:static}
     \left\{
        \begin{aligned}
          -  \mathrm{div} \; \sigma_\e\left( \vec{x} \right)  \mathrm{grad} \; u_\e(\vec{x}) & = f (\vec{x}) \;, \qquad \vec{x} \in \Omega\\
            u_\e|_{\partial\Omega} & = 0
        \end{aligned}
    \right.
 \end{equation}
where $f\in W^{-1,2}(\Omega)$
and $\eta$ is a positive parameter which tends to zero when the fine scale structure in the composite becomes finer and finer.
We assume that $\sigma_\e$ is bounded and coercive, \ie  $\sigma_\e   \in L^{\infty}\left(\Omega; \R^{3\times 3}\right)$ and there exists  a constant $c>0$ such that
   \begin{equation} \label{eq:coersive}
           \sigma_\e(\vec{x}) \xi  \cdot  \xi  \geq c |\xi|^2 , \qquad \mbox{ a.e. }  \vec{x} \in \Omega, \; \forall \xi \in \R^3
   \end{equation}
Standard estimates yield  solutions that are uniformly bounded in $W^{1,2}_0(\Omega)$ with respect to $\e$.

\subsection{Elastostatics}
 Consider the elasticity problem,
 \begin{equation}
 \label{eq:elastatic_eps}
    \left\{
        \begin{aligned}
          -  \mathrm{div} \; {\bf C}_\e\left( \vec{x} \right):  \mathrm{grad} \; {\bf u}_\e(\vec{x}) & = {\bf f} (\vec{x}) \;, \qquad  \vec{x} \in \Omega\\
            {\bf u}_\e{|_{\partial\Omega}} & = {\bf 0}
      \end{aligned}
    \right.
 \end{equation}
 where
${\bf f} \in W^{-1,2}(\Omega,\R^3)$ and $ \mathrm{grad}$ is the symmetrized gradient, \ie
$  \mathrm{grad} \; {\bf u} = \frac{1}{2}(\partial_j u_i +\partial_i u_j)$, $i,j=1, 2, 3$.

We assume that the rank-4 tensor ${\bf C}_\e$, is symmetric bounded and coercive, \ie  ${\bf C}_\e \in L^{\infty}\left(\Omega; \R^{9\times 9}\right)$ and there exists  a constant $c>0$ such that
   \begin{equation} \label{eq:coersive_2}
           C_{\e_{ijkl}}(\vec{x}) \xi_{ij}\xi_{kl}  \geq c \xi_{ij}^2 , \qquad \mbox{ a.e. }  \vec{x} \in \Omega, \; \forall \xi \in \R^{3\times 3}, \; i,j,k,l \in \{1,2,3\}
   \end{equation}
We use Einstein's summation convention over repeated indices. Standard estimates yield  solutions that are uniformly bounded in $W^{1,2}_0(\Omega,\R^3)$ with respect to $\e$.

\subsection{A quasistatic magnetic problem}
Consider the  quasiperiodic heterogeneous quasistatic magnetic problem,
 \begin{equation}  \label{eq:magnetostatic_eps_eta}
     \left\{
        \begin{aligned}
          \mathrm{curl} \, \epsilon^{-1}_\e\left( \vec{x} \right)  \mathrm{curl} \, \vec{u}_\e(\vec{x}) & = \vec{f} (\vec{x}) \,, \qquad \vec{x} \in \Omega\\
            \hat{\vec{\nu}}\times\epsilon^{-1}\mathrm{curl}\,\vec{u}_\e|_{\partial\Omega} & = \vec{0}
        \end{aligned}
    \right.
 \end{equation}
where $ \hat{\vec{\nu}}$ is the unit normal to the boundary. We assume that $\epsilon^{-1}_\e$  is coercive and bounded, \ie  $\epsilon^{-1}_\e   \in L^{\infty}\left(\Omega; \R^{3\times 3}\right)$ and there exists  a constant $c>0$ such that
   \begin{equation} \label{eq:coersive_3}
           \epsilon^{-1}_\e(\vec{x}) \xi  \cdot  \xi  \geq c |\xi|^2 , \qquad \mbox{ a.e. }  \vec{x} \in \Omega, \; \forall \xi \in \R^3
   \end{equation}
The forcing term $\vec{f}$ belongs to the dual of ${\cal H}_0(\mathrm{curl},\Omega)$ and the solutions are uniformly bounded in ${\cal H}_0(\mathrm{curl},\Omega)$ with respect to $\e$.

The homogenization of these three examples when the material properties are assumed to be quasiperiodic will be performed in section 6.

\section{Preliminaries}\label{sec:Preliminaries}

In this section we present the definition of quasicrystals and give the fundamental functional analytical tool, two-scale cut-and-projection convergence.

\subsection{Quasicrystals}
 It is common in homogenization papers to
assume that the medium is periodic, \ie in the electrostatic case that the electric conductivity is a periodic function of the three space
variables. Nonetheless, we shall slightly depart from this
hypothesis by rather assuming that there is some
higher-dimensional
space within which one can define
a periodic conductivity  function
(of more than three variables).
As it turns out, this mathematical game allows for the
analysis of a class of materials which are neither periodic
nor random: Quasicrystalline phases discovered by
Schechtman in the early eighties can be modeled
by taking the cut-and-projection of a periodic structure
in an higher dimensional space (typically $\R^6$ or $\R^{12}$)
onto a hyperplane (such as the Euclidean space $\R^3$).
In the sequel, we will only require the knowledge of a matrix
${\bf R}$ (${\bf R}: \R^n \to \R^m$, $m > n$)
defining this cut-and-projection. In practice, physicists have access to the
opto-geometric properties of a quasicrystal through analysis of the
symmetries of X-ray diffraction patterns (the so-called reciprocal
pseudo-array), which are encompassed in the entries of ${\bf R}$. For instance, the conductivity of
the quasicrystal
$\rm{Al}_{63.5}\rm{Fe}_{12.5}\rm{Cu}_{24}$ is given by ${\bf R}: \R^3 \to \R^6$,
\begin{equation*}
\begin{array}{ll}
\sigma \bigl
( {\bf R}\vec{x} \bigr )
= &
\sigma \bigl ( n_\tau(x_1 + \tau x_2), n_\tau(\tau x_1 + x_3), n_\tau(x_2 +
\tau x_3), \\
& n_\tau(-x_1 + \tau x_2), n_\tau(\tau x_1 - x_3),
n_\tau(-x_2 + \tau x_3) \bigr )
\end{array}
\end{equation*}
where $n_{\tau}$ is the normalization constant
$1/\sqrt{2(2+\tau)}$ with the Golden
number  $\tau$
and $\sigma \in L^{\infty}_{\sharp}(Y^6)$, i.e. the conductivity is bounded almost everywhere on the hypercube $Y^6={]0,1[}^6$
and is periodic.

We note that there is an
ambiguity in the definition of the conductivity as we
could have defined it via a cut-and-projection
from a periodic array in $\R^{12}$ onto $\R^3$ \cite{Duneau+Katz1985}
$\sigma({\bf R}' \vec{x})$ where ${\bf R}': \R^3 \to \R^{12}$, \ie
${\bf R}'$ is a matrix with $12$ rows and $3$ columns and
$\sigma\in L^{\infty}_{\sharp}(Y^{12})$.

However, we shall
see in the next section that the homogenized result does not
actually depend upon ${\bf R}$, that in the general case maps
${\bf R}: \R^n \to \R^m$ ($m > n$), provided it fulfills the
criterion
\begin{equation}\label{eq:criterion}
{\bf R}^T \vec{k} \not = {\bf 0} \; , \; \forall \vec{k} \in
\Z^m \setminus \{{\bf 0}\}
\end{equation}
This criterion corresponds to an irrational slope in the one-dimensional case.
For instance, for $n=1$ and $m=2$, ${\bf R}=(1,\tau)^T$ may generate a 1D quasiperiodic arrangement of materials along the line such that its intervals are of thicknesses $A$ and $B$ alternating according to the   Fibonacci word $ABAABABAABAABABAAB\dots$.  The fibonacci word is the limit of a sequence of words  generated by the rule $S_{n}=S_{n-1}S_{n-2}$ with $S_0 = A$ and $S_1 = AB$.
 Geometrically it can be constructed using a checkerboard-like periodic structure in $2D$ with black and white squares of different materials of respective side lengths $A$ and $B$. The red vertical line of slope $\tau$ in the rotated $y_1-y_2$ reference frame, as shown in Figure \ref{figsiam}, will cut the 2D periodic pattern and produce the sequence  of material properties: $\epsilon_A\epsilon_B\epsilon_A\epsilon_A\epsilon_B\epsilon_A\epsilon_B\epsilon_A\epsilon_A\epsilon_B\epsilon_A\epsilon_A\epsilon_B\dots$, provided that $A/B=\tau$  \cite{Rodriguez+etal2008}.
Of course, the corresponding horizontal line (magenta) will also produce the Fibonacci word. Note that the cut can only start at a unique point in the unit cell to produce the Fibonacci word. Other starting points will give other but similar sequences.

\begin{figure}
\centering
\includegraphics[width=0.7\linewidth]{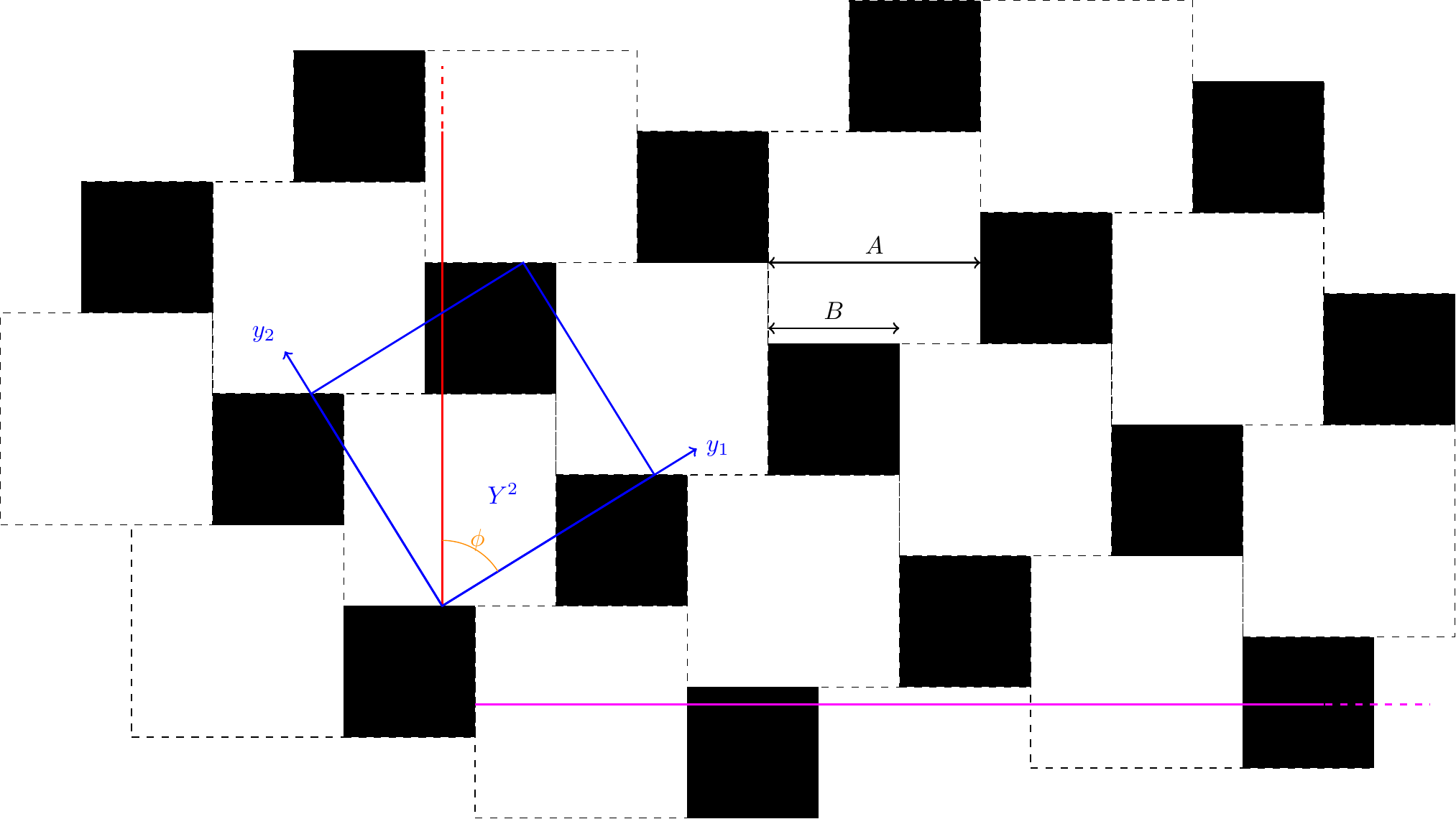}
\caption{Cut-and-projection method applied to a crystal in 2D generating a 1D quasicrystal: The periodic cell $Y^2$, with  side length $B\sqrt{1 + \tau^2}$, makes an irrational angle $\phi$ with the vertical axis (red). When $\phi$ is such that $\tan(\phi)=\tau$, then the vertical axis has slope $\tau$ in the rotated $y_1 - y_2$ coordinate system and it intercepts the crystal which generates an alternation of black and white intervals of lengths $A$ and $B$ such that $A/B=\tau$. In such a way  we get the Fibonacci word defining the sequence of permittivities $\epsilon_A\epsilon_B\epsilon_A\epsilon_A\epsilon_B\epsilon_A\epsilon_B\epsilon_A\epsilon_A\epsilon_B\epsilon_A\epsilon_A\epsilon_B\dots$,.}
\label{figsiam}
\end{figure}
In general, some but not necessarily all the entries of ${\bf R}$ are irrational, as a minimum condition.   We call such projections {\em irrational}, as in \cite{Golden1991}.

\subsection{Two-scale cut-and-projection convergence}

In this section, we recall some properties of two-scale
convergence \cite{Allaire1992} in the quasiperiodic
setting \cite{Bouchitte+etal2010}. Let us
consider a real valued matrix ${\bf R}$  with $m$ rows and $n$
columns ($m>n$). Similarly to the periodic case, our goal is to approximate
an oscillating sequence $\{u_\e(\vec{x})\}_{\eta\in ]0,1[}$ by a
sequence of two-scale locally quasiperiodic   $u_0\left({\vec{x}},\frac{{\bf R}\vec{x}}{\e}\right)$ where $u_0\left({\vec{x}},\cdot\right)$ is $\eta$-periodic on $\R^m$.

As the matrix ${\bf R}$ is not uniquely defined, we first need to
check that if $g$ is a trigonometric polynomial, then the
quasiperiodic function $f = g \circ {\bf R}$ admits the following
(uniquely defined) ergodic mean (for ${\bf R}: \R^n \to \R^m$).
\begin{equation}
L(f) = \lim_{A\to + \infty} \frac{1}{{(2A)}^n}\int_{{]-A,A[}^n} f(\vec{x}) \, \mathrm{d}\vec{x} = \int_{Y^m} g(\vec{y}) \, \mathrm{d}\vec{y} = [g]
\label{ergodic}
\end{equation}
 where $[g]$ denotes the mean of $g$ over the periodic cell $Y^m$ in $\R^m$.
 As shown in \cite{Bouchitte+etal2010} this is the case
 provided that ${\bf R}$ fulfills the criterion  \eqref{eq:criterion}. We recall
the statement and the proof in \cite{Bouchitte+etal2010}  of this elementary
result as it underpins  homogenization of quasicrystals.
\begin{lemma}\label{lem:convergence}
Let ${\bf R}: \R^n\longmapsto \R^m$ ($m\geq n$) satisfy
\eqref{eq:criterion}. Then, (\ref{ergodic}) holds true for any
trigonometric polynomial $g$ on $\R^m$.
\end{lemma}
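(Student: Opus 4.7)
The plan is to reduce the statement to the case of a single Fourier monomial and then handle that case by explicit one-dimensional oscillatory integration. First I would use the linearity of both sides in $g$ to decompose an arbitrary trigonometric polynomial $g(\vec{y}) = \sum_{\vec{k} \in K} c_{\vec{k}} e^{2\pi i \vec{k} \cdot \vec{y}}$, with finite $K \subset \Z^m$, into its monomials, so that it suffices to prove the identity for each fixed $g_{\vec{k}}(\vec{y}) = e^{2\pi i \vec{k} \cdot \vec{y}}$ and then recombine with the coefficients $c_{\vec{k}}$.

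The case $\vec{k} = \vec{0}$ is immediate: both sides equal $1$. For $\vec{k} \neq \vec{0}$ the periodic mean satisfies $[g_{\vec{k}}] = \int_{Y^m} e^{2\pi i \vec{k} \cdot \vec{y}} \, \mathrm{d}\vec{y} = 0$ by orthogonality of characters on $Y^m$. On the other hand, the composition reads
$$f_{\vec{k}}(\vec{x}) = e^{2\pi i \vec{k} \cdot {\bf R}\vec{x}} = e^{2\pi i ({\bf R}^T \vec{k}) \cdot \vec{x}}.$$
Setting $\vec{\alpha} := {\bf R}^T \vec{k} \in \R^n$, hypothesis \eqref{eq:criterion} asserts precisely that $\vec{\alpha} \neq \vec{0}$. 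Hence I would factor the averaged integral over the cube ${]-A,A[}^n$ as the product
$$\frac{1}{(2A)^n} \int_{{]-A,A[}^n} e^{2\pi i \vec{\alpha} \cdot \vec{x}} \, \mathrm{d}\vec{x} = \prod_{j=1}^n \frac{1}{2A} \int_{-A}^{A} e^{2\pi i \alpha_j x_j} \, \mathrm{d}x_j,$$
in which every factor with $\alpha_j = 0$ contributes exactly $1$, while every factor with $\alpha_j \neq 0$ evaluates to $\sin(2\pi A \alpha_j)/(2\pi A \alpha_j)$ and is therefore bounded in absolute value by $1/(2\pi A |\alpha_j|)$. Because at least one coordinate of $\vec{\alpha}$ is non-zero, the whole product is $O(1/A)$ and vanishes as $A \to \infty$, matching $[g_{\vec{k}}]=0$.

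I do not anticipate any genuine obstacle: the real substance of the lemma is the recognition that \eqref{eq:criterion} is exactly the non-resonance condition preventing the transferred frequency ${\bf R}^T \vec{k}$ from being trivial on $\R^n$, so what would otherwise be a Weyl-type equidistribution argument collapses to an elementary product of Dirichlet-type integrals. The only point I would flag for care is the bookkeeping when some but not all coordinates $\alpha_j$ vanish: those factors must cancel cleanly against the normalising $1/(2A)$ without contaminating the $1/A$-decay produced by the remaining non-zero coordinates, which the factorised form above makes transparent.
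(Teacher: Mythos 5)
Your proposal is correct and follows essentially the same route as the paper's proof: decompose $g$ into Fourier monomials, factor the average over the cube ${]-A,A[}^n$ into a product of one-dimensional oscillatory integrals, and use that \eqref{eq:criterion} forces ${\bf R}^T\vec{k}\neq\vec{0}$ so at least one factor decays like $1/A$. Your explicit bookkeeping for the coordinates with $\alpha_j=0$ is in fact slightly more careful than the paper's, which writes the antiderivative $\frac{1}{2i\pi k_l R_{lj}}[e^{2\pi i k_l R_{lj}x_j}]_{-A}^{A}$ before noting ${\rm sinc}(0)=1$, but the substance is identical.
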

\begin{proof}
Let $g$ be a trigonometric polynomial defined for every multi-index
$\vec{k}$ in $\Z^m$ by
\begin{equation*}
\forall \vec{y}\in\R^m \; , \; g(\vec{y}) = \sum_{\scriptstyle \mid
\vec{k}\mid \leq k_0} C_{\vec{k}}e^{2\pi i \vec{k} \cdot \vec{y}}
\end{equation*}
where $k_0$ is a positive integer.
Let us now introduce for every $\vec{k}\in\Z^m$ and $\vec{x}\in\R^n$, the quantity
\begin{equation}
\begin{array}{ll}
\displaystyle{L\left(e^{2\pi i \vec{k} \cdot {\bf R}\vec{x}}\right)}
&=\displaystyle{\lim_{A\to + \infty} \frac{1}{{(2A)}^n}\int_{ {]-A,A[}^{n}}e^{2\pi i \vec{k} \cdot {\bf R}\vec{x}} \, \mathrm{d}\vec{x}} \\
&=\displaystyle{\lim_{A\to + \infty} \prod_{j=1}^n \frac{1}{{2A}} \frac{1}{{2i\pi k_l R_{lj}}} {[e^{2\pi i k_l R_{lj}x_j}]}_{-A}^A } =
\displaystyle{\lim_{A\to + \infty} \prod_{j=1}^n {\rm sinc} (2k_l R_{lj} A)}
\end{array}
\label{sine}
\end{equation}
with Einstein's summation convention on repeated index $l$ and
where the cardinal sine function is defined by $\displaystyle{{\rm
sinc} (x) = \frac{\sin (\pi x)}{\pi x}}$ on $\R\setminus\{0\}$, and  $\displaystyle{{\rm
sinc} (0)=1}$.

If ${\bf R}$ satisfies \eqref{eq:criterion}, then $L(e^{2 i
\pi \vec{k} \cdot {\bf R}\vec{x}}) = 0 \; , \; \forall \vec{k} \in
\Z^m\setminus\{{\bf 0}\}$, so that
\begin{equation*}
L(f)=L(g \circ {\bf R}) = \sum_{\scriptstyle \mid
\vec{k}\mid \leq k_0} C_{\vec{k}}L(e^{2\pi i \vec{k} \cdot {\bf R}\vec{x}}) = C_{\bf 0}  = [g]
\end{equation*}
\end{proof}
Lemma~\ref{lem:convergence} is illustrated in Figure~\ref{fig:Torus} in which the two dimensional unit cell  $Y^2$  in Figure~\ref{figsiam} is represented by the unit torus. The torus is cut densely as the line of irrational slope becomes longer and longer.
\begin{figure}[htp]
\centering
\includegraphics[width=0.9\linewidth]{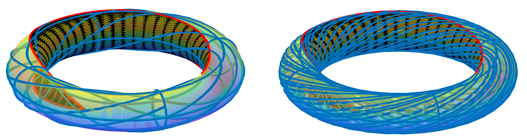}
  \caption{An example of how the unit cell $Y^2$ in Figure~\ref{figsiam} is cut densely by the irrational cut. (Left) The cut of five unit cells, corresponding to five laps on the unit torus. (Right) The cut of twenty five unit  cells.}
  \label{fig:Torus}
\end{figure}
\noindent
The result in Lemma~\ref{lem:convergence} suggests the following definition of two-scale convergence
associated with a matrix ${\bf R}$.
\begin{definition}[Distributional two-scale convergence]
Let $\Omega$ be an open bounded set in $\R^n$ and $Y^m={]0,1[}^m$.
We say that the sequence $(u_\e)$ two-scale converges in the distributional sense towards the
function $u_0\in L^2(\O\times Y^m)$ for a matrix ${\bf R}$  if for every $\varphi\in{\cal
D}(\O;C^\infty_\sharp(Y^m))$
  \begin{equation}\label{eq:strong2scale}
\lim _{\e\to 0} \IOm\ue (\vec{x}) \varphi\left(\vec{x},\frac{ {\bf
R}\vec{x}}{\e}\right) \,\mathrm{d}\vec{x} = \IOAY \uo \phi(\vec{x},\vec{y})\,\mathrm{d}\vec{x}\mathrm{d}\vec{y}
  \end{equation}
\end{definition}

\begin{definition}[Weak two-scale convergence]
Let $\Omega$ be an open bounded set in $\R^n$ and $Y^m={]0,1[}^m$.
We say that the sequence $(u_\e)$ two-scale converges weakly towards the function $u_0\in L^2(\O\times Y^m)$ for a matrix ${\bf R}$  if for every $\varphi\in L^2(\O,C_\sharp(Y^m))$
  \begin{equation}\label{eq:weak2scale}
\lim _{\e\to 0} \IOm\ue (\vec{x}) \varphi\left(\vec{x}, \frac{{\bf R}\vec{x}}{\e}\right) \,\mathrm{d}\vec{x} = \IOAY \uo \phi(\vec{x},\vec{y})\,\mathrm{d}\vec{x}\mathrm{d}\vec{y}
  \end{equation}
\end{definition}
We denote weak two-scale convergence for a matrix ${\bf R}$ with   $\uek \wdto u_0$.
The following result \cite{Bouchitte+etal2010} ensures the existence of such two-scale limits when
the sequence $(u_\e)$ is bounded in $L^2(\O)$ and ${\bf R}$
satisfies \eqref{eq:criterion}.

\begin{proposition}\label{prop:weaktwoscaleprop}
Let $\O$ be an open bounded set in $\R^n$ and $Y^m={]0,1[}^m$. If
${\bf R}: \R^n \to \R^m$ is a linear map   satisfying \eqref{eq:criterion}  and $(\ue)$ is a
bounded sequence in $L^2(\O)$, then there exist a vanishing
subsequence $\e_k$ and a limit $\uo \in L^2(\O\times Y^m)$
($Y^m$-periodic in $\vec{y}$) such that $\uek \wdto u_0$ as $\e_k \to 0$.
\end{proposition}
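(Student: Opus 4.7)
The plan is to adapt the classical Nguetseng--Allaire two-scale compactness argument to the cut-and-projection setting, using Lemma~\ref{lem:convergence} as the substitute for the mean-value lemma that underpins the periodic theory. First I would bootstrap Lemma~\ref{lem:convergence} into the localized oscillation identity
\begin{equation*}
\lim_{\e\to 0}\IOm\varphi\!\left(\vec{x},\frac{{\bf R}\vec{x}}{\e}\right)\,\mathrm{d}\vec{x}
=\IOAY\varphi(\vec{x},\vec{y})\,\mathrm{d}\vec{x}\mathrm{d}\vec{y}
\end{equation*}
for every admissible smooth test function $\varphi\in{\cal D}(\Omega;C_\sharp^\infty(Y^m))$. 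For $\varphi$ of the product form $\psi(\vec{x})g(\vec{y})$ with $g$ a trigonometric polynomial, Fubini reduces this to Lemma~\ref{lem:convergence} integrated against $\psi$; the extension to continuous $Y^m$-periodic $g$ uses Weierstrass approximation, and a standard separability argument promotes it to general $\varphi$.

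Applying the identity to the non-negative function $|\varphi|^{2}$ yields the crucial uniform norm control
\begin{equation*}
\lim_{\e\to 0}\left\|\varphi\!\left(\cdot,\frac{{\bf R}\cdot}{\e}\right)\right\|_{L^2(\Omega)}
=\|\varphi\|_{L^2(\Omega\times Y^m)}.
\end{equation*}
Combined with the $L^{2}(\Omega)$ boundedness of $(\ue)$ and Cauchy--Schwarz, this makes the linear functionals
$T_\e(\varphi):=\IOm\ue(\vec{x})\,\varphi\!\left(\vec{x},\frac{{\bf R}\vec{x}}{\e}\right)\mathrm{d}\vec{x}$
satisfy $\limsup_{\e\to 0}|T_\e(\varphi)|\le C\|\varphi\|_{L^2(\Omega\times Y^m)}$ uniformly in $\e$.

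Since the test space ${\cal D}(\Omega;C_\sharp^\infty(Y^m))$ is separable, a Cantor diagonal extraction on a countable dense subfamily produces a subsequence $\e_k\to 0$ along which $T_{\e_k}(\varphi)$ converges for every $\varphi$ in that subfamily, and hence, by the uniform bound above, for every smooth $\varphi$. Denote the limit by $T(\varphi)$. It is linear and, passing the bound to the limit, satisfies $|T(\varphi)|\le C\|\varphi\|_{L^2(\Omega\times Y^m)}$, so it extends by density to a continuous linear functional on $L^{2}(\Omega\times Y^m)$. The Riesz representation theorem furnishes a unique $u_0\in L^{2}(\Omega\times Y^m)$, automatically $Y^m$-periodic in $\vec{y}$, with $T(\varphi)=\IOAY u_0\varphi\,\mathrm{d}\vec{x}\mathrm{d}\vec{y}$. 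A final density step upgrades the class of admissible test functions from ${\cal D}(\Omega;C_\sharp^\infty(Y^m))$ to $L^2(\Omega;C_\sharp(Y^m))$, which is exactly the content of $\uek\wdto u_0$.

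I expect the main technical obstacle to lie in the first step. Lemma~\ref{lem:convergence} is stated as an ergodic mean over growing cubes in the whole of $\R^n$, whereas here one needs the analogous oscillation statement on a fixed bounded domain $\Omega$ with a nontrivial slow-variable dependence in the test function. Interleaving the ergodic mean with Weierstrass approximation in the fast variable $\vec{y}$ and with an $L^{2}$ density argument in the slow variable $\vec{x}$ is the real analytical content; once the norm convergence of oscillating smooth test functions is established, the Cauchy--Schwarz--diagonal--Riesz machinery is entirely parallel to the periodic Nguetseng--Allaire proof, and the irrationality criterion \eqref{eq:criterion} enters only through Lemma~\ref{lem:convergence}.
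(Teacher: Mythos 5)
Your proposal is correct and is essentially the canonical Nguetseng--Allaire compactness argument transplanted to the cut-and-projection setting via Lemma~\ref{lem:convergence}; the paper itself does not prove this proposition but defers it to Bouchitt\'e et al., who proceed in exactly this way (oscillation identity for smooth test functions, Cauchy--Schwarz, diagonal extraction, Riesz representation). The one obstacle you flag --- passing from the growing-cube ergodic mean of Lemma~\ref{lem:convergence} to the localized identity on the fixed bounded domain $\Omega$ --- is resolved mode by mode rather than by integrating the ergodic mean: for $\vec{k}\neq\vec{0}$ the oscillating factor is the plane wave $e^{2\pi i ({\bf R}^T\vec{k})\cdot\vec{x}/\eta}$, whose frequency diverges precisely because criterion \eqref{eq:criterion} gives ${\bf R}^T\vec{k}\neq\vec{0}$, so the Riemann--Lebesgue lemma kills each nonzero mode against any $\psi\in L^1(\Omega)$ and the Weierstrass/density steps then run exactly as you describe.
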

\noindent
We will need to pass to the limit in integrals
$\displaystyle{\int_{\Omega} u_\e \; v_\e \, \mathrm{d}\vec{x}}$ where
$u_\e\wdto u_0$ and $v_\e\wdto v_0$. For this, we introduce
the notion of strong two-scale
(cut-and-projection) convergence for a matrix ${\bf R}$.
\begin{definition}[Strong two-scale convergence]
A sequence $u_\e$ in $L^2(\O)$ is said to two-scale converge strongly, for a matrix ${\bf R}$, towards a limit $u_0$ in
$L^2_\sharp (\O\times Y^m)$, which we denote $u_\e\dto u_0$, if and
only if $\uek \wdto u_0$ and
\begin{equation*}
\displaystyle{ {\Vert u_\e(\vec{x})\Vert}_{L^2(\O)} \to{\Vert u_0
(\vec{x},\vec{y}) \Vert}_{L^2(\O\times Y^m)}}
\end{equation*}
as $\e_k \to 0$.
\end{definition}

This definition expresses that the effective oscillations of the
sequence $(u_\e)$ have a periodicity that is on the order of $\e$. Moreover,
these oscillations are fully identified by $u_0$.
The following proposition provides us with
a corrector type result for the sequence
$u_\e$ when its limit $u_0$ is smooth enough.
\begin{proposition}
Let ${\bf R}$ be a linear map from $\R^n$ to $R^m$ satisfying
\eqref{eq:criterion}. Let $\ue$ be a sequence bounded in $L^2(\O)$ such
that $\ue\wdto \uo$ (weakly). Then

i) $\ue$ weakly converges in $L^2(\O)$ towards
$\displaystyle{u(\vec{x}) = \int_{Y^m} u_0(\vec{x},\vec{y}) \; \mathrm{d}\vec{y}}$ and
\begin{equation*}
\limf {\Vert\ue\Vert}_{L^2(\O)}\geq {\Vert u_0 \Vert}_{L^2(\O \times
Y^m)} \geq {\Vert u\Vert}_{L^2(\O)}
\end{equation*}

ii) Let $\ve$ be another bounded sequence in $L^2(\O)$ such that
$v_\e\dto v_0$ (strongly), then
\begin{equation*}
\ue\ve\to  w(\vec{x}) \; \hbox{ in ${\cal D}'(\O)$ where
$\displaystyle{w(\vec{x}) = \int_{Y^m}\uo v_0(\vec{x},\vec{y})
\,\mathrm{d}\vec{y}}$}
\end{equation*}

iii) If ${\displaystyle\ue\dto u_0(\vec{x},\vec{y})}$ strongly and
$\displaystyle {{\left\Vert u_0\xxe \right\Vert}_{L^2(\O)} \to{\Vert
u_0\xy\Vert}_{L^2(\O\times Y^m)}}$, then
\begin{equation}
\displaystyle{{\left\| \ue - u_0\xxe \right\|}_{L^2(\O)}\to  0}
\end{equation}
\end{proposition}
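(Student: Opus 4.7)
The strategy common to all three parts is to probe the weak two-scale convergence of $\ue$ against oscillating test functions $\varphi(\vec{x},{\bf R}\vec{x}/\e)$ with smooth $\varphi\in{\cal D}(\O;C^\infty_\sharp(Y^m))$, to exploit the ergodic identity of Lemma~\ref{lem:convergence} applied to $|\varphi|^2$ for norm limits, and to pass from smooth to generic $L^2(\O\times Y^m)$ targets by density. For (i), I would specialize the definition of weak two-scale convergence to a $\vec{y}$-independent $\varphi\in{\cal D}(\O)$, obtaining $\IOm \ue\,\varphi\,d\vec{x}\to\IOm u\,\varphi\,d\vec{x}$ with $u(\vec{x})=\int_{Y^m} u_0(\vec{x},\vec{y})\,d\vec{y}$; since $(\ue)$ is bounded in $L^2(\O)$, density extends this to $\ue\wto u$ weakly in $L^2(\O)$. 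The rightmost inequality follows from Jensen's inequality in the $\vec{y}$-variable (using $|Y^m|=1$), while the middle one follows by testing the Cauchy--Schwarz bound
\[
\IOm \ue\,\varphi(\vec{x},{\bf R}\vec{x}/\e)\,d\vec{x} \leq \|\ue\|_{L^2(\O)}\,\bigl\|\varphi(\vec{x},{\bf R}\vec{x}/\e)\bigr\|_{L^2(\O)}
\]
with smooth $\varphi$, passing to the limit via weak two-scale convergence on the left and Lemma~\ref{lem:convergence} applied to $|\varphi|^2$ on the right, and finally taking the supremum over $\varphi$ in a dense subset of the unit ball of $L^2(\O\times Y^m)$.

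For (ii), I would approximate the strong two-scale limit $v_0$ in $L^2(\O\times Y^m)$ by a smooth $\psi\in{\cal D}(\O;C^\infty_\sharp(Y^m))$ and, for $\varphi\in{\cal D}(\O)$, use the splitting
\[
\IOm \ue\,\ve\,\varphi\,d\vec{x} = \IOm \ue\,\psi(\vec{x},{\bf R}\vec{x}/\e)\,\varphi\,d\vec{x} + \IOm \ue\,\bigl[\ve-\psi(\vec{x},{\bf R}\vec{x}/\e)\bigr]\,\varphi\,d\vec{x}.
\]
The first summand converges to $\IOAY u_0\,\psi\,\varphi\,d\vec{x}\,d\vec{y}$ by weak two-scale convergence of $\ue$ against the admissible product $\psi\varphi$. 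The second is bounded by $\|\ue\|_{L^2(\O)}\,\|\varphi\|_\infty\,\|\ve-\psi(\vec{x},{\bf R}\vec{x}/\e)\|_{L^2(\O)}$; expanding the square and combining $\ve\dto v_0$ strongly with Lemma~\ref{lem:convergence} applied to $|\psi|^2$ yields $\|\ve-\psi(\vec{x},{\bf R}\vec{x}/\e)\|_{L^2(\O)}^2\to\|v_0-\psi\|_{L^2(\O\times Y^m)}^2$, which is arbitrarily small by choice of $\psi$. Letting $\e\to 0$ first and then $\psi\to v_0$ in $L^2(\O\times Y^m)$ produces the stated distributional limit.

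For (iii), I would start from the square expansion
\[
\bigl\|\ue - u_0(\vec{x},{\bf R}\vec{x}/\e)\bigr\|_{L^2(\O)}^2 = \|\ue\|_{L^2(\O)}^2 - 2\IOm \ue\,u_0(\vec{x},{\bf R}\vec{x}/\e)\,d\vec{x} + \bigl\|u_0(\vec{x},{\bf R}\vec{x}/\e)\bigr\|_{L^2(\O)}^2.
\]
The first term tends to $\|u_0\|_{L^2(\O\times Y^m)}^2$ by $\ue\dto u_0$ strongly, and the third does so by the additional norm hypothesis. For the cross term, that same hypothesis together with the density argument of (ii) shows that the deterministic sequence $w_\e(\vec{x}):=u_0(\vec{x},{\bf R}\vec{x}/\e)$ itself two-scale converges strongly to $u_0$; applying (ii) to $\ue$ and $w_\e$ (more precisely, its scalar-integral form with $\varphi\equiv 1$ obtained by exhausting $\O$ by compact subsets) then gives $\IOm \ue\,u_0(\vec{x},{\bf R}\vec{x}/\e)\,d\vec{x}\to\IOAY u_0^2\,d\vec{x}\,d\vec{y}=\|u_0\|_{L^2(\O\times Y^m)}^2$, and the three limits cancel.

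The recurring technical obstacle is the upgrade of weak two-scale information into strong two-scale information via matching $L^2$ norms, which at bottom relies on the ergodic identity $\|\varphi(\vec{x},{\bf R}\vec{x}/\e)\|_{L^2(\O)}\to\|\varphi\|_{L^2(\O\times Y^m)}$ for admissible test functions and its extension to $L^2(\O\times Y^m)$ targets by density. The most delicate instance arises in (iii), where the additional hypothesis must be read as certifying that $u_0(\vec{x},{\bf R}\vec{x}/\e)\dto u_0$ strongly in the sense of the definition; this is precisely what legitimises the self-referential application of (ii) to $\ue$ and $u_0(\vec{x},{\bf R}\vec{x}/\e)$.
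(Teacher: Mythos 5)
Your argument is correct, and it is worth noting that the paper itself states this proposition without proof, deferring to Bouchitt\'e et al.~\cite{Bouchitte+etal2010}; what you have written is the standard Allaire-type argument transported to the quasiperiodic setting via the ergodic identity of Lemma~\ref{lem:convergence}, which is exactly the intended route. Parts (i) and (ii) are carried out as in the periodic case: the duality/Cauchy--Schwarz argument for the lower semicontinuity of the norm, Jensen for the second inequality, and the approximation of $v_0$ by smooth $\psi$ with the expansion of $\|v_\e-\psi(\cdot,{\bf R}\cdot/\e)\|_{L^2}^2$ are all sound, the last step resting on the convergence $\|\psi(\cdot,{\bf R}\cdot/\e)\|_{L^2(\O)}\to\|\psi\|_{L^2(\O\times Y^m)}$ for admissible test functions.

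The one place where your write-up is more roundabout than necessary is (iii). The hypothesis there only makes sense if $u_0$ belongs to an admissible class (e.g.\ $L^2(\O;C_\sharp(Y^m))$), as the paper remarks immediately after the proposition; once that is granted, $u_0$ is itself a legitimate test function in the definition of weak two-scale convergence (which allows $\varphi\in L^2(\O;C_\sharp(Y^m))$), so the cross term $\int_\O u_\e\,u_0(\vec{x},{\bf R}\vec{x}/\e)\,\mathrm{d}\vec{x}$ converges to $\|u_0\|_{L^2(\O\times Y^m)}^2$ directly, with no need for the self-referential application of (ii) and the exhaustion-by-compacts device to replace $\varphi$ by $1$. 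Your detour does work (the tail estimate $\|u_0(\cdot,{\bf R}\cdot/\e)\|_{L^2(\O\setminus K)}$ is controlled uniformly in $\e$ by $\|\sup_{\vec y}|u_0(\cdot,\vec y)|\|_{L^2(\O\setminus K)}$ when $u_0\in L^2(\O;C_\sharp(Y^m))$), but you should state the admissibility assumption on $u_0$ explicitly, since for a generic element of $L^2(\O\times Y^m)$ the trace $u_0(\vec{x},{\bf R}\vec{x}/\e)$ need not even be measurable.
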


Classes of functions such that $\displaystyle {{\left\| u_0 \xxe
\right\|}_{L^2(\O)} \to{\left\Vert u_0\xy\right\Vert}_{L^2(\O\times Y^m)}}$ are
said to be admissible for the two-scale (cut-and-projection)
convergence. In particular, classes of functions in
$L^2(\O,C_\sharp(Y^m))$ (dense subset in $L^2(\O\times Y^m)$) are
admissible.

In order to homogenize PDEs we need to identify the differential relationship between
$\chi$ and $u_0$, given a bounded sequence $(u_\e)$ in $W^{1,2}(\O)$
(such that $u_\e\wdto u_0$ and $\nabla u_\e\wdto \chi$).
This problem has been solved by
Allaire in the case of periodic functions \cite{Allaire1992}
and Bouchitt\'e et al. for quasiperiodic functions
\cite{Bouchitte+etal2010}.
In the latter case, the oscillations of the sequence
$(\nabla u_\e)$ cannot be
represented in general as the usual gradient  of a periodic function.

In \cite{Bouchitte+etal2010}
 the two-scale cut-and-projection limit of the gradients
of bounded sequences in $W^{1,2}(\O)$ were characterized as in the following Proposition.

\begin{proposition}\label{prop:gradtwoscale}
Let ${\bf R}$ be a matrix satisfying \eqref{eq:criterion} and $(\ue)$ a
bounded sequence in $W^{1,2}(\O)$. Then, there exist $u_0 \in W^{1,2}(\O)$ and $ \vec{w} \in L^2(\O,{\cal L}_{\bf R})$, and a subsequence (still denoted by $(\ue)$) such that
\begin{equation*}
\ue\dto u_0(\vec{x}) \; , \mbox{ and } \nabla\ue\wdto\nabla u_0(\vec{x}) +  \vec{w} \xy
\end{equation*}
where
\begin{equation*}
{\cal L}_{\bf R} = \Bigl \{
 \vec{v} \in L^2_{\sharp}(Y^m; \R^n) \,\mid\,\  \vec{v}(\vec{y}) = \sum_{\vec{k}\in \Z^m \setminus \{\vec{0}\}} \lambda_{\vec{k}} \; {\bf
R}^T\vec{k} e^{2\pi i \vec{k} \cdot \vec{y}} \; , \; \lambda_{\vec{k}} \in \C \Bigr \} 
\end{equation*}
\end{proposition}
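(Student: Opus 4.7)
The plan is to mimic the classical Allaire strategy, replacing the role of the ordinary periodic divergence by the ${\bf R}$-twisted divergence $\operatorname{div}_y({\bf R}\,\cdot)$ that arises naturally from the chain rule under cut-and-projection. First, since $(\ue)$ is bounded in $W^{1,2}(\Omega)$, Rellich's theorem allows me to extract a subsequence with $\ue\to u_0$ strongly in $L^2(\Omega)$ and weakly in $W^{1,2}(\Omega)$, with limit $u_0\in W^{1,2}(\Omega)$. Proposition~\ref{prop:weaktwoscaleprop} applied to the bounded sequence $\nabla\ue$ in $L^2(\Omega;\R^n)$ then furnishes a further subsequence and a weak two-scale limit $\vec{\xi}\in L^2(\Omega\times Y^m;\R^n)$ with $\nabla\ue\wdto\vec{\xi}$. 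Strong $L^2$-convergence of $\ue$ upgrades automatically to $\ue\dto u_0(\vec{x})$ (viewed as $\vec{y}$-independent), since $\|\ue\|_{L^2(\Omega)}\to\|u_0\|_{L^2(\Omega)}=\|u_0\|_{L^2(\Omega\times Y^m)}$ and the weak two-scale limit is identified through Lemma~\ref{lem:convergence}. Testing $\nabla\ue\wdto\vec{\xi}$ against $\vec{\psi}(\vec{x})\in\mathcal{D}(\Omega;\R^n)$ (no $\vec{y}$-dependence) reproduces the ordinary weak limit, so $\int_{Y^m}\vec{\xi}(\vec{x},\vec{y})\,\mathrm{d}\vec{y}=\nabla u_0(\vec{x})$. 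Writing $\vec{w}(\vec{x},\vec{y}):=\vec{\xi}(\vec{x},\vec{y})-\nabla u_0(\vec{x})$, it remains to prove $\vec{w}\in L^2(\Omega,\mathcal{L}_{\bf R})$.

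The core identification rests on the chain-rule computation: for any $\vec{\Psi}\in\mathcal{D}(\Omega;C^\infty_\sharp(Y^m;\C^n))$,
\begin{equation*}
\operatorname{div}_x\bigl[\vec{\Psi}(\vec{x},{\bf R}\vec{x}/\e)\bigr] = (\operatorname{div}_x\vec{\Psi})(\vec{x},{\bf R}\vec{x}/\e) + \frac{1}{\e}\bigl(\operatorname{div}_y({\bf R}\vec{\Psi})\bigr)(\vec{x},{\bf R}\vec{x}/\e),
\end{equation*}
where the appearance of ${\bf R}\vec{\Psi}$ (an $\R^m$-valued field) inside $\operatorname{div}_y$ comes from transposing the Jacobian of $\vec{x}\mapsto{\bf R}\vec{x}/\e$. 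Restricting to solenoidal test fields satisfying $\operatorname{div}_y({\bf R}\vec{\Psi})\equiv 0$, the singular $\e^{-1}$ contribution vanishes. Integration by parts in $\vec{x}$ and passage to the two-scale limit, using the strong two-scale convergence of $\ue$ to handle the product on the right-hand side, yield (after restoring $\nabla u_0$ by a second integration by parts on the limiting integral)
\begin{equation*}
\int\!\!\!\int_{\Omega\times Y^m}\vec{w}(\vec{x},\vec{y})\cdot\vec{\Psi}(\vec{x},\vec{y})\,\mathrm{d}\vec{x}\,\mathrm{d}\vec{y}=0
\end{equation*}
for every admissible $\vec{\Psi}$.

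To extract the Fourier structure of $\vec{w}$, I would specialize the admissible test fields to the exponential ansatz $\vec{\Psi}(\vec{x},\vec{y})=\phi(\vec{x})\,\vec{c}\,e^{-2\pi i\vec{k}\cdot\vec{y}}$, with $\phi\in\mathcal{D}(\Omega)$, $\vec{k}\in\Z^m$, and $\vec{c}\in\C^n$ chosen to satisfy $({\bf R}^T\vec{k})\cdot\vec{c}=0$, which is exactly the condition $\operatorname{div}_y({\bf R}\vec{\Psi})=0$. The orthogonality relation then forces, for every such $\vec{c}$, the $\vec{k}$-th Fourier coefficient of $\vec{w}(\vec{x},\cdot)$ to be orthogonal in $\C^n$ to the hyperplane $\{\vec{c}:({\bf R}^T\vec{k})\cdot\vec{c}=0\}$, hence parallel to ${\bf R}^T\vec{k}$; the $\vec{k}=\vec{0}$ case (where the constraint is vacuous) recovers the zero-mean property already established. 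This produces the expansion
\begin{equation*}
\vec{w}(\vec{x},\vec{y})=\sum_{\vec{k}\in\Z^m\setminus\{\vec{0}\}}\lambda_{\vec{k}}(\vec{x})\,{\bf R}^T\vec{k}\,e^{2\pi i\vec{k}\cdot\vec{y}},
\end{equation*}
with $\lambda_{\vec{k}}(\cdot)\in L^2(\Omega;\C)$, i.e., $\vec{w}\in L^2(\Omega,\mathcal{L}_{\bf R})$.

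The hard part will be this last step: one must check that the exponential test fields genuinely exhaust the solenoidal constraint and resolve the entire Fourier content of $\vec{w}$, which is precisely where the irrationality criterion \eqref{eq:criterion} becomes indispensable. Indeed, \eqref{eq:criterion} ensures ${\bf R}^T\vec{k}\neq\vec{0}$ for every $\vec{k}\in\Z^m\setminus\{\vec{0}\}$, so that the orthogonal complement of ${\bf R}^T\vec{k}$ in $\C^n$ is a proper hyperplane and the resulting structural constraint on the corrector is nontrivial on every non-trivial mode; without \eqref{eq:criterion} the identification of $\vec{w}$ with an element of $\mathcal{L}_{\bf R}$ would collapse.
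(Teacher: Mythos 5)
Your proposal is correct and follows essentially the same route the paper takes (for this statement and its recast version, Proposition~\ref{prop:grad-split}): extract the two-scale limit $\vec{\chi}$ of $\nabla u_\e$, test against fields $\phi_1(\vec{x})\vec{\phi}_2({\bf R}\vec{x}/\e)$ satisfying $\mathrm{div}_{\bf R}\vec{\phi}_2=\nabla_{\vec{y}}\cdot({\bf R}\vec{\phi}_2)=0$ so the singular $\e^{-1}$ term drops, and identify $\vec{\chi}-\nabla u_0$ mode by mode as parallel to ${\bf R}^T\vec{k}$. Your explicit exponential test fields with $({\bf R}^T\vec{k})\cdot\vec{c}=0$ are exactly the Fourier computation the paper packages into the orthogonal decomposition of Lemma~\ref{lem:orthogonality}, so the two arguments coincide in substance.
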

Note that the sequence $(\lambda_{\vec{k}})$ which appears in the definition of
${\cal L}_{{\bf R}}$ does not necessarily belong to $l^2$. It only satisfies
$\sum_{\vec{k}\in\Z^m\setminus\{{\bf 0}\}} {\mid\lambda_{\vec{k}}\mid}^2 \; {\mid {\bf R}^T\vec{k}\mid}^2 < +\infty
$.

 In Section~\ref{sec:Compactness} (Proposition~\ref{prop:grad-split}) the limit $\vec{w}$ will be further characterized which will recast  Proposition~\ref{prop:gradtwoscale} into a more explicit form. The counterparts for the convergence of divergence and curl operators will be stated and proved as well.
We begin with the definition of some appropriate function spaces and
differential operators associated with the cut-and-projection method.

\section{Definition of function spaces and associated cut-and-projection differential operators and some of their properties}\label{sec:Functionspaces}

To carry out the homogenization analysis of PDEs defined on quasiperiodic domains, we need to pass to the limit
when $\eta$ goes to zero in gradient-,
divergence- and curl-operators acting on solutions of PDEs. To do this we introduce some suitable function spaces.
 Assuming we are considering PDEs defined on domains $\Omega \subset \R^n$,  we consider a matrix ${\bf R}$
with $m$ rows and $n$ columns, \ie  ${\bf R}: \R^n \to \R^m$
satisfying \eqref{eq:criterion}.

Any $u\in {L^2(Y^m)}$ can be used to  define a function $ u_{\bf R}  \in {L^2(\Omega)}$, by the cut-and-projection operation,
 \begin{equation*}
 u_{\bf R}(\vec{x}) = u( {\bf R}\vec{x})
 \end{equation*}
\noindent
The mapping $\vec{y} = {\bf R}\vec{x}$ and the chain rule yields that partial derivatives transform as
\begin{equation}\label{eq:projected_partial_derivative}
\frac{\partial}{\partial x_i} = \sum_{j=1}^m{\bf R}_{ji}\frac{\partial}{\partial y_j}
\end{equation}

Hence, the gradient of $u_{\bf R}$  is given by
 \begin{equation*}
\nabla  u_{\bf R}(\vec{x}) =
\mathrm{grad}\; u_{\bf R}(\vec{x}) =
\left({\bf R}^T \mathrm{grad}_{\vec{y}}\right)  u_{\bf R}(\vec{x}) =
\left({\bf R}^T \nabla_{\vec{y}}\right)  u_{\bf R}(\vec{x})
 \end{equation*}
where $\nabla$ and $\mathrm{grad}$ denote the usual gradient operator in $\R^n $, and $\nabla_{\vec{y}}$ and $\mathrm{grad}_{\vec{y}}$ denote the gradient operator in the range of ${\bf R}$ in $\R^m$.
\noindent
The  divergence and curl of vector valued functions  on $\Omega$ are defined as
\begin{equation*}
\nabla \cdot \vec{u}_{\bf R}(\vec{x}) =
\mathrm{div}\;\vec{u}_{\bf R}(\vec{x}) =
\left({\bf R}^T \nabla_{\vec{y}}\right) \cdot \vec{u}_{\bf R}(\vec{x})
\end{equation*}
and for $n=3$, we get
\begin{equation*}
\nabla  \times \vec{u}_{\bf R}(\vec{x}) =
\mathrm{curl}\;\vec{u}_{\bf R}(\vec{x}) =
 \left({\bf R}^T \nabla_{\vec{y}}\right) \times  \vec{u}_{\bf R}(\vec{x})
\end{equation*}

We can use these representations to define ${\bf R}$-dependent gradient, divergence  and curl operators acting on functions defined on domains in $\R^m$. They are
\begin{definition}\label{def:R-differetnialoperators}
\begin{equation*}
 \nabla_{\bf R}\;u(\vec{y}) =
 \mathrm{grad}_{\bf R}\;u(\vec{y}) :=
 \left({\bf R}^T \nabla_{\vec{y}}\right)  u(\vec{y})
\end{equation*}
\begin{equation*}
\nabla_{\bf R}\cdot \vec{u}(\vec{y}) =
\mathrm{div}_{\bf R}\;\vec{u}(\vec{y}) :=
\left({\bf R}^T \nabla_{\vec{y}}\right) \cdot \vec{u}(\vec{y})
\end{equation*}
 for $n=3$,
\begin{equation*}
\nabla_{\bf R} \times \vec{u}(\vec{y}) =
\mathrm{curl}_{\bf R}\;\vec{u}(\vec{y}) :=
 \left({\bf R}^T \nabla_{\vec{y}}\right) \times  \vec{u}(\vec{y})
\end{equation*}
\end{definition}

\begin{remark}\label{rem:divR_and_curlR}
The gradient operator $\mathrm{grad}_{\bf R}$  is a directional derivative given by  the projection on $\R^n$ of the usual gradient in $\R^m$. The divergence and curl operators are obtained   using the same projection  in combination with the usual nabla rules.  The  $\mathrm{div}_{\bf R}$ operator can also be interpreted as the divergence in $\R^m$ of the $m-$component vector ${\bf R} \vec{u}$, \ie $\mathrm{div}_{\bf R}\; \vec{u} = \nabla_{\vec{y}} \cdot {\bf R} \vec{u}$.
We  notice also that the
$ \mathrm{curl}_{\bf R}$ operator has the following  structure
\begin{equation*}
\begin{aligned}
&\mathrm{curl}_{\bf R}\, \vec{u}(\vec{y})   = {\bf R}^T \times \mathrm{grad}_{\vec{y}} \vec{u}^T(\vec{y}) := \\
&
({\bf R}_2^T  \mathrm{grad}_{\vec{y}} u_3 - {\bf R}_3^T \mathrm{grad}_{\vec{y}} u_2 ,  {\bf R}_3^T  \mathrm{grad}_{\vec{y}} u_1 - {\bf R}_1^T \mathrm{grad}_{\vec{y}} u_3 ,{\bf R}_1^T  \mathrm{grad}_{\vec{y}} u_2 - {\bf R}_2^T \mathrm{grad}_{\vec{y}} u_1)^T
\end{aligned}
\end{equation*}
where ${\bf R}_i^T$ is the $i$:th row of the transposed matrix  ${\bf R}^T$.
 \end{remark}

We define the following function spaces associated with the differential operators defined above
\begin{equation}
\displaystyle
{\cal H}_{\sharp}(\mathrm{grad}_{\bf R},Y^m) = \displaystyle{\Bigl \{u\in
{L^2_\sharp(Y^m)} \; \mid \; \mathrm{grad}_{\bf R}\;u \in
{L^2_\sharp(Y^m; \R^n)}
\Bigr \}
}
\label{def:grad-Yspace}
\end{equation}
\begin{equation}
\displaystyle
{\cal H}_{\sharp}(\mathrm{div}_{\bf R}, Y^m) :=  \displaystyle{\Bigl \{ \vec{u}\in
{L^2_\sharp(Y^m; \R^n)} \; \mid \; \mathrm{div}_{\bf R}\;\vec{u}  \in
{L^2_\sharp(Y^m)}
\Bigr \}
}
\label{def:divR-Yspace}
\end{equation}
\begin{equation}
\displaystyle
{\cal H}_{\sharp}(\mathrm{curl}_{\bf R}, Y^m) := \displaystyle{\Bigl \{ \vec{u}\in
{L^2_\sharp(Y^m; \R^3 )} \; \mid \; \mathrm{curl}_{\bf R}\;\vec{u} \in
{L^2_\sharp(Y^m; \R^3 )}
\Bigr \}
}
\label{def:curl-Yspace}
\end{equation}
and
\begin{equation}
\displaystyle
{\cal H}_{\sharp}(\mathrm{div}_{{\bf R}_0}, Y^m) :=  \displaystyle{\Bigl \{ \vec{u}\in
{\cal H}_{\sharp}(\mathrm{div}_{\bf R}, Y^m) \; \mid \; \mathrm{div}_{\bf R}\;\vec{u} \equiv 0 \Bigr \}
}
\label{def:div_0-Yspace}
\end{equation}
\begin{equation}
\displaystyle
{\cal H}_{\sharp}(\mathrm{curl}_{{\bf R}_{\vec{0}}}, Y^m) :=  \displaystyle{\Bigl \{ \vec{u}\in
{\cal H}_{\sharp}(\mathrm{curl}_{\bf R}, Y^m) \; \mid \; \mathrm{curl}_{\bf R}\;\vec{u} \equiv \vec{0} \Bigr \}
}
\label{def:curl_0-Yspace}
\end{equation}

We will  use the following spaces
\begin{equation}
\displaystyle
{W^{1,2}}(\Omega) :=  \displaystyle{\Bigl \{ u\in {L^2(\Omega)} \; \mid \; \mathrm{grad}\;u(\vec{x}) \in
{L^2(\Omega; \R^n)}
\Bigr \}}
\label{def:grad-space}
\end{equation}
\begin{equation}
\displaystyle
{W^{1,2}_0}(\Omega) :=  \displaystyle{ \Bigl\{ u\in W^{1,2}(\Omega)\; \mid \;  u|_{\partial \Omega} = \vec{0}
\Bigr \}
}
\label{def:grad_0-space}
\end{equation}
\begin{equation}
\displaystyle
{\cal H}(\mathrm{div}, \Omega) :=  \displaystyle{\Bigl \{ \vec{u}\in
{L^2( \Omega; \R^n )} \; \mid \; \mathrm{div}\;\vec{u}\in
{L^2(\Omega)}
\Bigr \}
}
\label{def:div-space}
\end{equation}
\begin{equation}
\displaystyle
{\cal H}(\mathrm{curl},\Omega) :=  \displaystyle{\Bigl \{ \vec{u}\in
{L^2(\Omega; \R^3 )} \; \mid \; \mathrm{curl}\;\vec{u}  \in
{L^2(\Omega; \R^3 )}
\Bigr \}
}
\label{def:curl-space}
\end{equation}
\begin{equation}
\displaystyle
{\cal H}_0(\mathrm{curl},\Omega) :=  \displaystyle{\Bigl \{ \vec{u}\in
{\cal H}(\mathrm{curl},\Omega) \; \mid \; \hat{\vec{\nu}}\times \vec{u}|_{\partial \Omega} = \vec{0}
\Bigr \}
}
\label{def:curl0-space}
\end{equation}
where $\hat{\vec{\nu}}$ is the unit normal to the boundary, $\partial \Omega$.
We have the following lemma which states that any $\mathrm{curl}_{\bf R}$-free
  vector field is given by a $\mathrm{grad}_{\bf R}$ of some potential.
  \begin{lemma}\label{lem:curlfree}
Any function $\vec{u} \in {\cal H}_{\sharp}(\mathrm{curl}_{{\bf R}_{\vec{0}}}, Y^m)$ is given as $\vec{u} = \mathrm{grad}_{\bf R}\;  \phi(\vec{y})$  for some scalar potential $\phi$ defined on $Y^m$.
\end{lemma}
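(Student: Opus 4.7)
The plan is to diagonalise both $\mathrm{curl}_{\bf R}$ and the desired potential relation by Fourier analysis on the torus $Y^m$. First I would expand an arbitrary $\vec{u} \in L^2_\sharp(Y^m;\R^3)$ as
\[
\vec{u}(\vec{y}) = \sum_{\vec{k}\in\Z^m} \vec{c}_{\vec{k}}\, e^{2\pi i \vec{k}\cdot \vec{y}}, \qquad \vec{c}_{\vec{k}} \in \C^3,
\]
and apply $\mathrm{curl}_{\bf R}$ term-by-term. Since $({\bf R}^T\nabla_{\vec y})\, e^{2\pi i \vec{k}\cdot \vec{y}} = 2\pi i\,{\bf R}^T \vec{k}\, e^{2\pi i \vec{k}\cdot \vec{y}}$, Definition~\ref{def:R-differetnialoperators} yields
\[
\mathrm{curl}_{\bf R}\vec{u}(\vec{y}) = 2\pi i \sum_{\vec{k}\in\Z^m} ({\bf R}^T \vec{k})\times \vec{c}_{\vec{k}}\, e^{2\pi i \vec{k}\cdot \vec{y}},
\]
the identity being justified on trigonometric polynomials and then extended to ${\cal H}_\sharp(\mathrm{curl}_{\bf R},Y^m)$ by density.

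Next, by uniqueness of the Fourier coefficients the hypothesis $\mathrm{curl}_{\bf R}\vec{u}\equiv\vec{0}$ is equivalent to $({\bf R}^T\vec{k})\times \vec{c}_{\vec{k}}=\vec{0}$ for every $\vec{k}\in\Z^m$. The irrationality criterion \eqref{eq:criterion} guarantees ${\bf R}^T\vec{k}\neq\vec{0}$ for every $\vec{k}\neq\vec{0}$, so for such $\vec{k}$ the vector $\vec{c}_{\vec{k}}$ must be parallel to ${\bf R}^T\vec{k}$, i.e.\ there exists $\lambda_{\vec{k}}\in\C$ with $\vec{c}_{\vec{k}} = \lambda_{\vec{k}}\,{\bf R}^T\vec{k}$. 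This is precisely the characterisation of the space ${\cal L}_{\bf R}$ introduced in Proposition~\ref{prop:gradtwoscale}, and it shows that $\vec{u}$ must belong to that space modulo its constant Fourier mode $\vec{c}_{\vec{0}}$. The zero mode can only be realised as a $\mathrm{grad}_{\bf R}$ of a periodic $\phi$ when it vanishes, so the statement is to be read with this implicit normalisation (or, equivalently, with equality modulo constants).

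Having identified the $\lambda_{\vec{k}}$, I would then formally define the scalar potential by
\[
\phi(\vec{y}) = \sum_{\vec{k}\in\Z^m\setminus\{\vec{0}\}} \frac{\lambda_{\vec{k}}}{2\pi i}\, e^{2\pi i \vec{k}\cdot \vec{y}},
\]
and verify by a direct term-by-term computation that $\mathrm{grad}_{\bf R}\phi = \vec{u}$. The main obstacle is the regularity of $\phi$: from $\vec{u}\in L^2_\sharp(Y^m;\R^3)$ one only obtains $\sum_{\vec{k}\neq\vec{0}} |\lambda_{\vec{k}}|^2\,|{\bf R}^T\vec{k}|^2 < \infty$, and because of the small-divisor phenomenon inherent to quasiperiodic structures, the values $|{\bf R}^T\vec{k}|$ can be arbitrarily small for large $|\vec{k}|$, so the square-summability of $(\lambda_{\vec{k}})$ is not ensured. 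Consequently $\phi$ generally lives only in a weaker distributional space attached to ${\cal L}_{\bf R}$, while $\mathrm{grad}_{\bf R}\phi$ is a genuine $L^2_\sharp$ function. The write-up should therefore state that $\phi$ is determined on $Y^m$ in this weak sense, the identity $\vec{u}=\mathrm{grad}_{\bf R}\phi$ being understood through the action of $\mathrm{grad}_{\bf R}$ on formal Fourier series matched with the given $L^2$-summability $\sum |\lambda_{\vec{k}}|^2|{\bf R}^T\vec{k}|^2<\infty$.
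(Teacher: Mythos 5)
Your proof is correct and follows essentially the same route as the paper's: Fourier expansion on $Y^m$, term-by-term application of $\mathrm{curl}_{\bf R}$, the cross-product condition $({\bf R}^T\vec{k})\times\vec{c}_{\vec{k}}=\vec{0}$ forcing $\vec{c}_{\vec{k}}=\lambda_{\vec{k}}{\bf R}^T\vec{k}$, and reconstruction of $\phi$ as a formal Fourier series. You are in fact slightly more careful than the paper on two points it leaves implicit --- the constant mode $\vec{c}_{\vec{0}}$ and the possible failure of $\phi$ to lie in $L^2_\sharp(Y^m)$ due to small divisors, the latter of which the paper only addresses in the remark following the lemma.
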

\begin{proof}
Any $u\in {L^2(Y^m)}$ can be represented by a Fourier series,
\begin{equation*}
u(\vec{y}) = \sum_{\vec{k}\in\Z^m}
 u_{\vec{k}}
e^{2\pi i \vec{k}\cdot \vec{y}} ,   \qquad  u_{\vec{k}}\in \C
\end{equation*}
and the ${L^2_\sharp(Y^m; \R^n)}$-bounded  ${\bf R}$-gradient,   $\mathrm{grad}_{\bf R}\,\phi$, of any potential $\phi$, bounded or not in ${L^2_\sharp(Y^m)}$, can similarly be represented by  its Fourier series,
\begin{equation*}
\mathrm{grad}_{\bf R}\;  \phi(\vec{y}) =  \sum_{\vec{k}\in\Z^m}
 2\pi i {\bf R}^T\vec{k}   \phi_{\vec{k}}
e^{2\pi i  \vec{k}\cdot \vec{y}} ,   \qquad \phi_{\vec{k}}\in \C
\end{equation*}
and all vector valued function $\vec{u} \in {\cal H}_{\sharp}(\mathrm{curl}_{{\bf R}_{\vec{0}}}, Y^m)$ can be  represented analogously
\begin{equation*}
\vec{u}(\vec{y}) = \sum_{\vec{k}\in\Z^m}
 \vec{u}_{\vec{k}}
e^{2\pi i \vec{k}\cdot \vec{y}},   \qquad  u_{\vec{k}}\in \C^3
\end{equation*}
such that
\begin{equation*}
\mathrm{curl}_{\bf R}\;\vec{u}(\vec{y}) = \sum_{\vec{k}\in\Z^m}
 2\pi i {\bf R}^T\vec{k} \times \vec{u}_{\vec{k}}
e^{2\pi i \vec{k}\cdot \vec{y}} = \vec{0}
\end{equation*}
Since trigonometric functions are complete in $ {L^2(Y^m)}$, every Fourier coefficient of
$ \mathrm{curl}_{\bf R}\;\vec{u}(\vec{y}) $ is zero
\begin{equation*}
 2\pi i {\bf R}^T\vec{k} \times \vec{u}_{\vec{k}}  =  \vec{0}
\end{equation*}
Here,  vectors  $ {\bf a} =  2\pi i {\bf R}^T\vec{k}$  and $ {\bf b} =  \vec{u}_{\vec{k}}$ are in $\R^3$ and $\C^3$, respectively.
For any 3D vectors $ {\bf a},  {\bf b} $, such that $ {\bf a} \times  {\bf b}  = \vec{0}$,  vector ${\bf b}$ can be written as
$ {\bf b} = C  {\bf a} $ for a complex scalar constant $C$. Hence,   $ \vec{u}_{\vec{k}} $ can be represented as
\begin{equation*}
 \vec{u}_{\vec{k}}  = 2\pi i {\bf R}^T\vec{k}   \phi_{\vec{k}}
\end{equation*}
for a complex scalar $  \phi_{\vec{k}}\in \C$. This gives
\begin{equation*}
\vec{u}(\vec{y}) = \sum_{\vec{k}\in\Z^m}
 \vec{u}_{\vec{k}}
e^{2\pi i \vec{k}\cdot \vec{y}}    =
 \sum_{\vec{k}\in\Z^m}  2\pi i {\bf R}^T\vec{k}   \phi_{\vec{k}}  e^{2\pi i \vec{k}\cdot \vec{y}}  =
 \mathrm{grad}_{\bf R}\;  \phi(\vec{y})
\end{equation*}
\end{proof}
\begin{remark}
Note that it is not necessary that $\phi  \in L^2(Y^m)$ to make $\mathrm{grad}_{\bf R}\;  \phi $ bounded in  $ L^2(Y^m; \R^n)$.
\end{remark}
We are now in the position to   state that spaces ${\cal H}_{\sharp}(\mathrm{curl}_{{\bf R}_{\vec{0}}}, Y^m)$  and ${\cal H}_{\sharp}(\mathrm{div}_{{\bf R}_0}, Y^m)$ are orthogonal to curls and gradients, respectively. We define the space of curls as
\begin{equation*}
{\cal M}_{\bf R_\sharp}(Y^m, \R^3 ) = \Bigl \{ \vec{w} \in
{L^2_\sharp(Y^m; \R^3 )} \; \mid \;    \vec{w}  (\vec{y}) =  \mathrm{curl}_{\bf R}\; \vec{u}(\vec{y}) \Bigr\}
\end{equation*}
for some vector valued function $\vec{u}$ in $\R^3$ defined on $Y^m$, and the space of ${\bf R}$-gradients
\begin{equation*}
{\cal L}_{\bf R_\sharp}(Y^m, \R^3 ) = \Bigl \{ \vec{w} \in
{L^2_\sharp(Y^m; \R^3 )} \; \mid \;    \vec{w}   (\vec{y}) =
 \mathrm{grad}_{\bf R} \; \phi(\vec{y})
 \Bigr \}
\end{equation*}
for some scalar potential $\phi$ defined on $Y^m$.

\begin{lemma}\label{lem:orthogonality}
We have the following orthogonal decompositions of $L^2_\sharp(Y^m; \R^3 )$
\begin{itemize}
\item[i)] $
L^2_\sharp(Y^m; \R^3 ) =  {\cal H}_{\sharp}(\mathrm{curl}_{{\bf R}_{\vec{0}}}, Y^m) \oplus {\cal M}_{\bf R_\sharp}(Y^m, \R^3 )
$
\item[ii)] $L^2_\sharp(Y^m; \R^3 ) = {\cal H}_{\sharp}(\mathrm{div}_{{\bf R}_0}, Y^m) \oplus
{\cal L}_{\bf R_\sharp}(Y^m, \R^3 ) $
\end{itemize}
\end{lemma}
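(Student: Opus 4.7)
My plan is to establish both decompositions by a mode-by-mode analysis of the Fourier series on $Y^m$. The key observation is that, for each $\vec{k} \in \Z^m$, the vector ${\bf R}^T\vec{k}$ singles out a privileged direction in $\C^3$, and criterion \eqref{eq:criterion} guarantees this direction is nonzero whenever $\vec{k} \neq \vec{0}$. Starting from any $\vec{w} \in L^2_\sharp(Y^m;\R^3)$ with Fourier expansion $\vec{w}(\vec{y}) = \sum_{\vec{k}\in\Z^m} \vec{w}_{\vec{k}} \, e^{2\pi i \vec{k} \cdot \vec{y}}$ (so $\sum_{\vec{k}} |\vec{w}_{\vec{k}}|^2 < \infty$), I would split each coefficient with $\vec{k}\neq\vec{0}$ orthogonally in $\C^3$ as
\[
\vec{w}_{\vec{k}} = \vec{w}_{\vec{k}}^\parallel + \vec{w}_{\vec{k}}^\perp, \qquad \vec{w}_{\vec{k}}^\parallel := \frac{\vec{w}_{\vec{k}} \cdot {\bf R}^T\vec{k}}{|{\bf R}^T\vec{k}|^2}\,{\bf R}^T\vec{k},
\]
i.e.\ into projections onto the one-dimensional span of ${\bf R}^T\vec{k}$ and its two-dimensional orthogonal complement. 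The mean $\vec{w}_{\vec{0}}$ I would assign to the $\mathrm{curl}_{\bf R}$-free summand in (i) and to the $\mathrm{div}_{\bf R}$-free summand in (ii), since constants are annihilated by both operators, while integration by parts on the torus shows any element of ${\cal M}_{\bf R_\sharp}$ or ${\cal L}_{\bf R_\sharp}$ has zero mean.

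For (i), the ``parallel'' partial sum together with the mean belongs to ${\cal H}_{\sharp}(\mathrm{curl}_{{\bf R}_{\vec{0}}}, Y^m)$ because ${\bf R}^T\vec{k} \times \vec{w}_{\vec{k}}^\parallel = \vec{0}$ identically, which is precisely the characterization of $\mathrm{curl}_{\bf R}$-free fields used in the proof of Lemma~\ref{lem:curlfree}. For the ``perpendicular'' part I would exhibit a vector potential by setting
\[
\vec{\psi}_{\vec{k}} := \frac{-1}{2\pi i\,|{\bf R}^T\vec{k}|^2}\,{\bf R}^T\vec{k} \times \vec{w}_{\vec{k}}^\perp,
\]
and verifying, via the BAC-CAB identity together with $\vec{w}_{\vec{k}}^\perp \perp {\bf R}^T\vec{k}$, that $2\pi i\,{\bf R}^T\vec{k} \times \vec{\psi}_{\vec{k}} = \vec{w}_{\vec{k}}^\perp$. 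Decomposition (ii) is entirely symmetric with the roles of ``parallel'' and ``perpendicular'' swapped: the parallel part is recognized as $\mathrm{grad}_{\bf R}\phi$ for the scalar potential with Fourier coefficients $\phi_{\vec{k}} := \frac{\vec{w}_{\vec{k}} \cdot {\bf R}^T\vec{k}}{2\pi i\,|{\bf R}^T\vec{k}|^2}$, while the perpendicular part (carrying the mean) satisfies $\mathrm{div}_{\bf R} = 0$ since ${\bf R}^T\vec{k} \cdot \vec{w}_{\vec{k}}^\perp = 0$ for every $\vec{k}\neq\vec{0}$.

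Orthogonality of the two summands in each decomposition then follows directly from Parseval's identity, since parallel and perpendicular components are orthogonal in $\C^3$ mode by mode, and the mean pairs trivially with any zero-mean summand; the direct-sum property is immediate because a coefficient that is simultaneously parallel and perpendicular to the nonzero vector ${\bf R}^T\vec{k}$ must vanish. The one delicate point I anticipate is that the potentials $\vec{\psi}$ and $\phi$ constructed above need not themselves lie in $L^2_\sharp(Y^m)$, as already noted in the Remark following Lemma~\ref{lem:curlfree}. This poses no genuine obstruction, however, since the $L^2$-boundedness of the relevant images under $\mathrm{curl}_{\bf R}$ and $\mathrm{grad}_{\bf R}$ is automatic: these images coincide Fourier-coefficient-wise with $\vec{w}_{\vec{k}}^\perp$ and $\vec{w}_{\vec{k}}^\parallel$, whose $\ell^2$-norms are controlled by $\|\vec{w}\|_{L^2(Y^m;\R^3)}$.
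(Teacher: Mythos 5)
Your proof is correct and rests on the same idea as the paper's: working Fourier mode by mode and splitting $\C^3$ into the span of ${\bf R}^T\vec{k}$ (which carries the $\mathrm{curl}_{\bf R}$-free, i.e.\ gradient, directions) and its orthogonal complement (which carries the $\mathrm{div}_{\bf R}$-free and $\mathrm{curl}_{\bf R}$-image directions), with criterion \eqref{eq:criterion} guaranteeing ${\bf R}^T\vec{k}\neq\vec{0}$ for $\vec{k}\neq\vec{0}$. The only difference is presentational: you build the two summands explicitly via the projections $\vec{w}_{\vec{k}}^{\parallel}$, $\vec{w}_{\vec{k}}^{\perp}$ and exhibit the potentials, whereas the paper characterizes each subspace as the orthogonal complement of the other; your explicit handling of the zero mode and of the possibly non-$L^2$ potentials is, if anything, slightly more careful than the original.
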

\begin{proof} {\it i)}
For any $\vec{w} \in\
{L^2_\sharp(Y^m; \R^3 )}$  orthogonal to ${\cal H}_{\sharp}(\mathrm{curl}_{{\bf R}_{\vec{0}}}, Y^m)$  we have due to
Lemma~\ref{lem:curlfree}
and Parseval's relation
\begin{equation}\label{eq:i-orthogonal}
\int_{Y^m} \vec{v}(\vec{y}) \cdot \vec{w}(\vec{y}) \; \mathrm{d}\vec{y} =  \int_{Y^m} \mathrm{grad}_{\bf R} \phi (\vec{y}) \cdot  \vec{w}(\vec{y}) =  \sum_{\vec{k}\in\Z^m\setminus\{\vec{0}\}} 2\pi i {\bf R}^T\vec{k}  \phi_{\vec{k}}  \cdot \overline{\vec{w}_{\vec{k}}} = 0
\end{equation}
for all $ \vec{v} \in {\cal H}_{\sharp}(\mathrm{curl}_{{\bf R}_{\vec{0}}}, Y^m)$, where $\overline{\vec{w}}$ denotes the complex conjugate of $\vec{w}$. Since \eqref{eq:i-orthogonal} holds for all $\vec{v} \in {\cal H}_{\sharp}(\mathrm{curl}_{{\bf R}_{\vec{0}}}, Y^m)$ it must also hold term vise for all ${\vec{k}\in\Z^m\setminus\{\vec{0}\}}$. It follows that
$$
2\pi i \left({\bf R}^T\vec{k}\right) \cdot \overline{\vec{w}_{\vec{k}}} = 0
$$
for all ${\vec{k}\in\Z^m\setminus\{\vec{0}\}}$. We conclude that this orthogonality identity is true if and only if ${\bf R}^T\vec{k}$ and  $\overline{\vec{w}_{\vec{k}}}$ are orthogonal, \ie  we can write
$$
\overline{\vec{w}_{\vec{k}}} = \overline{ 2\pi i {\bf R}^T\vec{k} \times \vec{u}_{\vec{k}}}
$$
for some vector $\vec{u}_{\vec{k}}$ in $\C^3$, which defines
$\vec{w} \in {\cal M}_{\bf R_\sharp}(Y^m, \R^3 )$,  where $\vec{w}= \mathrm{curl}_{\bf R} \; \vec{u}$. Now, it remains to prove that  ${\cal H}_{\sharp}(\mathrm{curl}_{{\bf R}_{\vec{0}}}, Y^m)$ is the only space orthogonal to ${\cal M}_{\bf R_\sharp}(Y^m, \R^3 )$.
Assume $\vec{v} \in\
{L^2_\sharp(Y^m; \R^3 )}$  and is orthogonal to  ${\cal M}_{\bf R_\sharp}(Y^m, \R^3 )$, \ie
\begin{equation*}
\int_{Y^m} \vec{v}(\vec{y}) \cdot \vec{w}(\vec{y}) \; \mathrm{d}\vec{y} =  \sum_{\vec{k}\in\Z^m\setminus\{\vec{0}\}}   \vec{v}_{\vec{k}}  \cdot \overline{\vec{w}_{\vec{k}}}=0
\end{equation*}
for all $\vec{w} \in {\cal M}_{\bf R_\sharp}(Y^m, \R^3 )$. This implies
$$
 \vec{v}_{\vec{k}}  \cdot \overline{ 2\pi i {\bf R}^T\vec{k} \times \vec{u}_{\vec{k}}}= 0
$$
for all ${\vec{k}\in\Z^m\setminus\{\vec{0}\}}$. That is, $\vec{v}_{\vec{k}} = \phi_{\vec{k}} {\bf R}^T\vec{k}$  for some complex scalar $\phi_{\vec{k}}$, which defines $\vec{v}$ as a gradient $\mathrm{grad}_{\bf R} \; \phi\in\
{L^2_\sharp(Y^m; \R^3 )}$. We conclude $\vec{v} \in {\cal H}_{\sharp}(\mathrm{curl}_{{\bf R}_{\vec{0}}}, Y^m)$. \\
{\it ii)}
For any fixed $\vec{w} \in
{L^2_\sharp(Y^m; \R^3 )}$ orthogonal to ${\cal H}_{\sharp}(\mathrm{div}_{{\bf R}_0}, Y^m)$ we have by assumption and  Parseval's relation
\begin{equation*}
\begin{aligned}
\int_{Y^m} \vec{u}(\vec{y})  \cdot \vec{w}(\vec{y})  \; \mathrm{d}\vec{y} = \sum_{\vec{k}\in\Z^m\setminus\{\vec{0}\}}  \vec{u}_{\vec{k}} \cdot \overline{\vec{w}_{\vec{k}}}= 0
   \end{aligned}
\end{equation*}
for all $\vec{u} \in {\cal H}_{\sharp}(\mathrm{div}_{{\bf R}_0}, Y^m)$.
This implies that
\begin{equation*}\label{eq:orthogonality_div0}
 \vec{u}_{\vec{k}} \cdot \overline{\vec{w}_{\vec{k}}} =  0
\end{equation*}
for all  $\vec{u} \in {\cal H}_{\sharp}(\mathrm{div}_{{\bf R}_0}, Y^m)$ and all ${\vec{k}\in\Z^m\setminus\{\vec{0}\}}$. We also have
$$
{\bf R}^T\vec{k} \cdot \vec{u}_{\vec{k}} = 0
$$
for each fixed  ${\vec{k}\in\Z^m\setminus\{\vec{0}\}}$ and all  $\vec{u}\in {\cal H}_{\sharp}(\mathrm{div}_{{\bf R}_0}, Y^m)$. This implies that
 $\overline{\vec{w}_{\vec{k}}}$ is parallel with  ${\bf R}^T\vec{k}$. That is
$$
\overline{\vec{w}_{\vec{k}}} = \overline{2\pi i {\bf R}^T\vec{k} \phi_{\vec{k}}}
$$
for some {complex} scalar  $\phi_{\vec{k}}$.
This defines  $\vec{w} \in  {\cal L}_{\bf R_\sharp}(Y^m, \R^3 )$.
Further, let  $\vec{u} \in {L^2_\sharp(Y^m; \R^3 )}$ and assume
\begin{equation*}
\int_{Y^m} \vec{u}(\vec{y})  \cdot \vec{w}(\vec{y})  \; \mathrm{d}\vec{y} = \sum_{\vec{k}\in\Z^m\setminus\{\vec{0}\}}  \vec{u}_{\vec{k}} \cdot \overline{\vec{w}_{\vec{k}}}= 0
\end{equation*}
{for all
$\vec{w} \in  {\cal L}_{\bf R_\sharp}(Y^m, \R^3 )$. That is }
\begin{equation*}
 \vec{u}_{\vec{k}} \cdot \overline{2\pi i {\bf R}^T\vec{k} \phi_{\vec{k}}}=  0
\end{equation*}
{for all ${\vec{k}\in\Z^m\setminus\{\vec{0}\}}$, \ie}
\begin{equation*}
 \left(\vec{u}_{\vec{k}} \cdot \overline{2\pi i {\bf R}^T\vec{k} }\right) \overline{\phi_{\vec{k}}}=  0
\end{equation*}
{for all ${\vec{k}\in\Z^m\setminus\{\vec{0}\}}$ and all $\vec{w} \in  {\cal L}_{\bf R_\sharp}(Y^m, \R^3 )$, {thus} $\vec{u}\in {\cal H}_{\sharp}(\mathrm{div}_{{\bf R}_0}, Y^m)$, which completes the proof.}
\end{proof}

\section{Compactness results}\label{sec:Compactness}
In the following main compactness results we let ${\bf R}$ be a matrix with $m$ rows and $3$ columns ($m>3$)
satisfying \eqref{eq:criterion}  and recall that $\O$ is an open bounded set
of $\R^3$ with a Lipschitz boundary.
We begin by recasting  Proposition \ref{prop:gradtwoscale} into the following familiar form.
\begin{proposition}\label{prop:grad-split}
Let  $\{u_\e\}$ be a uniformly bounded
sequence in $W^{1,2}(\Omega)$.
Then there exist a subsequence $\{u_{\e_k}\}$
and  functions $u\in W^{1,2}(\Omega)$ and $\mathrm{grad}_{\bf R} \;  {u}_1\xy \in L^2(\Omega, L^2_\sharp(Y^m; \R^3 ))$,
such that
\begin{equation}\label{eq:gradsplit_2s_strong}
u_{\e_k} \dto u(\vec{x})
\end{equation}
\begin{equation}
\mathrm{grad}\; u_{\e_k} \wdto \mathrm{grad} \; u (\vec{x}) + \mathrm{grad}_{\bf R} \;  u_1\xy ,  \qquad
{\e_k} \to 0
\end{equation}
\end{proposition}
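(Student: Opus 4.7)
The plan is to deduce Proposition~\ref{prop:grad-split} directly from Proposition~\ref{prop:gradtwoscale} by showing that the oscillating part $\vec{w}$ produced there, which lies in $L^2(\Omega, {\cal L}_{\bf R})$, can be rewritten as a cut-and-projection gradient $\mathrm{grad}_{\bf R} u_1$ of some two-scale corrector $u_1$. The enabling observation is that the Fourier description of ${\cal L}_{\bf R}$ coincides, coefficient by coefficient, with the Fourier description of $\mathrm{grad}_{\bf R}$-fields used in the proof of Lemma~\ref{lem:curlfree}.

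First I would invoke Proposition~\ref{prop:gradtwoscale} on the bounded sequence $\{u_\e\} \subset W^{1,2}(\Omega)$ to extract a subsequence $\{u_{\e_k}\}$ together with $u \in W^{1,2}(\Omega)$ and $\vec{w} \in L^2(\Omega, {\cal L}_{\bf R})$ satisfying $u_{\e_k} \dto u(\vec{x})$ and $\mathrm{grad}\, u_{\e_k} \wdto \mathrm{grad}\, u(\vec{x}) + \vec{w}\xy$. This already delivers \eqref{eq:gradsplit_2s_strong} and reduces the proposition to identifying $\vec{w}$ as $\mathrm{grad}_{\bf R} u_1$ for a suitable $u_1$.

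Next I would unpack the definition of ${\cal L}_{\bf R}$. For a.e.\ $\vec{x} \in \Omega$ there exist complex coefficients $\lambda_{\vec{k}}(\vec{x})$ such that
\[
\vec{w}(\vec{x}, \vec{y}) = \sum_{\vec{k} \in \Z^m \setminus \{\vec{0}\}} \lambda_{\vec{k}}(\vec{x})\, {\bf R}^T \vec{k}\, e^{2\pi i \vec{k} \cdot \vec{y}},
\]
with $\sum_{\vec{k}} |\lambda_{\vec{k}}(\vec{x})|^2 |{\bf R}^T \vec{k}|^2 < \infty$. Setting $(u_1)_{\vec{k}}(\vec{x}) := \lambda_{\vec{k}}(\vec{x})/(2\pi i)$ for $\vec{k} \neq \vec{0}$ defines a formal Fourier series $u_1(\vec{x}, \cdot)$ whose term-by-term cut-and-projection gradient, computed exactly as in the proof of Lemma~\ref{lem:curlfree}, reproduces the Fourier coefficients $\lambda_{\vec{k}}(\vec{x})\,{\bf R}^T\vec{k}$ of $\vec{w}$. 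The summability condition above is exactly Parseval's bound that makes $\mathrm{grad}_{\bf R} u_1(\vec{x}, \cdot)$ belong to $L^2_\sharp(Y^m; \R^3)$, and since the pointwise identity $\mathrm{grad}_{\bf R} u_1 = \vec{w}$ holds a.e.\ in $\vec{x}$, the membership $\mathrm{grad}_{\bf R} u_1 \in L^2(\Omega, L^2_\sharp(Y^m; \R^3))$ is immediate. Measurability in $\vec{x}$ of the $\lambda_{\vec{k}}$ (hence of the $(u_1)_{\vec{k}}$) follows from $\vec{w} \in L^2(\Omega \times Y^m; \R^3)$ and the continuity of Fourier projection.

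The only subtle point, and thus the main obstacle to address explicitly, is that $u_1(\vec{x}, \cdot)$ itself need not lie in $L^2_\sharp(Y^m)$: the weights $|{\bf R}^T \vec{k}|^2$ can be arbitrarily small along sparse sequences of multi-indices by the density phenomenon encoded in the irrationality criterion \eqref{eq:criterion}, so $\sum_{\vec{k}} |\lambda_{\vec{k}}(\vec{x})|^2$ may diverge even when its weighted version is finite. This is exactly the situation flagged in the Remark following Lemma~\ref{lem:curlfree}: $u_1$ is to be read as a formal symbol whose only meaningful manifestation is the well-defined $L^2$ field $\mathrm{grad}_{\bf R} u_1$. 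As the statement of Proposition~\ref{prop:grad-split} refers only to $\mathrm{grad}_{\bf R} u_1$, this causes no issue, and the proof is complete.
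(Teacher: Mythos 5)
Your proof is correct, but it reaches the conclusion by a different route than the paper. You treat Proposition~\ref{prop:gradtwoscale} as a black box and then observe that the space ${\cal L}_{\bf R}$ appearing there is, coefficient by coefficient in Fourier space, exactly the set of fields of the form $\mathrm{grad}_{\bf R}\,u_1$ for a (possibly only formal) potential $u_1$ --- the same computation as in Lemma~\ref{lem:curlfree}, with the correct caveat that $u_1(\vec{x},\cdot)$ itself may fail to lie in $L^2_\sharp(Y^m)$ because $|{\bf R}^T\vec{k}|$ can be arbitrarily small along sparse sequences of multi-indices. The paper instead re-runs the Allaire-type duality argument: it tests $\mathrm{grad}\,u_{\e_k}$ against products $\phi_1(\vec{x})\,\phi_2({\bf R}\vec{x}/\e)$ with $\phi_1\in W_0^{1,2}(\Omega)$ and $\phi_2\in{\cal H}_{\sharp}(\mathrm{div}_{{\bf R}_0},Y^m)$, integrates by parts to show that the two-scale limit minus $\mathrm{grad}\,u$ is orthogonal to all $\mathrm{div}_{\bf R}$-free fields, and then invokes the orthogonal decomposition of Lemma~\ref{lem:orthogonality}, part ii), to identify that orthogonal complement with the space of ${\bf R}$-gradients. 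The two arguments share the same Fourier-analytic core (Lemma~\ref{lem:orthogonality} is itself proved by the Parseval computation you use), so nothing essential is lost; your version is shorter because it delegates the compactness and limit identification entirely to the quoted Proposition~\ref{prop:gradtwoscale}, whereas the paper's version is self-contained relative to Lemma~\ref{lem:orthogonality} and, as a by-product, exhibits the orthogonality of the corrector to $\mathrm{div}_{\bf R}$-free test fields, which is the form in which the result is actually exploited in the homogenization theorems of Section~\ref{sec:Examples}.
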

\begin{proof}
The proof is identical to the one for Proposition \ref{prop:gradtwoscale} given in \cite{Bouchitte+etal2010} or as in the proof of how gradient splits in two-scale convergence,  proved in \cite{Allaire1992}, using Lemma~\ref{lem:orthogonality}.
The limit \eqref{eq:gradsplit_2s_strong} follows at once due to the a priori estimate. Further, we have a two-scale cut-and-projection limit
\begin{equation*}
\mathrm{grad}\; {{u}_\e} \wdto  \vec{\chi}\xy
\end{equation*}
Choosing  test functions $\phi_1 \in  W_0^{1,2}(\Omega)$,
$\phi_2 \in {\cal H}_{\sharp}(\mathrm{div}_{{\bf R}_0}, Y^m) $
yields
\begin{equation*}
        \begin{aligned}
& \lim _{\e\to 0} \IOm \mathrm{grad}\; {u}_\e (\vec{x})  \cdot \phi_1(\vec{x}) \phi_2\left(\frac{{\bf R}\vec{x}}{\e}\right) \,\mathrm{d}\vec{x} = 
- \lim _{\e\to 0} \IOm  {u}_\e (\vec{x}) \; \mathrm{div}\; \phi_1(\vec{x}) \phi_2\left(\frac{{\bf R}\vec{x}}{\e}\right) \,\mathrm{d}\vec{x} = \\ &
- \IOAY   u (\vec{x}) \; \mathrm{div}\;  \phi_1(\vec{x}) \phi_2({\bf y})\,\mathrm{d}\vec{x}\mathrm{d}\vec{y} \; = 
\IOAY \mathrm{grad} \; u (\vec{x}) \cdot \phi_1(\vec{x}) \phi_2({\bf y})\,\mathrm{d}\vec{x}\mathrm{d}\vec{y} \;
   \end{aligned}
\end{equation*}
\noindent
Now,  $\vec{w} = \vec{\chi}\xy  - \mathrm{grad}\; {u}(\vec{x})$ belongs to the space orthogonal to ${\cal H}_{\bf R_\sharp}(\mathrm{div}_0, Y^m)$. Lemma~\ref{lem:orthogonality} gives
\begin{equation*}
\vec{\chi}\xy  - \mathrm{grad}\; {u}(\vec{x}) = \mathrm{grad}_{\bf R} \;  u_1\xy
\end{equation*}
for some gradient $\mathrm{grad}_{\bf R} \;   {u}_1\xy \in L^2(\Omega, L^2_\sharp(Y^m; \R^3 ))$.
\end{proof}
For the counterpart  of the two-scale limit of curls we need the following lemma stating a differential identity.
\begin{lemma}\label{lem:curlgrad} Any  $\phi \in {\cal H}(\mathrm{grad},\Omega) \cap  {\cal H}(\mathrm{grad}_{\bf R},Y^m)$ satisfies
\begin{equation}\label{eq:curlgrad}
\mathrm{curl}_{\vec{x}} \mathrm{grad}_{\bf R} \;  \phi\xy =
- \mathrm{curl}_{\bf R} \mathrm{grad}_{\vec{x}} \;  \phi\xy
\end{equation}
\end{lemma}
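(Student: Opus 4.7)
The plan is to verify \eqref{eq:curlgrad} by a direct component-wise computation using Definition~\ref{def:R-differetnialoperators} (together with the Levi-Civita form of $\mathrm{curl}_{\bf R}$ spelled out in Remark~\ref{rem:divR_and_curlR}), the equality of mixed partial derivatives in $\vec{x}$ and $\vec{y}$, and the antisymmetry of the Levi-Civita symbol $\epsilon_{ijk}$. The identity is, at heart, the familiar fact that $\mathrm{curl}\,\mathrm{grad}=\vec{0}$ for smooth functions in a single variable, here refracted through the fact that $\mathrm{grad}_{\bf R}$ differentiates in $\vec{y}$ while $\mathrm{curl}_{\vec{x}}$ differentiates in $\vec{x}$ (and vice versa on the other side), so that each mixed derivative is paired with the Levi-Civita symbol on \emph{different} slots.

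First I would unpack the left-hand side. Since $(\mathrm{grad}_{\bf R}\phi)_k = \sum_l R_{lk}\,\partial_{y_l}\phi$, applying the usual curl in $\vec{x}$ gives, for $i=1,2,3$,
\begin{equation*}
(\mathrm{curl}_{\vec{x}}\,\mathrm{grad}_{\bf R}\phi)_i \;=\; \epsilon_{ijk}\,\partial_{x_j}(\mathrm{grad}_{\bf R}\phi)_k \;=\; \sum_l R_{lk}\,\epsilon_{ijk}\,\partial_{x_j}\partial_{y_l}\phi .
\end{equation*}
Next I would expand the right-hand side in the same way: $(\mathrm{grad}_{\vec{x}}\phi)_k = \partial_{x_k}\phi$ together with the Levi-Civita expression of $\mathrm{curl}_{\bf R}$ gives
\begin{equation*}
(\mathrm{curl}_{\bf R}\,\mathrm{grad}_{\vec{x}}\phi)_i \;=\; \epsilon_{ijk}\sum_l R_{lj}\,\partial_{y_l}\partial_{x_k}\phi .
\end{equation*}

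Third, I would exchange $\partial_{x_k}$ with $\partial_{y_l}$ in the second expression and relabel the dummy indices $j\leftrightarrow k$. Using $\epsilon_{ikj}=-\epsilon_{ijk}$, this transforms $(\mathrm{curl}_{\bf R}\,\mathrm{grad}_{\vec{x}}\phi)_i$ into $-\sum_l R_{lk}\,\epsilon_{ijk}\,\partial_{x_j}\partial_{y_l}\phi$, which is exactly the negative of $(\mathrm{curl}_{\vec{x}}\,\mathrm{grad}_{\bf R}\phi)_i$. The identity \eqref{eq:curlgrad} then holds componentwise, hence as vector fields.

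The only mild obstacle is justifying the commutation of $\partial_{x_j}$ with $\partial_{y_l}$: $\phi$ is only assumed to possess first-order weak derivatives in each variable, so the middle manipulation is to be read in the distributional sense on $\Omega\times Y^m$, where mixed partials always commute. By a density argument (approximating $\phi$ by smooth functions in the relevant norms) one can alternatively first establish \eqref{eq:curlgrad} for smooth $\phi$ and then pass to the limit, both sides being continuous in $\phi$ for the topology inherited from ${\cal H}(\mathrm{grad},\Omega) \cap {\cal H}(\mathrm{grad}_{\bf R},Y^m)$. Either way, the remainder of the argument is purely symbolic.
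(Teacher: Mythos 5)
Your computation is correct and the identity does hold, but you reach it by a different route than the paper. You expand both sides componentwise with the Levi-Civita symbol, commute the mixed partials $\partial_{x_j}\partial_{y_l}$, and relabel dummy indices to pick up the sign from $\epsilon_{ikj}=-\epsilon_{ijk}$; the paper instead expands $\phi$ in a double Fourier series (Fourier transform in $\vec{x}$ if $\Omega$ is unbounded), under which both sides of \eqref{eq:curlgrad} become, mode by mode, the elementary antisymmetry $(2\pi i)^2\,\vec{\xi}\times{\bf R}^T\vec{k}\,\phi_{\vec{\xi}\vec{k}} = -(2\pi i)^2\,{\bf R}^T\vec{k}\times\vec{\xi}\,\phi_{\vec{\xi}\vec{k}}$ of the cross product. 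The two arguments hinge on the same algebraic fact, but they pay for it differently: the Fourier route gets the commutation of the two derivatives for free (each mode is a product of exponentials) at the cost of invoking completeness of the trigonometric system and, strictly, some care about in which space the doubly differentiated series converges; your direct route is more elementary and transparent but must justify the interchange $\partial_{x_j}\partial_{y_l}=\partial_{y_l}\partial_{x_j}$ for a $\phi$ that a priori has only first-order derivatives in each variable separately — which you correctly flag and resolve by reading the identity distributionally on $\Omega\times Y^m$ (or by density of smooth functions and continuity of both sides). Either reading is consistent with how the lemma is actually used in Proposition~\ref{prop:curl_split}, where \eqref{eq:curlgrad} is only ever tested against smooth test functions inside an integral, so the weak interpretation suffices.
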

\begin{proof} It is easily verified that the following analogous identity holds for the standard differential operators
\begin{equation*}
\mathrm{curl}_{\vec{x}} \mathrm{grad}_{\vec{y}} \;  \phi\xy =
-  \mathrm{curl}_{\vec{y}} \mathrm{grad}_{\vec{x}} \;  \phi\xy
\end{equation*}
for all sufficiently differentiable functions $\phi$.
Let  $\phi \in {\cal H}(\mathrm{grad},\Omega) \cap  {\cal H}(\mathrm{grad}_{\bf R},Y^m)$. Assume $\Omega$ is bounded. The Fourier series of $
\mathrm{curl}_{\vec{x}} \mathrm{grad}_{\bf R} \;  \phi\xy$ gives
\begin{equation*}
 2\pi i  2\pi i \vec{\xi} \times {\bf R}^T\vec{k}  \phi_{\vec{\xi}\vec{k}}  =
-  2\pi i  2\pi i {\bf R}^T\vec{k} \times  \vec{\xi} \phi_{\vec{\xi}\vec{k}}
\end{equation*}
which obviously holds
for all  $\vec{\xi} \in\Z^3 $,  ${\vec{k}\in\Z^m\setminus\{\vec{0}\}}$.
For $\Omega$ unbounded we use the Fourier transform and $\hat{\phi}_{\vec{k}}(\vec{\xi})$ instead of $\phi_{\vec{\xi}\vec{k}}$. This proves  \eqref{eq:curlgrad}.
\end{proof}
\begin{proposition}\label{prop:curl_split}
Let  $\{\vec{u}_\e\}$ be a uniformly bounded
sequence in $H(\mathrm{curl},\Omega)$.
Then there exist a subsequence
 $\{\vec{u}_{\e_k}\}$
and  functions
{$\vec{u}_0\in {\cal H}(\mathrm{curl},\Omega,{\cal H}_{\sharp}(\mathrm{curl}_{{\bf R}_0},Y^m ))$,}\\
$\mathrm{grad}_{\bf R} \;  \phi   \in  L^2(\O,L^2_\sharp(Y^m; \R^3))$,
and
$\mathrm{curl}_{\bf R} \;  \vec{u}_1 \in L^2(\O,L^2_\sharp(Y^m; \R^3))$
such that
\begin{equation}\label{eq:curlsplit_2s_weak}
\vec{u}_{\e_k} \wdto \vec{u}_0(\vec{x},\vec{y}) =  \vec{u}(\vec{x}) +  \mathrm{grad}_{\bf R} \;  \phi\xy
\end{equation}
\begin{equation}
\mathrm{curl}\; \vec{u}_{\e_k} \wdto \mathrm{curl} \; \vec{u} (\vec{x}) + \mathrm{curl}_{\bf R} \;  \vec{u}_1\xy
\end{equation}
as ${\e_k} \to 0$,
where
\begin{equation*}
\vec{u}(\vec{x}) = \int_{Y^m} \vec{u}_0(\vec{x},\vec{y}) \; \mathrm{d}\vec{y}
\end{equation*}
\end{proposition}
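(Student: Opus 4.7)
The plan is to parallel the proof of Proposition~\ref{prop:grad-split}, with the orthogonal decomposition of Lemma~\ref{lem:orthogonality}~i) replacing that of ii) and Lemma~\ref{lem:curlfree} rewriting every $\mathrm{curl}_{\bf R}$-free field as $\mathrm{grad}_{\bf R}$ of a potential. Since $\{\vec{u}_\e\}$ and $\{\mathrm{curl}\,\vec{u}_\e\}$ are both bounded in $L^2(\Omega;\R^3)$, Proposition~\ref{prop:weaktwoscaleprop} extracts a common subsequence with two-scale cut-and-projection limits $\vec{u}_{\e_k}\wdto\vec{u}_0$ and $\mathrm{curl}\,\vec{u}_{\e_k}\wdto\vec{\chi}$ in $L^2(\Omega\times Y^m;\R^3)$. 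What remains is to identify the $\vec{y}$-structure of $\vec{u}_0$ and of $\vec{\chi}$.

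To identify $\vec{u}_0$, I would test against the vanishing oscillatory field $\vec{\psi}_\e(\vec{x})=\e\,\phi_1(\vec{x})\vec{\phi}_2({\bf R}\vec{x}/\e)$ with $\phi_1\in{\cal D}(\Omega)$ and $\vec{\phi}_2\in C^\infty_\sharp(Y^m;\R^3)$, and integrate by parts: $\int_\Omega\mathrm{curl}\,\vec{u}_\e\cdot\vec{\psi}_\e\,d\vec{x}=\int_\Omega\vec{u}_\e\cdot\mathrm{curl}\,\vec{\psi}_\e\,d\vec{x}$. Relation~\eqref{eq:projected_partial_derivative} yields $\mathrm{curl}\,\vec{\psi}_\e=\e\,\mathrm{grad}\,\phi_1\times\vec{\phi}_2+\phi_1\,\mathrm{curl}_{\bf R}\vec{\phi}_2({\bf R}\vec{x}/\e)$. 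The $O(\e)$ terms vanish in the limit and one obtains $\IOAY\vec{u}_0\cdot\phi_1(\vec{x})\,\mathrm{curl}_{\bf R}\vec{\phi}_2(\vec{y})\,d\vec{x}d\vec{y}=0$ for every admissible pair. Arbitrariness of $\phi_1$ and $\vec{\phi}_2$ places $\vec{u}_0(\vec{x},\cdot)$ in the orthogonal complement of ${\cal M}_{\bf R_\sharp}(Y^m,\R^3)$ for a.e.~$\vec{x}$; Lemma~\ref{lem:orthogonality}~i) then locates $\vec{u}_0(\vec{x},\cdot)$ in ${\cal H}_\sharp(\mathrm{curl}_{{\bf R}_{\vec{0}}},Y^m)$, and applying Lemma~\ref{lem:curlfree} to its zero-mean part delivers $\vec{u}_0(\vec{x},\vec{y})=\vec{u}(\vec{x})+\mathrm{grad}_{\bf R}\phi(\vec{x},\vec{y})$ with $\vec{u}(\vec{x})=\int_{Y^m}\vec{u}_0\,d\vec{y}$.

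For the curl limit, I would next use the test field $\vec{\psi}_\e(\vec{x})=\phi_1(\vec{x})\vec{\phi}_2({\bf R}\vec{x}/\e)$ with $\vec{\phi}_2\in{\cal H}_\sharp(\mathrm{curl}_{{\bf R}_{\vec{0}}},Y^m)\cap C^\infty_\sharp$, decomposed as a constant $\vec{c}$ plus $\mathrm{grad}_{\bf R}\tilde\phi$. The curl-free property of $\vec{\phi}_2$ cancels the $\e^{-1}$ singular piece in $\mathrm{curl}\,\vec{\psi}_\e$, so $\mathrm{curl}\,\vec{\psi}_\e=\mathrm{grad}\,\phi_1\times\vec{\phi}_2({\bf R}\vec{x}/\e)$, and passing to the two-scale limit yields $\IOAY\vec{\chi}\cdot\phi_1\vec{\phi}_2\,d\vec{x}d\vec{y}=\IOAY\vec{u}_0\cdot(\mathrm{grad}\,\phi_1\times\vec{\phi}_2)\,d\vec{x}d\vec{y}$. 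Inserting $\vec{u}_0=\vec{u}+\mathrm{grad}_{\bf R}\phi$, the constant part of $\vec{\phi}_2$ contributes $\int_\Omega\mathrm{curl}\,\vec{u}\cdot\phi_1\vec{c}\,d\vec{x}$ after standard integration by parts in $\vec{x}$, which matches exactly the contribution of $\mathrm{curl}\,\vec{u}$ to the target. The main obstacle is the zero-mean cross-term $\IOAY\mathrm{grad}_{\bf R}\phi\cdot(\mathrm{grad}\,\phi_1\times\mathrm{grad}_{\bf R}\tilde\phi)\,d\vec{x}d\vec{y}$: rewriting the scalar triple product with the Levi--Civita symbol gives $\epsilon_{ijk}(\partial_j\phi_1)\int_{Y^m}(\mathrm{grad}_{\bf R}\phi)_i(\mathrm{grad}_{\bf R}\tilde\phi)_k\,d\vec{y}$, and expanding each $(\mathrm{grad}_{\bf R})_i$ via~\eqref{eq:projected_partial_derivative}, two successive $\vec{y}$-integrations by parts combined with commutativity of mixed second derivatives show the $\vec{y}$-integral is symmetric in $(i,k)$, hence contracts to zero against the antisymmetric $\epsilon_{ijk}$. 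Alternatively, Lemma~\ref{lem:curlgrad} can be invoked to move $\mathrm{curl}$ between $\vec{x}$ and $\vec{y}$ before periodic integration by parts. This delivers $(\vec{\chi}-\mathrm{curl}\,\vec{u})(\vec{x},\cdot)\perp{\cal H}_\sharp(\mathrm{curl}_{{\bf R}_{\vec{0}}},Y^m)$ for a.e.~$\vec{x}$, and a final application of Lemma~\ref{lem:orthogonality}~i) produces $\vec{u}_1\in L^2(\Omega;L^2_\sharp(Y^m;\R^3))$ with $\vec{\chi}=\mathrm{curl}\,\vec{u}+\mathrm{curl}_{\bf R}\vec{u}_1$, which completes the argument.
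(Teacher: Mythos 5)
Your argument is correct and follows essentially the same route as the paper: identify $\vec{u}_0$ as $\mathrm{curl}_{\bf R}$-free in $\vec{y}$ via the vanishing test field $\e\,\phi_1\vec{\phi}_2$, apply Lemma~\ref{lem:curlfree} and Lemma~\ref{lem:orthogonality}~i), then test $\mathrm{curl}\,\vec{u}_\e$ against $\phi_1\vec{\phi}_2$ with $\vec{\phi}_2$ chosen $\mathrm{curl}_{\bf R}$-free. The only difference is cosmetic: where the paper kills the cross-term by combining Lemma~\ref{lem:curlgrad} with Stokes' theorem (Lemma~\ref{Integration_by_parts_Stokes}), you use an equivalent symmetric-versus-antisymmetric contraction argument, and you note the paper's route as an alternative anyway.
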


\begin{proof}
The proof follows the line of the proof of Proposition~7 in \cite{Wellander2009}, or by using Lemma~\ref{lem:orthogonality} similarly as in Proposition~\ref{prop:grad-split}, as follows.
The limit \eqref{eq:curlsplit_2s_weak} follows  due to the a priori estimate and standard arguments, using admissible test functions $ \vec{\phi}(\vec{x},\vec{y})$ and  $\eta \vec{\phi}(\vec{x},\vec{y})$ for the sequences $\{ \vec{u}_\e \}$ and $\{ \mathrm{curl}\; \vec{u}_\e \}$, respectively.
Further, we have a two-scale limit
\begin{equation*}
\mathrm{curl}\; \vec{u}_\e \wdto  \vec{\chi}\xy
\end{equation*}
Using test functions in ${\cal H}_0(\mathrm{curl},\Omega)$ gives
\begin{equation}\label{eq:weak_curl_limit}
\mathrm{curl}\; \vec{u}_\e \wdto \mathrm{curl}\; \vec{u}
 \end{equation}
Next we choose test functions $\phi_1 \in  W^{1,2}(\Omega)$,
$\vec{\phi}_2 \in{\cal H}_{\sharp}(\mathrm{curl}_{\bf R_0}, Y^m)$ which by using \eqref{eq:curlsplit_2s_weak} and  \eqref{eq:weak_curl_limit} gives
\begin{equation}
\label{eq:toto1}
\begin{aligned}
&
\lim _{\e\to 0} \IOm \mathrm{curl}\; \vec{u}_\e  (\vec{x})  \cdot \phi_1(\vec{x}) \vec{\phi}_2\left(\frac{ {\bf R}\vec{x}}{\e}\right) \,\mathrm{d}\vec{x} =
\\&
\lim _{\e\to 0} \IOm  \vec{u}_\e  (\vec{x})  \cdot \mathrm{curl}\; \left(\phi_1(\vec{x}) \vec{\phi}_2\left(\frac{ {\bf R}\vec{x}}{\e}\right) \right) \,\mathrm{d}\vec{x} =\\
 &\IOAY \left( \vec{u} (\vec{x}) + \mathrm{grad}_{\bf R} \;  \phi\xy \right) \cdot \mathrm{curl}\; \left(\phi_1(\vec{x}) \vec{\phi}_2({\bf y})\right) \,\mathrm{d}\vec{x}\mathrm{d}\vec{y}=\\
 &\IOAY \left(\mathrm{curl} \; \vec{u}  (\vec{x}) +  \mathrm{curl}_{\vec{x}} \;\mathrm{grad}_{\bf R} \;  \phi\xy\right)  \cdot \phi_1(\vec{x}) \vec{\phi}_2({\bf y})\,\mathrm{d}\vec{x}\mathrm{d}\vec{y}
\end{aligned}
\end{equation}
 Lemmas~\ref{lem:curlgrad}, \ref{Integration_by_parts_Stokes} (see Appendix) and the choice of test functions   yields
\begin{equation}
\label{eq:toto2}
\begin{aligned}
& \IOAY  \mathrm{curl}_{\vec{x}} \;\  \mathrm{grad}_{\bf R} \;  \phi\xy   \cdot \phi_1(\vec{x}) \vec{\phi}_2({\bf y})\,\mathrm{d}\vec{x}\mathrm{d}\vec{y} = \\
& - \IOAY   \mathrm{curl}_{\bf R} \; \mathrm{grad}_{\vec{x}} \;  \phi\xy   \cdot \phi_1(\vec{x}) \vec{\phi}_2({\bf y})\,\mathrm{d}\vec{x}\mathrm{d}\vec{y} = \\
& - \IOAY  \mathrm{grad}_{\vec{x}}\;  \phi\xy   \cdot \phi_1(\vec{x}) \mathrm{curl}_{\bf R} \; \vec{\phi}_2({\bf y})\,\mathrm{d}\vec{x}\mathrm{d}\vec{y} = 0
\end{aligned}
\end{equation}
Combining (\ref{eq:toto1}) and (\ref{eq:toto2}),  lemma~\ref{lem:orthogonality} yields
\begin{equation*}
 \vec{\chi}\xy  - \mathrm{curl}\; \vec{u}(\vec{x}) = \mathrm{curl}_{\bf R} \;  \vec{u}_1\xy
\end{equation*}
for some function
$\mathrm{curl}_{\bf R} \;  \vec{u}_1 \in L^2(\O,L^2_\sharp(Y^m; \R^3))$.
\end{proof}

\begin{remark}
Inspired by \cite{Cherednichenko+Cooper2016,Kamotski+Smyshlyaev2013} we  decompose  $W^{1,2}_\sharp(Y^m)$ into two orthogonal spaces,
\begin{equation*}
W^{1,2}_\sharp(Y^m) = X \oplus  X^\perp
\end{equation*}
where
\begin{equation}\label{eq:W12_composition}
    X = \left\{ u \in W^{1,2}_\sharp(Y^m) | {\bf R}{\bf R}^T \; \nabla_y u = 0 \right\}
\end{equation}
and
\begin{equation}\label{eq:X-proj_space}
X^\perp = \left\{ u \in W^{1,2}_\sharp(Y^m) | \left({\bf I}_m - {\bf R}{\bf R}^T\right) \; \nabla_y u = 0 \right\}
\end{equation}
 The columns of ${\bf R}$ are eigenvectors to ${\bf R}{\bf R}^T$, with eigenvalues $\{\lambda_1, \dots, \lambda_n\}$ that all take the value one. The vector space in the kernel of ${\bf R}{\bf R}^T$ is orthogonal to these eigenenvectors.   Matrix $\left({\bf I}_m - {\bf R}{\bf R}^T\right)$ projects vectors to the subspace of $Y^m$  orthogonal to the hyperplane defined by ${\bf R}$, which characterizes completely the degeneracy of ${\bf R}{\bf R}^T$.
\end{remark}

\begin{lemma}\label{lem:R-poisson_eqn}
 The   equation
\begin{equation}\label{eq:projected_Poisson}
 - \mathrm{div}_{\bf R} \;  \mathrm{grad}_{\bf R}\;{\theta}\xy  = f\xy
\end{equation}
has a unique solution in $X^\perp$.
\end{lemma}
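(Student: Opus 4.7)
The plan is to apply the Lax--Milgram theorem to the symmetric bilinear form
\begin{equation*}
a(\theta,\phi) \; := \; \int_{Y^m} \mathrm{grad}_{\bf R}\,\theta(\vec{x},\vec{y}) \cdot \mathrm{grad}_{\bf R}\,\phi(\vec{x},\vec{y})\, \mathrm{d}\vec{y}
\end{equation*}
on the closed subspace $V:=\{u\in X^\perp : [u(\vec{x},\cdot)]=0\}$ of $W^{1,2}_\sharp(Y^m)$, with $\vec{x}\in\Omega$ playing the role of a parameter. The right-hand side is $L(\phi):=\int_{Y^m} f(\vec{x},\vec{y})\,\phi(\vec{y})\,\mathrm{d}\vec{y}$, and the natural compatibility condition $[f(\vec{x},\cdot)]=0$ (obtained by pairing $-\mathrm{div}_{\bf R}\,\mathrm{grad}_{\bf R}\,\theta$ with the constant test function and integrating by parts on $Y^m$) guarantees that $L$ is a bounded linear functional on $V$.

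The heart of the argument is the algebraic identity
\begin{equation*}
|\mathrm{grad}_{\bf R}\,\theta|^2 \; = \; (\nabla_y \theta)^T{\bf R}{\bf R}^T(\nabla_y \theta) \; = \; |\nabla_y \theta|^2, \qquad \theta \in X^\perp.
\end{equation*}
The first equality is Definition~\ref{def:R-differetnialoperators}. For the second, the relation $({\bf I}_m - {\bf R}{\bf R}^T)\nabla_y\theta = 0$ defining $X^\perp$ gives $\nabla_y\theta = {\bf R}{\bf R}^T\nabla_y\theta$, so by symmetry of ${\bf R}{\bf R}^T$ the quadratic form collapses to $(\nabla_y\theta)^T(\nabla_y\theta)$; geometrically, by the preceding Remark, ${\bf R}{\bf R}^T$ is the orthogonal projection onto the column space of ${\bf R}$, and $X^\perp$ is exactly the set of functions whose $\nabla_y$ lies in this column space. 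Combined with the periodic Poincaré--Wirtinger inequality on $Y^m$ for zero-mean functions, this yields the coercivity
\begin{equation*}
a(\theta,\theta) \; = \; \|\nabla_y \theta\|_{L^2(Y^m)}^2 \; \geq \; C\,\|\theta\|_{W^{1,2}_\sharp(Y^m)}^2, \qquad \theta \in V,
\end{equation*}
while continuity of $a$ on $V\times V$ is immediate from Cauchy--Schwarz. Lax--Milgram then produces a unique $\theta\in V$ with $a(\theta,\phi)=L(\phi)$ for every $\phi\in V$, which is the weak formulation of \eqref{eq:projected_Poisson}.

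Uniqueness in $X^\perp$ modulo constants follows at once: if $\theta_1,\theta_2\in X^\perp$ both solve \eqref{eq:projected_Poisson}, their difference $w$ satisfies $\mathrm{div}_{\bf R}\,\mathrm{grad}_{\bf R}\,w=0$; testing against $w$ and integrating by parts on $Y^m$ gives $\|\mathrm{grad}_{\bf R}\,w\|_{L^2}^2=0$, and the identity above then forces $\|\nabla_y w\|_{L^2}=0$, i.e.\ $w$ is a constant, which is annihilated by the zero-mean normalization that selects the canonical representative in $X^\perp$.

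I expect the main obstacle to be the genuine degeneracy of $\mathrm{div}_{\bf R}\,\mathrm{grad}_{\bf R}$ on the full $W^{1,2}_\sharp(Y^m)$: its Fourier symbol $-4\pi^2\,\vec{k}^T{\bf R}{\bf R}^T\vec{k}$ vanishes on the kernel of ${\bf R}{\bf R}^T$, so without first restricting to $X^\perp$ there is no hope of ellipticity and the equation admits a large affine family of solutions differing by elements of $X$. The splitting $W^{1,2}_\sharp(Y^m)=X\oplus X^\perp$ from the preceding Remark is tailor-made to remove this kernel, and the identity $|\mathrm{grad}_{\bf R}\,\theta|=|\nabla_y\theta|$ on $X^\perp$ is precisely the mechanism that converts the restricted operator into a uniformly elliptic one, at which point Lax--Milgram closes the loop.
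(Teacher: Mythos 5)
Your proof is correct and follows essentially the same route as the paper: the paper's entire argument is to rewrite the operator as $-\mathrm{div}_{\vec{y}}\,{\bf R}{\bf R}^T\nabla_{\vec{y}}$ and observe that restricting to $X^\perp$ makes ${\bf R}{\bf R}^T$ coercive, which is exactly your identity $|\mathrm{grad}_{\bf R}\theta|=|\nabla_{\vec{y}}\theta|$ on $X^\perp$ followed by Lax--Milgram. You additionally make explicit several points the paper's one-line proof suppresses --- the compatibility condition $[f(\vec{x},\cdot)]=0$ (whose real role is solvability when testing against constants, not boundedness of $L$), the Poincar\'e--Wirtinger step, and the zero-mean normalization needed because constants lie in $X^\perp$ and in the kernel --- all of which are sound and sharpen the statement rather than change the argument.
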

\begin{proof}
 Due to Definition~\ref{def:R-differetnialoperators} and Remark~\ref{rem:divR_and_curlR} equation \eqref{eq:projected_Poisson} can be written as
  \begin{equation}\label{eq:projected_Poisson_2}
 - \mathrm{div}_{\vec{y}} \; {\bf R}{\bf R}^T   \mathrm{grad}_{\vec{y}}\; {\theta}\xy  = f\xy
\end{equation}
 Excluding all potentials with gradient components in $\mathrm{Ker }\; {\bf R}{\bf R}^T$, which is the same as looking for solutions with gradients in  $\mathrm{Ker }\; \left({\bf I}_m - {\bf R}{\bf R}^T\right)$, makes $ {\bf R}{\bf R}^T $ coercive on $X^\perp$ and the  problem well posed.
\end{proof}

\begin{remark}
Equation \eqref{eq:projected_Poisson} corresponds to solving Poisson's equation in the projected hyperplane. This  restricts  the solutions to a subspace of $W^{1,2}_\sharp(Y^m)$  as in \cite{Cherednichenko+Cooper2016,Kamotski+Smyshlyaev2013}.  Another option, and maybe from a numerical point of view simpler way to handle the degenerated elliptic equation is similar to what was done in \cite{Blanc+etal2015}. Regularize the equation by  adding  a term, e.g.   $(- \alpha \; \mathrm{div}_{\vec{y}}\; \left({\bf I}_m - {\bf R}{\bf R}^T\right) \; \nabla_{\vec{y}} u)$, $\alpha >0$.
The physical interpretation of this regularization is that we allow transport perpendicular to the hyperplane on which ${\bf R}^T$
projects in the higher dimensional space. This corresponds to shortcut   points in real space, $\R^n$, which can be arbitrarily far from each other but   arbitrarily close in $\R^m$.
\end{remark}

\begin{proposition}\label{prop:div_split}
Let  $\{\vec{u}_\e\}$ be a uniformly bounded
sequence in $H(\mathrm{div},\Omega)$.
Then there exist a subsequence
 $\{\vec{u}_{\e_k}\}$ 
and functions
{$\vec{u}_0\in {\cal H}(\mathrm{div},\Omega,{\cal H}_{\sharp}(\mathrm{div}_{{\bf R}_0},Y^m ))$}
and
{$\mathrm{div}_{\bf R} \; \vec{u}_1 \in L^2(\O,L^2_\sharp(Y^m))$}
such that
\begin{equation}\label{eq:divsplit_2s_weak}
\vec{u}_{\e_k} \wdto \vec{u}_0(\vec{x},\vec{y})
\end{equation}
\begin{equation}\label{eq:divsplit_2s}
\mathrm{div}\; \vec{u}_{\e_k} \wdto  \mathrm{div} \; \vec{u}(\vec{x}) + \mathrm{div}_{\bf R} \; \vec{u}_1\xy
\end{equation}
as ${\e_k} \to 0$, where
\begin{equation*}
u(\vec{x}) = \int_{Y^m}  u_0(\vec{x},\vec{y}) \; \mathrm{d}\vec{y}
\end{equation*}
\end{proposition}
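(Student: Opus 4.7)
The plan is to follow the strategy of Propositions~\ref{prop:grad-split} and~\ref{prop:curl_split}: extract two-scale limits by weak compactness, identify the microscopic structure of $\vec{u}_0$ by an orthogonality argument based on Lemma~\ref{lem:orthogonality}(ii), and then build $\vec{u}_1$ by a direct Fourier inversion on the scalar two-scale limit of $\mathrm{div}\,\vec{u}_\e$. Since $\{\vec{u}_\e\}$ is uniformly bounded in $L^2(\Omega;\R^3)$ and $\{\mathrm{div}\,\vec{u}_\e\}$ is uniformly bounded in $L^2(\Omega)$, Proposition~\ref{prop:weaktwoscaleprop} delivers a subsequence (still denoted $\e_k$) and limits $\vec{u}_0\in L^2(\Omega\times Y^m;\R^3)$ and $\chi\in L^2(\Omega\times Y^m)$ with $\vec{u}_{\e_k}\wdto\vec{u}_0$ and $\mathrm{div}\,\vec{u}_{\e_k}\wdto\chi$. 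The standard weak-$L^2$ corollary of two-scale convergence gives $\vec{u}_{\e_k}\wto\vec{u}(\vec{x}):=\int_{Y^m}\vec{u}_0\,\mathrm{d}\vec{y}$ and $\mathrm{div}\,\vec{u}_{\e_k}\wto\int_{Y^m}\chi\,\mathrm{d}\vec{y}$; passing to the limit in $\int_\Omega\mathrm{div}\,\vec{u}_\e\,\phi_1\,\mathrm{d}\vec{x}=-\int_\Omega\vec{u}_\e\cdot\nabla\phi_1\,\mathrm{d}\vec{x}$ for $\phi_1\in{\cal D}(\Omega)$ identifies $\int_{Y^m}\chi\,\mathrm{d}\vec{y}=\mathrm{div}\,\vec{u}$ and places $\vec{u}\in{\cal H}(\mathrm{div},\Omega)$.

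To show that $\vec{u}_0(\vec{x},\cdot)\in{\cal H}_\sharp(\mathrm{div}_{{\bf R}_0},Y^m)$ for a.e.\ $\vec{x}$, I test the same distributional identity with the $\e$-scaled oscillating function $\psi_\e(\vec{x})=\e\,\phi_1(\vec{x})\phi_2({\bf R}\vec{x}/\e)$, with $\phi_1\in{\cal D}(\Omega)$ and $\phi_2\in C^\infty_\sharp(Y^m)$. The chain rule yields $\nabla\psi_\e=\e(\nabla\phi_1)\phi_2+\phi_1{\bf R}^T\nabla_{\vec{y}}\phi_2$, and the factor $\e$ annihilates both the left-hand side and the first term on the right-hand side as $\e\to 0$. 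Passing to the two-scale limit leaves
\begin{equation*}
\int_{\Omega\times Y^m}\vec{u}_0(\vec{x},\vec{y})\cdot\phi_1(\vec{x})\,\mathrm{grad}_{\bf R}\phi_2(\vec{y})\,\mathrm{d}\vec{x}\mathrm{d}\vec{y}=0,
\end{equation*}
so by arbitrariness of $\phi_1$, $\vec{u}_0(\vec{x},\cdot)$ is $L^2_\sharp(Y^m;\R^3)$-orthogonal to every element of ${\cal L}_{{\bf R}_\sharp}(Y^m,\R^3)$, which by Lemma~\ref{lem:orthogonality}(ii) is equivalent to $\vec{u}_0(\vec{x},\cdot)\in{\cal H}_\sharp(\mathrm{div}_{{\bf R}_0},Y^m)$.

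It remains to produce $\vec{u}_1$. Since $\int_{Y^m}\chi\,\mathrm{d}\vec{y}=\mathrm{div}\,\vec{u}$, the remainder $g(\vec{x},\vec{y}):=\chi(\vec{x},\vec{y})-\mathrm{div}\,\vec{u}(\vec{x})$ has vanishing $\vec{y}$-mean, and therefore admits a Fourier expansion $g(\vec{x},\vec{y})=\sum_{\vec{k}\in\Z^m\setminus\{\vec{0}\}}g_{\vec{k}}(\vec{x})\,e^{2\pi i\vec{k}\cdot\vec{y}}$. Using that criterion~\eqref{eq:criterion} guarantees $|{\bf R}^T\vec{k}|>0$ for every $\vec{k}\neq\vec{0}$, I set
\begin{equation*}
\vec{u}_1(\vec{x},\vec{y})=\sum_{\vec{k}\in\Z^m\setminus\{\vec{0}\}}\frac{g_{\vec{k}}(\vec{x})}{2\pi i\,|{\bf R}^T\vec{k}|^2}\,{\bf R}^T\vec{k}\,e^{2\pi i\vec{k}\cdot\vec{y}}.
\end{equation*}
The Fourier symbol calculation already used in the proof of Lemma~\ref{lem:curlfree} then gives $\mathrm{div}_{\bf R}\vec{u}_1=g$, and since $g\in L^2(\Omega\times Y^m)$ we obtain $\mathrm{div}_{\bf R}\vec{u}_1\in L^2(\Omega;L^2_\sharp(Y^m))$, which combined with~\eqref{eq:divsplit_2s_weak} produces the claimed splitting~\eqref{eq:divsplit_2s}.

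The hardest point is this last step: one must accept a small-divisor subtlety familiar from the quasiperiodic setting, namely that the scalars $|{\bf R}^T\vec{k}|$ can be arbitrarily close to $0$ for infinitely many $\vec{k}$, so the formula defining $\vec{u}_1$ need not yield a function in $L^2_\sharp(Y^m;\R^3)$ even though its $\mathrm{div}_{\bf R}$ does. This is exactly the same phenomenon already observed after Proposition~\ref{prop:gradtwoscale} for the space ${\cal L}_{\bf R}$ and it is fully compatible with the statement of the proposition, which only controls the combination $\mathrm{div}_{\bf R}\vec{u}_1$ (and not $\vec{u}_1$ itself) in $L^2$.
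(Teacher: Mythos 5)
Your argument is correct and follows essentially the same route as the paper: two-scale compactness for $\vec{u}_\e$ and $\mathrm{div}\,\vec{u}_\e$, identification of the macroscopic divergence by testing against $\phi_1\in{\cal D}(\Omega)$, and construction of $\vec{u}_1$ as an ${\bf R}$-gradient field whose $\mathrm{div}_{\bf R}$ recovers the fluctuating part $\chi-\mathrm{div}\,\vec{u}$. Your explicit Fourier inversion is exactly the solution $\vec{u}_1=-\mathrm{grad}_{\bf R}\,\theta$ of the degenerate Poisson problem that the paper delegates to Lemma~\ref{lem:R-poisson_eqn}, and your closing remark on the small divisors $|{\bf R}^T\vec{k}|$ correctly identifies why only $\mathrm{div}_{\bf R}\vec{u}_1$, and not $\vec{u}_1$ itself, is controlled in $L^2$.
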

\begin{proof}
The limit \eqref{eq:divsplit_2s_weak} follows at once and we have  two-scale limits
\begin{equation*}
\e\mathrm{div}\; \vec{u}_\e \wdto \mathrm{div}_{\bf R} \; \vec{u}_0\xy = 0,
\end{equation*}
\begin{equation*}
\mathrm{div}\; \vec{u}_\e \wdto  \chi\xy
\end{equation*}
Using test functions $\phi\in W_0^{1,2}(\Omega)$ gives
\begin{equation*}
\mathrm{div}\; \vec{u}_\e \wdto \mathrm{div}\; \vec{u} =  \int_{Y^m}  u_0(\vec{x},\vec{y}) \; \mathrm{d}\vec{y}
 \end{equation*}
The difference defines, for fixed $\vec{x}$  in $\R^3$, a function
\begin{equation*}
f\xy  = \chi\xy  - \mathrm{div}\; \vec{u}(\vec{x})
\end{equation*}
Due to Lemma~\ref{lem:R-poisson_eqn}, the equation
\begin{equation*}
 - \mathrm{div}_{\bf R} \;  \mathrm{grad}_{\bf R}\;{\theta}\xy  = f\xy
\end{equation*}
 has  a unique solution in
 $X^\perp$  for a.e. $\vec{x} \in \R^3$.
 Defining
 \begin{equation*}
 \vec{u}_1\xy = - \mathrm{grad}_{\bf R}\;{\theta}\xy
\end{equation*}
 yields the limit \eqref{eq:divsplit_2s}.
\end{proof}
We have the following analogue of Proposition~1.14.(ii) in \cite{Allaire1992}.
\begin{proposition}\label{prop:eta_grad}
Let  $\{u_\e\}$ and $\{\e\mathrm{grad}\;{u}_\e\}$ be two uniformly bounded
sequences in $L^2(\Omega)$ and $L^2(\Omega;\R^3)$, respectively.
Then there exist a subsequence
 $\{{u}_{\e_k}\}$
and a function {${u}_0\in L^2(\O,{\cal H}_{\sharp}(\mathrm{grad}_{\bf R} ,Y^m))$}
such that
\begin{equation}\label{eq:u_2s_weak}
u_{\e_k} \wdto u_0(\vec{x},\vec{y})\; \qquad \in L^2(\O,L^2_{\sharp}(Y^m))
\end{equation}
\begin{equation}\label{eq:etagradu_2s}
{\e_k} \, \mathrm{grad}\; u_{\e_k} \wdto  \mathrm{grad}_{\bf R}  \; {u}_0\xy  \; \in L^2(\O,L^2_{\sharp}(Y^m;\R^3))
\end{equation}
as ${\e_k} \to 0$.
\end{proposition}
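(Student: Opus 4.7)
The plan is to adapt the argument for Proposition~1.14(ii) in \cite{Allaire1992} to the cut-and-projection setting, mirroring the integration-by-parts technique already used in the proofs of Propositions~\ref{prop:grad-split}, \ref{prop:curl_split}, and~\ref{prop:div_split} above.

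First, I would extract a suitable subsequence. By Proposition~\ref{prop:weaktwoscaleprop} applied to $\{u_\e\}$, which is bounded in $L^2(\Omega)$, there exist a subsequence and $u_0\in L^2(\Omega\times Y^m)$ ($Y^m$-periodic in $\vec{y}$) with $u_{\e_k}\wdto u_0$. Applying the same result componentwise to the bounded sequence $\{\e\,\mathrm{grad}\;u_\e\}$ in $L^2(\Omega;\R^3)$, a further subsequence (still denoted $\e_k$) two-scale cut-and-projection converges weakly to some $\vec{\chi}\in L^2(\Omega\times Y^m;\R^3)$.

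The core step is to identify $\vec{\chi}=\mathrm{grad}_{\bf R}\,u_0$. Pick any test function $\vec{\phi}\in\mathcal{D}(\Omega;C^\infty_\sharp(Y^m;\R^3))$. The chain rule together with Definition~\ref{def:R-differetnialoperators} yields
\begin{equation*}
\nabla_{\vec{x}}\cdot\vec{\phi}\xxe = (\mathrm{div}_{\vec{x}}\vec{\phi})\xxe + \frac{1}{\e}(\mathrm{div}_{\bf R}\vec{\phi})\xxe,
\end{equation*}
so integrating $\e\,\mathrm{grad}\;u_\e$ against $\vec{\phi}\xxe$ by parts (no boundary contribution because $\vec{\phi}$ has compact support in $\Omega$) gives
\begin{equation*}
\IOm \e\,\mathrm{grad}\;u_\e\cdot\vec{\phi}\xxe\,\mathrm{d}\vec{x} = -\IOm u_\e\left[\e\,(\mathrm{div}_{\vec{x}}\vec{\phi})\xxe + (\mathrm{div}_{\bf R}\vec{\phi})\xxe\right]\mathrm{d}\vec{x}.
\end{equation*}
Passing to the two-scale cut-and-projection limit along $\e_k\to 0$, the term multiplied by $\e_k$ drops out and one obtains
\begin{equation*}
\IOAY \vec{\chi}(\vec{x},\vec{y})\cdot\vec{\phi}(\vec{x},\vec{y})\,\mathrm{d}\vec{x}\,\mathrm{d}\vec{y} = -\IOAY u_0(\vec{x},\vec{y})\,\mathrm{div}_{\bf R}\vec{\phi}(\vec{x},\vec{y})\,\mathrm{d}\vec{x}\,\mathrm{d}\vec{y}.
\end{equation*}
This is the distributional identity $\vec{\chi}(\vec{x},\cdot)=\mathrm{grad}_{\bf R}\,u_0(\vec{x},\cdot)$ on $Y^m$ for almost every $\vec{x}\in\Omega$; since $\vec{\chi}(\vec{x},\cdot)\in L^2_\sharp(Y^m;\R^3)$ for a.e.\ $\vec{x}$, definition~\eqref{def:grad-Yspace} then places $u_0(\vec{x},\cdot)$ in $\mathcal{H}_\sharp(\mathrm{grad}_{\bf R},Y^m)$ for a.e.\ $\vec{x}$, completing the proof.

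The main obstacle is making rigorous the claim that the above distributional identity characterizes $\vec{\chi}$ as $\mathrm{grad}_{\bf R}\,u_0$ in this quasiperiodic context. I would settle it via the Fourier characterization: expanding $u_0(\vec{x},\cdot)$ and $\vec{\phi}(\vec{x},\cdot)$ in the basis $\{e^{2\pi i\vec{k}\cdot\vec{y}}\}_{\vec{k}\in\Z^m}$ and using $\mathrm{grad}_{\bf R}\,e^{2\pi i\vec{k}\cdot\vec{y}}=2\pi i\,{\bf R}^T\vec{k}\,e^{2\pi i\vec{k}\cdot\vec{y}}$, the identity uniquely fixes the Fourier coefficients of $\vec{\chi}(\vec{x},\cdot)$ as $2\pi i\,{\bf R}^T\vec{k}$ times those of $u_0(\vec{x},\cdot)$; the $L^2$-bound on $\vec{\chi}$ then encodes exactly the summability condition required for $u_0(\vec{x},\cdot)\in\mathcal{H}_\sharp(\mathrm{grad}_{\bf R},Y^m)$, consistent with the structure already exploited in Proposition~\ref{prop:gradtwoscale}.
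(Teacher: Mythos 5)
Your proof is correct and takes essentially the same approach as the paper's: integrate $\e_k\,\mathrm{grad}\,u_{\e_k}$ by parts against oscillating test functions so that the explicit factor $\e_k$ kills the $\vec{x}$-derivative term, pass to the two-scale limit to obtain $\int\!\!\int \vec{\chi}\cdot\vec{\phi}\,\mathrm{d}\vec{x}\,\mathrm{d}\vec{y} = -\int\!\!\int u_0\,\mathrm{div}_{\bf R}\vec{\phi}\,\mathrm{d}\vec{x}\,\mathrm{d}\vec{y}$, and identify $\vec{\chi}=\mathrm{grad}_{\bf R}\,u_0$. The only cosmetic differences are that the paper tests with products $\phi_1(\vec{x})\vec{\phi}_2(\vec{y})$, $\vec{\phi}_2\in{\cal H}_{\sharp}(\mathrm{div}_{\bf R},Y^m)$, and concludes via the Green's identity of the Appendix, whereas you use general smooth test functions and make the identification (and the membership $u_0(\vec{x},\cdot)\in{\cal H}_{\sharp}(\mathrm{grad}_{\bf R},Y^m)$) explicit in Fourier space.
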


\begin{proof}
The limit \eqref{eq:u_2s_weak} follows at once, using Proposition~\ref{prop:weaktwoscaleprop}. {We have }
\begin{equation*}
{\e_k} \, \mathrm{grad}\; u_\e \wdto  \vec{\chi}\xy
\end{equation*}

  This  limit is characterized by choosing test functions $\phi_1 \in  W^{1,2}(\Omega)$ and 
$\vec{\phi}_2 \in $  ${\cal H}_{\sharp}(\mathrm{div}_{\bf R}, Y^m)$ which gives
\begin{equation*}
\begin{aligned}
 & \IOAY  \vec{\chi}\xy \cdot \phi_1(\vec{x}) \vec{\phi}_2({\bf y})\,\mathrm{d}\vec{x}\mathrm{d}\vec{y} = \\ &
\lim _{{\e_k}\to 0} \IOm {\e_k} \,\mathrm{grad}\; \vec{u}_{\e_k}  (\vec{x})  \cdot \phi_1(\vec{x}) \vec{\phi}_2\left(\frac{ {\bf R}\vec{x}}{{\e_k}}\right) \,\mathrm{d}\vec{x} = \\
&-\lim _{{\e_k}\to 0} \IOm \vec{u}_{\e_k}  (\vec{x}) \, {\e_k}\,\mathrm{div}\; \left(\phi_1(\vec{x}) \vec{\phi}_2\left(\frac{ {\bf R}\vec{x}}{{\e_k}}\right)\right) \,\mathrm{d}\vec{x} = \\
& -\lim_{{\e_k}\to 0} \IOm \vec{u}_{\e_k}  (\vec{x}) \left[ {\e_k} \, \mathrm{grad}\; \phi_1(\vec{x})   \cdot \vec{\phi}_2\left(\frac{ {\bf R}\vec{x}}{{\e_k}}\right) + \phi_1(\vec{x})\,\mathrm{div}_{\bf R}\;   \vec{\phi}_2\left(\frac{ {\bf R}\vec{x}}{{\e_k}}\right) \right]  \,\mathrm{d}\vec{x} = \\
& -\IOAY u_0\xy \phi_1(\vec{x}) \, \mathrm{div}_{\bf R} \;  \vec{\phi}_2({\bf y})\,\mathrm{d}\vec{x}\mathrm{d}\vec{y} = \\ &
\IOAY \left(\mathrm{grad}_{\bf R} \;  u_0\xy\right)  \cdot \phi_1(\vec{x}) \vec{\phi}_2({\bf y})\,\mathrm{d}\vec{x}\mathrm{d}\vec{y}
\end{aligned}
\end{equation*}
Then, $\vec{\chi}\xy =\mathrm{grad}_{\bf R} \;  u_0\xy$ almost everywhere in $\Omega\times Y^m$.
\end{proof}
Similarly, we have the following  convergence results of bounded sequences of curls and divergences of vector fields. The proofs are omitted since they are analogous to the proof of Proposition~\ref{prop:eta_grad}.

\begin{proposition}\label{prop:eta_curl}
Let  $\{\vec{u}_\e\}$ and $\{\e\mathrm{curl}\vec{u}_\e\}$ be two uniformly bounded
sequences in $L^2(\Omega;\R^3)$.
Then there exist a subsequence
 $\{\vec{u}_{\e_k}\}$
and a function
$\vec{u}_0$ in\\  $L^2(\O,{\cal H}_{\sharp}(\mathrm{curl}_{{\bf R}}, Y^m))$
such that
\begin{equation}\label{eq:vu_curl_2s_weak}
\vec{u}_{\e_k} \wdto \vec{u}_0(\vec{x},\vec{y})
\end{equation}
\begin{equation}\label{eq:etacurlu_2s}
{\e_k}\,\mathrm{curl}\; \vec{u}_{\e_k} \wdto  \mathrm{curl}_{\bf R} \; \vec{u}_0\xy
\end{equation}
as ${\e_k} \to 0$.
\end{proposition}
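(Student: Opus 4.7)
The plan is to mirror the argument used in Proposition~\ref{prop:eta_grad}, but with test functions adapted to the curl operator (that is, drawn from $C^\infty_\sharp(Y^m;\R^3)$ rather than from $\mathcal{H}_\sharp(\mathrm{div}_{\bf R},Y^m)$), and using the Stokes-type integration by parts in place of Green's formula.

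First, I would apply Proposition~\ref{prop:weaktwoscaleprop} component-wise to the bounded sequence $\{\vec{u}_\e\}\subset L^2(\Omega;\R^3)$ to extract a subsequence $\vec{u}_{\e_k}$ converging weakly in the two-scale sense to some $\vec{u}_0\in L^2(\Omega\times Y^m;\R^3)$, which is \eqref{eq:vu_curl_2s_weak}. Applying the same proposition to the bounded sequence $\{\e_k\,\mathrm{curl}\,\vec{u}_{\e_k}\}\subset L^2(\Omega;\R^3)$, a further diagonal subsequence (still labelled $\e_k$) yields
\[
\e_k\,\mathrm{curl}\,\vec{u}_{\e_k}\wdto \vec{\chi}(\vec{x},\vec{y})\in L^2(\Omega\times Y^m;\R^3).
\]
The task reduces to identifying $\vec{\chi}=\mathrm{curl}_{\bf R}\vec{u}_0$, which in particular will certify that $\vec{u}_0\in L^2(\Omega,\mathcal{H}_\sharp(\mathrm{curl}_{\bf R},Y^m))$ as claimed.

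Second, I would test against admissible functions $\phi_1(\vec{x})\vec{\phi}_2(\mathbf{R}\vec{x}/\e_k)$ with $\phi_1\in\mathcal{D}(\Omega)$ and $\vec{\phi}_2\in C^\infty_\sharp(Y^m;\R^3)$. The key scaling identity, obtained from the chain rule \eqref{eq:projected_partial_derivative} combined with the componentwise representation of $\mathrm{curl}_{\bf R}$ given in Remark~\ref{rem:divR_and_curlR}, is
\[
\mathrm{curl}_{\vec{x}}\bigl(\vec{\phi}_2(\mathbf{R}\vec{x}/\e_k)\bigr)=\frac{1}{\e_k}\,(\mathrm{curl}_{\bf R}\vec{\phi}_2)(\mathbf{R}\vec{x}/\e_k).
\]
Combining this with the product rule $\mathrm{curl}(f\vec{A})=f\,\mathrm{curl}\,\vec{A}+\nabla f\times\vec{A}$ gives
\[
\mathrm{curl}_{\vec{x}}\!\Bigl(\e_k\phi_1(\vec{x})\vec{\phi}_2\bigl(\tfrac{\mathbf{R}\vec{x}}{\e_k}\bigr)\Bigr) =\phi_1(\vec{x})\,(\mathrm{curl}_{\bf R}\vec{\phi}_2)(\mathbf{R}\vec{x}/\e_k)+\e_k\,\mathrm{grad}\,\phi_1(\vec{x})\times\vec{\phi}_2(\mathbf{R}\vec{x}/\e_k),
\]
and the second contribution is uniformly $O(\e_k)$ in $L^2(\Omega;\R^3)$, hence negligible in the limit.

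Third, Stokes' formula, whose boundary contribution vanishes because $\phi_1$ is compactly supported in $\Omega$, yields
\[
\int_\Omega\e_k\,\mathrm{curl}\,\vec{u}_{\e_k}\cdot\phi_1(\vec{x})\vec{\phi}_2(\mathbf{R}\vec{x}/\e_k)\,d\vec{x}=\int_\Omega\vec{u}_{\e_k}\cdot\mathrm{curl}_{\vec{x}}\!\Bigl(\e_k\phi_1(\vec{x})\vec{\phi}_2(\mathbf{R}\vec{x}/\e_k)\Bigr)\,d\vec{x}.
\]
Passing to the two-scale cut-and-projection limit on both sides (using the weak limits of $\e_k\,\mathrm{curl}\,\vec{u}_{\e_k}$ and $\vec{u}_{\e_k}$ together with the fact that $\phi_1(\vec{x})(\mathrm{curl}_{\bf R}\vec{\phi}_2)(\mathbf{R}\vec{x}/\e_k)$ is an admissible test function), I obtain
\[
\iint_{\Omega\times Y^m}\vec{\chi}(\vec{x},\vec{y})\cdot\phi_1(\vec{x})\vec{\phi}_2(\vec{y})\,d\vec{x}\,d\vec{y} =\iint_{\Omega\times Y^m}\vec{u}_0(\vec{x},\vec{y})\cdot\phi_1(\vec{x})\,\mathrm{curl}_{\bf R}\vec{\phi}_2(\vec{y})\,d\vec{x}\,d\vec{y}.
\]
Arbitrariness of $\phi_1$ and $\vec{\phi}_2$ identifies, for a.e.\ $\vec{x}\in\Omega$, the distributional identity $\mathrm{curl}_{\bf R}\vec{u}_0(\vec{x},\cdot)=\vec{\chi}(\vec{x},\cdot)$ on the torus $Y^m$; square integrability of $\vec{\chi}$ then shows $\vec{u}_0\in L^2(\Omega,\mathcal{H}_\sharp(\mathrm{curl}_{\bf R},Y^m))$ and delivers \eqref{eq:etacurlu_2s}.

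The main obstacle is purely bookkeeping, namely producing the precise scaling identity $\mathrm{curl}_{\vec{x}}(\vec{\phi}_2(\mathbf{R}\vec{x}/\e))=\e^{-1}(\mathrm{curl}_{\bf R}\vec{\phi}_2)(\mathbf{R}\vec{x}/\e)$; this is where the structural formula for $\mathrm{curl}_{\bf R}$ from Remark~\ref{rem:divR_and_curlR} is indispensable. Once this is in hand, the rest of the argument is formally identical to the divergence-paired case of Proposition~\ref{prop:eta_grad}.
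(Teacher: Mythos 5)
Your argument is correct and is essentially the proof the paper intends: the paper omits the proof of this proposition precisely because it is the curl analogue of Proposition~\ref{prop:eta_grad}, and you carry out that analogue faithfully --- subsequence extraction via Proposition~\ref{prop:weaktwoscaleprop}, the scaling identity $\mathrm{curl}_{\vec{x}}\bigl(\vec{\phi}_2({\bf R}\vec{x}/\e)\bigr)=\e^{-1}(\mathrm{curl}_{\bf R}\vec{\phi}_2)({\bf R}\vec{x}/\e)$, integration by parts with the $O(\e_k)$ remainder from the product rule, and identification of the limit via the Stokes-type formula of Lemma~\ref{Integration_by_parts_Stokes}. The only cosmetic difference is your use of smooth test functions $\vec{\phi}_2\in C^\infty_\sharp(Y^m;\R^3)$ rather than the larger space ${\cal H}_{\sharp}(\mathrm{curl}_{\bf R},Y^m)$ used in the paper's template, which changes nothing since density suffices for the identification.
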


\begin{proposition}\label{prop:eta_div}
Let  $\{\vec{u}_\e\}$ and $\{\e\mathrm{div}\vec{u}_\e\}$ be two uniformly bounded
sequences in $L^2(\Omega;\R^3)$ and in $L^2(\Omega)$.
Then there exist a subsequence
 $\{\vec{u}_{\e_k}\}$
and a function {$\vec{u}_0\in  L^2(\O,{\cal H}_{\sharp}(\mathrm{div}_{{\bf R}}, Y^m)) $}
such that
\begin{equation}\label{eq:vvu_2s_weak}
\vec{u}_{\e_k} \wdto \vec{u}_0(\vec{x},\vec{y})
\end{equation}
\begin{equation}\label{eq:etadivu_2s}
{\e_k}\, \mathrm{div}\; \vec{u}_{\e_k} \wdto  \mathrm{div}_{\bf R} \; \vec{u}_0\xy
\end{equation}
as ${\e_k} \to 0$.
\end{proposition}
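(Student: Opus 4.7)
}
The plan is to follow the blueprint of Proposition~\ref{prop:eta_grad} verbatim, trading gradient for divergence. First I would use Proposition~\ref{prop:weaktwoscaleprop}: since $\{\vec{u}_\e\}$ is bounded in $L^2(\Omega;\R^3)$ and $\{\e \,\mathrm{div}\,\vec{u}_\e\}$ is bounded in $L^2(\Omega)$, I extract a diagonal subsequence $\e_k \to 0$ and two-scale cut-and-projection limits
\begin{equation*}
\vec{u}_{\e_k} \wdto \vec{u}_0(\vec{x},\vec{y}) \in L^2(\Omega\times Y^m;\R^3),\qquad \e_k\,\mathrm{div}\,\vec{u}_{\e_k} \wdto \chi(\vec{x},\vec{y}) \in L^2(\Omega\times Y^m),
\end{equation*}
with $\vec{u}_0$ and $\chi$ both $Y^m$-periodic in $\vec{y}$.

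Next I identify $\chi$ with $\mathrm{div}_{\bf R}\vec{u}_0$. Pick admissible test functions of product type $\phi_1(\vec{x})\varphi_2({\bf R}\vec{x}/\e_k)$, with $\phi_1\in{\cal D}(\Omega)$ and $\varphi_2\in C^\infty_\sharp(Y^m)$. The chain rule and Definition~\ref{def:R-differetnialoperators} give
\begin{equation*}
\nabla_{\vec{x}}\left[\varphi_2\!\left(\tfrac{{\bf R}\vec{x}}{\e_k}\right)\right] = \tfrac{1}{\e_k}\,\mathrm{grad}_{\bf R}\,\varphi_2\!\left(\tfrac{{\bf R}\vec{x}}{\e_k}\right),
\end{equation*}
so integration by parts in $\vec{x}$ (using that $\phi_1$ is compactly supported in $\Omega$) yields
\begin{equation*}
\int_\Omega \e_k\,\mathrm{div}\,\vec{u}_{\e_k}\,\phi_1\varphi_2\!\left(\tfrac{{\bf R}\vec{x}}{\e_k}\right)d\vec{x}
=-\int_\Omega \vec{u}_{\e_k}\cdot\left[\e_k\,\nabla\phi_1\,\varphi_2\!\left(\tfrac{{\bf R}\vec{x}}{\e_k}\right)+\phi_1\,\mathrm{grad}_{\bf R}\varphi_2\!\left(\tfrac{{\bf R}\vec{x}}{\e_k}\right)\right]d\vec{x}.
\end{equation*}
Passing to the limit $\e_k\to 0$, the first term on the right vanishes because $\vec{u}_{\e_k}$ is bounded in $L^2$, and the two-scale limits deliver
\begin{equation*}
\IOAY \chi\,\phi_1(\vec{x})\varphi_2(\vec{y})\,\mathrm{d}\vec{x}\mathrm{d}\vec{y} = -\IOAY \vec{u}_0\cdot\phi_1(\vec{x})\,\mathrm{grad}_{\bf R}\varphi_2(\vec{y})\,\mathrm{d}\vec{x}\mathrm{d}\vec{y}.
\end{equation*}

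Since $\varphi_2$ is $Y^m$-periodic, the right-hand side is by definition $\int_\Omega\int_{Y^m}(\mathrm{div}_{\bf R}\vec{u}_0)\phi_1\varphi_2$ in the distributional sense on $Y^m$ (the Remark~\ref{rem:divR_and_curlR} interpretation $\mathrm{div}_{\bf R}\vec{u}=\nabla_{\vec{y}}\cdot({\bf R}\vec{u})$ makes this a genuine periodic IBP with no boundary contribution). Density of tensor products $\phi_1(\vec{x})\varphi_2(\vec{y})$ in $L^2(\Omega;C_\sharp(Y^m))$ yields $\chi=\mathrm{div}_{\bf R}\vec{u}_0$ a.e., which together with $\chi\in L^2(\Omega\times Y^m)$ gives $\vec{u}_0\in L^2(\Omega,{\cal H}_\sharp(\mathrm{div}_{\bf R},Y^m))$. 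The only delicate point is checking that the periodic IBP in $\vec{y}$ produces no spurious boundary term and that the resulting identity characterises $\mathrm{div}_{\bf R}\vec{u}_0$ as an element of $L^2$ rather than a mere distribution; both are automatic thanks to the periodicity of $\varphi_2$ and the $L^2$-bound on $\chi$.
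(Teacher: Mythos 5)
Your proof is correct and follows exactly the route the paper intends: the paper omits the proof of this proposition, stating it is analogous to that of Proposition~\ref{prop:eta_grad}, and your argument is precisely that analogue (extract limits via Proposition~\ref{prop:weaktwoscaleprop}, integrate by parts against $\phi_1(\vec{x})\varphi_2({\bf R}\vec{x}/\e_k)$, kill the $O(\e_k)$ term, and identify $\chi=\mathrm{div}_{\bf R}\vec{u}_0$ via the periodic Green's identity of Lemma~\ref{Integration_by_parts_green}).
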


The following results are the two-scale cut-and-projection counterpart of the two-scale limits of divergence free and curl free bounded sequences, \eg see \cite{Allaire1992} for the divergence free case. The proofs follow the lines of Propositions~\ref{prop:grad-split} and \ref{prop:eta_grad}.

\begin{proposition}
\label{prop:eta_div_free}
Let  $\{\vec{u}_\e\}$  be a divergence free sequence, uniformly bounded in $L^2(\Omega;\R^3)$.

Then there exist a subsequence $\{\vec{u}_{\e_k}\}$
and a function {$\vec{u}_0\in  L^2(\O,{\cal H}_{\sharp}(\mathrm{div}_{{\bf R}_0},Y^m))$}
such that
\begin{equation*}
\vec{u}_{\e_k} \wdto \vec{u}_0(\vec{x},\vec{y})
\end{equation*}
which satisfies
\begin{equation}
\label{eq:div_R_free_limit}
         \begin{aligned}
& \mathrm{div}_{\bf R}\, \vec{u}_0(\vec{x},\vec{y}) = 0 \qquad \mbox{a.e. } \Omega\times Y^m  \\
& \mathrm{div} \int_{Y^m} \vec{u}_0(\vec{x},\vec{y}) \; \mathrm{d}\vec{y} = 0  \qquad \mbox{a.e. } \Omega
         \end{aligned}
\end{equation}
\end{proposition}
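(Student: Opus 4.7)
The plan is to follow the template already used in Propositions~\ref{prop:grad-split} and \ref{prop:eta_grad}: first extract a two-scale cut-and-projection subsequential limit by compactness, then feed two different families of test functions into the identity $\mathrm{div}\,\vec{u}_\e\equiv 0$ to recover the two constraints on $\vec{u}_0$. Oscillating test functions of the form $\phi_1(\vec{x})\phi_2({\bf R}\vec{x}/\e)$, after multiplication by $\e$, will produce the cell-problem constraint $\mathrm{div}_{\bf R}\vec{u}_0=0$, while $\vec{y}$-independent test functions will produce the macroscopic constraint on the $Y^m$-mean of $\vec{u}_0$. Boundedness of $\{\vec{u}_\e\}$ in $L^2(\O;\R^3)$ together with Proposition~\ref{prop:weaktwoscaleprop} (applied componentwise) yields a subsequence $\{\vec{u}_{\e_k}\}$ and a limit $\vec{u}_0\in L^2(\O\times Y^m;\R^3)$ with $\vec{u}_{\e_k}\wdto\vec{u}_0$.

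For the fast (cell) constraint, pick $\phi_1\in{\cal D}(\O)$ and $\phi_2\in C^\infty_\sharp(Y^m)$ and test $\mathrm{div}\,\vec{u}_\e=0$ against $\varphi_\e(\vec{x})=\phi_1(\vec{x})\phi_2({\bf R}\vec{x}/\e)$; integration by parts gives
\begin{equation*}
0 = \int_\O\vec{u}_\e\cdot\Bigl(\phi_2({\bf R}\vec{x}/\e)\nabla\phi_1(\vec{x}) + \tfrac{1}{\e}\phi_1(\vec{x})(\mathrm{grad}_{\bf R}\phi_2)({\bf R}\vec{x}/\e)\Bigr)\,d\vec{x}.
\end{equation*}
Multiplying through by $\e_k$, the first term is $O(\e_k)$ by the $L^2$-boundedness of $\{\vec{u}_{\e_k}\}$, while the second converges by the definition of weak two-scale cut-and-projection convergence, so
\begin{equation*}
0 = \IOAY\vec{u}_0(\vec{x},\vec{y})\cdot\phi_1(\vec{x})\,\mathrm{grad}_{\bf R}\phi_2(\vec{y})\,d\vec{x}d\vec{y}.
\end{equation*}
By density of the tensor products $\phi_1\otimes\phi_2$ and integration by parts in $\vec{y}$ on $Y^m$ (which is unambiguous since by Remark~\ref{rem:divR_and_curlR} $\mathrm{div}_{\bf R}$ is the $\vec{y}$-divergence of the $m$-vector ${\bf R}\vec{u}_0$), this yields $\mathrm{div}_{\bf R}\vec{u}_0(\vec{x},\cdot)=0$ in $Y^m$ for a.e.\ $\vec{x}\in\O$, so $\vec{u}_0\in L^2(\O,{\cal H}_\sharp(\mathrm{div}_{{\bf R}_0},Y^m))$.

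For the slow (macroscopic) constraint, test the same identity against $\phi\in{\cal D}(\O)$ independent of $\vec{y}$:
\begin{equation*}
0 = \int_\O(\mathrm{div}\,\vec{u}_\e)\,\phi\,d\vec{x} = -\int_\O\vec{u}_\e\cdot\nabla\phi\,d\vec{x} \;\longrightarrow\; -\int_\O\Bigl(\int_{Y^m}\vec{u}_0(\vec{x},\vec{y})\,d\vec{y}\Bigr)\cdot\nabla\phi(\vec{x})\,d\vec{x}.
\end{equation*}
Since the limit vanishes for every such $\phi$, this is exactly $\mathrm{div}\int_{Y^m}\vec{u}_0\,d\vec{y}=0$ in ${\cal D}'(\O)$, as required. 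The only real obstacle is the bookkeeping in the previous step: one must justify the passage from the \emph{global} distributional identity on $\O\times Y^m$ produced by two-scale convergence to the \emph{pointwise-in-$\vec{x}$} cell constraint entering the definition of ${\cal H}_\sharp(\mathrm{div}_{{\bf R}_0},Y^m)$; this amounts to the density of tensor-product test functions and a standard Fubini/measurable-selection argument, after which every remaining step is a direct adaptation of the already proven gradient, divergence, and curl compactness propositions.
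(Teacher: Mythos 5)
Your proof is correct and follows essentially the same route the paper intends: the paper's own "proof" is only the remark that the argument follows the lines of Propositions~\ref{prop:grad-split} and \ref{prop:eta_grad}, i.e.\ extract the two-scale cut-and-projection limit by compactness, use $\e$-scaled oscillating test functions $\phi_1(\vec{x})\phi_2({\bf R}\vec{x}/\e)$ to obtain $\mathrm{div}_{\bf R}\vec{u}_0=0$ (via Lemma~\ref{lem:orthogonality} or, equivalently, your distributional integration by parts in $\vec{y}$), and $\vec{y}$-independent test functions for the macroscopic constraint. Your localization step from the identity on $\O\times Y^m$ to the pointwise-in-$\vec{x}$ cell constraint is the standard tensor-product density argument and is handled adequately.
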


\begin{proposition}
\label{prop:eta_curl_free}
Let  $\{\vec{u}_\e\}$  be a curl free sequence,   uniformly bounded in\\ $L^2(\Omega;\R^3)$.
Then there exist a subsequence
 $\{\vec{u}_{\e_k}\}$
and a function $\vec{u}_0$ in\\  $L^2(\O,{\cal H}_{\sharp}(\mathrm{curl}_{{\bf R}_0}, Y^m)) $
such that
\begin{equation*}
\vec{u}_{\e_k} \wdto \vec{u}_0(\vec{x},\vec{y})
\end{equation*}
which satisfies
\begin{equation*}
        \begin{aligned}
& \mathrm{curl}_{\bf R}\, \vec{u}_0(\vec{x},\vec{y}) = \vec{0} \qquad \mbox{a.e. } \Omega\times Y^m  \\
&  \mathrm{curl}\int_{Y^m} \vec{u}_0(\vec{x},\vec{y}) \; \mathrm{d}\vec{y} = \vec{0}  \qquad \mbox{a.e. } \Omega
        \end{aligned}
\end{equation*}
\end{proposition}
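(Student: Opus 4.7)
The plan is to proceed along the lines of Propositions~\ref{prop:eta_curl} and \ref{prop:curl_split}, extracting the two-scale cut-and-projection limit and then identifying both constraints separately. First, since $\{\vec{u}_\e\}$ is uniformly bounded in $L^2(\Omega;\R^3)$, Proposition~\ref{prop:weaktwoscaleprop} applied component-wise gives a subsequence $\{\vec{u}_{\e_k}\}$ and a limit $\vec{u}_0 \in L^2(\Omega\times Y^m;\R^3)$ (with $Y^m$-periodicity in $\vec{y}$) such that $\vec{u}_{\e_k}\wdto \vec{u}_0$.

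Next, to obtain the microscopic constraint $\mathrm{curl}_{\bf R}\,\vec{u}_0 = \vec{0}$ a.e. in $\Omega\times Y^m$, I would observe that $\mathrm{curl}\,\vec{u}_\e \equiv \vec{0}$ trivially implies that $\e \,\mathrm{curl}\,\vec{u}_\e \equiv \vec{0}$ is a bounded sequence in $L^2(\Omega;\R^3)$. Applying Proposition~\ref{prop:eta_curl} to this (extracting a further subsequence if needed, still labelled $\e_k$) yields
\begin{equation*}
\vec{0} = \e_k\,\mathrm{curl}\,\vec{u}_{\e_k}\wdto \mathrm{curl}_{\bf R}\,\vec{u}_0(\vec{x},\vec{y})
\end{equation*}
so that $\mathrm{curl}_{\bf R}\,\vec{u}_0 = \vec{0}$ a.e., which in particular places $\vec{u}_0$ in $L^2(\Omega, {\cal H}_{\sharp}(\mathrm{curl}_{{\bf R}_{\vec{0}}}, Y^m))$.

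For the macroscopic constraint, I would first note that taking $\vec{y}$-independent test functions $\vec{\phi}(\vec{x})\in L^2(\Omega;\R^3)$ in the two-scale convergence statement shows that $\vec{u}_{\e_k}\wto \vec{u}$ weakly in $L^2(\Omega;\R^3)$ with $\vec{u}(\vec{x}) = \int_{Y^m}\vec{u}_0(\vec{x},\vec{y})\,\mathrm{d}\vec{y}$. Then for any $\vec{\psi}\in C_c^\infty(\Omega;\R^3)$, integration by parts and the curl-free assumption give
\begin{equation*}
\int_{\Omega}\vec{u}_{\e_k}\cdot\mathrm{curl}\,\vec{\psi}\,\mathrm{d}\vec{x} = \int_{\Omega}\mathrm{curl}\,\vec{u}_{\e_k}\cdot\vec{\psi}\,\mathrm{d}\vec{x} = 0
\end{equation*}
Passing to the weak limit yields $\int_{\Omega}\vec{u}\cdot\mathrm{curl}\,\vec{\psi}\,\mathrm{d}\vec{x} = 0$, which means $\mathrm{curl}\,\vec{u} = \vec{0}$ in the sense of distributions and hence almost everywhere in $\Omega$.

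The proof is almost entirely a mild assembly of results already proved in the paper, so I do not anticipate any serious obstacle. The only subtlety worth being careful about is that Proposition~\ref{prop:eta_curl} requires both $\{\vec{u}_\e\}$ and $\{\e\,\mathrm{curl}\,\vec{u}_\e\}$ to be bounded in $L^2$, which is why the curl-free hypothesis (not just a bound on $\mathrm{curl}\,\vec{u}_\e$) is used to identify the microscopic constraint, and one must ensure that the subsequence extracted from Proposition~\ref{prop:weaktwoscaleprop} and the one produced by Proposition~\ref{prop:eta_curl} are compatible — this is standard since the latter is extracted from the former.
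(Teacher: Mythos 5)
Your proposal is correct and follows essentially the route the paper intends: the paper omits the proof, stating only that it ``follows the lines of Propositions~\ref{prop:grad-split} and \ref{prop:eta_grad}'', and your use of Proposition~\ref{prop:eta_curl} applied to the identically zero sequence $\e\,\mathrm{curl}\,\vec{u}_\e$ is just a packaged form of that same test-function computation, while the macroscopic constraint via weak $L^2$ convergence of $\vec{u}_{\e_k}$ to $\int_{Y^m}\vec{u}_0\,\mathrm{d}\vec{y}$ and passage to the limit in $\int_\Omega \vec{u}_{\e_k}\cdot\mathrm{curl}\,\vec{\psi}\,\mathrm{d}\vec{x}=0$ is the standard distributional argument. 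No gaps.
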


\section{Illustrative examples in physics}\label{sec:Examples}

Let us now apply the two-scale cut-and-projection convergence to the homogenization of three problems of interest to the physics community.
\subsection{Homogenization of the electrostatic case}

Consider the quasiperiodic heterogeneous electrostatic problem, \ie
 \begin{equation}  \label{eq:static_eps}
     \left\{
        \begin{aligned}
          -  \mathrm{div} \; \sigma\left(\frac{{\bf R} \vec{x}}{\e}\right)  \mathrm{grad} u_\e(\vec{x}) & = f (\vec{x}) \;, \qquad \vec{x} \in \Omega\\
            u_\e|_{\partial\Omega} & = 0
        \end{aligned}
    \right.
 \end{equation}
where $f\in W^{-1,2}(\Omega)$ and $\sigma$ satisfies \eqref{eq:coersive}.
The solutions are uniformly bounded in $W^{1,2}_0(\Omega)$ with respect to $\e$.

\begin{theorem}\label{thm:electrostatic_hom}
The sequence of solutions  $\{u_\e\}$ that converges weakly in $W^{1,2}_0(\Omega)$  to the solution $\{u\}$ of the homogenized equation
 \begin{equation}  \label{eq:homogenized_static_strong}
     \left\{
        \begin{aligned}
   -  \mathrm{div} \;  \sigma^h  \nabla  u(\vec{x})  & =    f (\vec{x})  \;, \qquad \vec{x} \in \Omega\\
            u|_{\partial\Omega} & = 0
        \end{aligned}
        \right.
\end{equation}
where
\begin{equation}  \label{eq:homogenized_sigma_static}
     \sigma^h_{ik} =   \int_{Y^m} \sigma_{ij}\left(\vec{y} \right) \left( \delta_{jk} -  \left({\bf R}^T_j\nabla_{\vec{y}}\right)\chi^k (\vec{y})\right) \mathrm{d}\vec{y}
\end{equation}
and $\left({\bf R}^T_j\nabla_{\vec{y}}\right)\chi^k \in L^2(Y^m; \R^3)$
solves the local equation
\begin{equation}\label{eq:local_static}
    \int_{Y^m} \sigma_{ij}\left(\vec{y} \right) \left( \delta_{jk} -  \left({\bf R}^T_j\nabla_{\vec{y}}\right)\chi^k (\vec{y})\right)  \left({\bf R}^T_i\nabla_{\vec{y}}\right) \phi(\vec{y}) \; \mathrm{d}\vec{y}=0 \;, \qquad \phi \in W^{1,2}_\sharp(Y^m)
\end{equation}
supplied with periodic boundary conditions.
\end{theorem}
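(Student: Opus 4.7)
The plan is to follow the standard two-scale convergence route for periodic homogenization, but with every gradient in the cell replaced by the projected operator $\mathrm{grad}_{\mathbf R}$, invoking the compactness result Proposition~\ref{prop:grad-split} in place of Allaire's gradient-splitting lemma. By the coercivity \eqref{eq:coersive} of $\sigma$ and the Poincar\'e inequality, $\{u_\eta\}$ is uniformly bounded in $W^{1,2}_0(\Omega)$, so Proposition~\ref{prop:grad-split} yields a subsequence, together with $u\in W^{1,2}_0(\Omega)$ and $u_1\in L^2(\Omega;W^{1,2}_\sharp(Y^m))$, such that $u_{\eta_k}\dto u(\vec x)$ and $\mathrm{grad}\,u_{\eta_k}\wdto \mathrm{grad}\,u(\vec x)+\mathrm{grad}_{\mathbf R}u_1(\vec x,\vec y)$.

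Next I would test the weak form of \eqref{eq:static_eps} against an oscillating test function
\begin{equation*}
v_\eta(\vec x)=\phi_0(\vec x)+\eta\,\phi_1\!\left(\vec x,\tfrac{\mathbf R\vec x}{\eta}\right),\qquad \phi_0\in\mathcal D(\Omega),\ \phi_1\in\mathcal D(\Omega;C^\infty_\sharp(Y^m)),
\end{equation*}
so that $\mathrm{grad}\,v_\eta=\mathrm{grad}\,\phi_0+\eta(\mathrm{grad}_{\vec x}\phi_1)(\vec x,\mathbf R\vec x/\eta)+(\mathrm{grad}_{\mathbf R}\phi_1)(\vec x,\mathbf R\vec x/\eta)$. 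The middle term vanishes in $L^2$ as $\eta\to 0$, while admissibility of $\phi_0+\mathrm{grad}_{\mathbf R}\phi_1$ for two-scale testing against the weak-two-scale limit of $\mathrm{grad}\,u_\eta$ gives the two-scale identity
\begin{equation*}
\IOAY \sigma(\vec y)\bigl[\mathrm{grad}\,u+\mathrm{grad}_{\mathbf R}u_1\bigr]\cdot\bigl[\mathrm{grad}\,\phi_0+\mathrm{grad}_{\mathbf R}\phi_1\bigr]\mathrm d\vec y\,\mathrm d\vec x=\int_\Omega f\phi_0\,\mathrm d\vec x.
\end{equation*}

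I would then decouple this variational equation in the standard way. Taking $\phi_0\equiv 0$ and letting $\phi_1$ range over the test space yields the local problem; substituting the ansatz $u_1(\vec x,\vec y)=-\partial_{x_k}u(\vec x)\,\chi^k(\vec y)$ (linearity in $\mathrm{grad}\,u$) and using $\mathrm{grad}_{\mathbf R}=\mathbf R^T\nabla_{\vec y}$ from Definition~\ref{def:R-differetnialoperators} gives exactly \eqref{eq:local_static} for each $k$. Conversely, taking $\phi_1\equiv 0$ and inserting the ansatz produces
\begin{equation*}
-\mathrm{div}\Bigl(\sigma^h\mathrm{grad}\,u\Bigr)=f,\qquad\sigma^h_{ik}=\int_{Y^m}\sigma_{ij}(\vec y)\bigl(\delta_{jk}-(\mathbf R_j^T\nabla_{\vec y})\chi^k(\vec y)\bigr)\mathrm d\vec y,
\end{equation*}
i.e.\ \eqref{eq:homogenized_static_strong}--\eqref{eq:homogenized_sigma_static}. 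Uniqueness of the limit then upgrades the subsequence convergence to convergence of the whole sequence.

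The main obstacle is the well-posedness of the cell problem \eqref{eq:local_static}, because the bilinear form $(\phi,\psi)\mapsto\int_{Y^m}\sigma\,\mathrm{grad}_{\mathbf R}\phi\cdot\mathrm{grad}_{\mathbf R}\psi$ is degenerate: its kernel contains every $\phi\in W^{1,2}_\sharp(Y^m)$ with $\nabla_{\vec y}\phi\in\mathrm{Ker}(\mathbf R\mathbf R^T)$, i.e.\ the whole space $X$ of \eqref{eq:W12_composition}. This is exactly the obstruction treated by Lemma~\ref{lem:R-poisson_eqn}: restricting $\chi^k$ to the orthogonal complement $X^\perp$ defined in \eqref{eq:X-proj_space} makes $\mathbf R\mathbf R^T$ coercive on the admissible test space, so the Lax--Milgram theorem applies and gives a unique $\chi^k\in X^\perp$. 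I would also record that $\mathrm{grad}_{\mathbf R}\chi^k$, which is the object actually appearing in \eqref{eq:homogenized_sigma_static} and in the ansatz for $u_1$, is insensitive to the choice of representative modulo $X$, so that both the corrector $u_1$ and the effective tensor $\sigma^h$ are intrinsically well defined.
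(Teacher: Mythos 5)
Your proposal is correct and follows essentially the same route as the paper: uniform bounds plus Proposition~\ref{prop:grad-split}, oscillating test functions to obtain the two-scale variational identity, the separation-of-variables ansatz $u_1=-\partial_{x_k}u\,\chi^k$ to extract the local equation \eqref{eq:local_static} and the effective tensor \eqref{eq:homogenized_sigma_static}, and then the macroscopic test function for \eqref{eq:homogenized_static_strong}; the only cosmetic difference is that you use a single combined test function $\phi_0+\eta\phi_1$ where the paper tests separately with $\eta\phi$ and with $\phi\in\mathcal D(\Omega)$. Your closing discussion of the degenerate cell problem and the restriction to $X^\perp$ is also consistent with what the paper defers to the subsequent proposition and to Lemma~\ref{lem:R-poisson_eqn}.
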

\begin{proof}
Choosing test functions $\e\,\phi$, where  $\phi \in D(\Omega;C^\infty_\sharp(Y^m))$ and defining \\
$\phi_\e(x):= \phi(x,\frac{{\bf R} \vec{x}}{\e})$ gives after an integration by parts
 \begin{equation*}  \label{eq:weak_static_eps}
    \int_\Omega \sigma\left(\frac{{\bf R} \vec{x}}{\e}\right)  \mathrm{grad} u_\e(\vec{x}) \cdot \left(\e \nabla \phi_\e(\vec{x}) + \nabla_{\bf R} \phi_\e(\vec{x})\right) \; \mathrm{d}\vec{x} = \int_\Omega f (\vec{x}) \e  \phi_\e(\vec{x})\; \mathrm{d}\vec{x}
\end{equation*}
Sending $\e \to 0$ gives due to Proposition~\ref{prop:grad-split}
 \begin{equation*}  \label{eq:locallimit_static}
    \int_\Omega  \int_{Y^m} \sigma\left(\vec{y} \right) \left(\nabla u(\vec{x}) +  \nabla_{\bf R} u_1(\vec{x}, \vec{y})\right) \cdot \nabla_{\bf R} \phi(\vec{x}, \vec{y}) \; \mathrm{d}\vec{x} \mathrm{d}\vec{y}= 0
\end{equation*}
Separation of variables by letting $ \nabla_{\bf R} u_1(\vec{x}, \vec{y}) = - \nabla_{\bf R} \chi^k (\vec{y})\partial_{\vec{x}_k}u(\vec{x})$ yields the local equation
\begin{equation*}  \label{eq:local_static_proof}
    \int_{Y^m} \sigma_{ij}\left(\vec{y} \right) \left( \delta_{jk} -  \left({\bf R}^T_j\nabla_{\vec{y}}\right)\chi^k (\vec{y})\right)  \left({\bf R}^T_i\nabla_{\vec{y}}\right) \phi(\vec{y}) \; \mathrm{d}\vec{y}=0 \;, \qquad \phi \in W^{1,2}_\sharp(Y^m)
\end{equation*}
The homogenized equation is obtained by choosing test function $\phi \in D(\Omega)$. We get analogously
 \begin{equation*}  \label{eq:globallimit_static}
    \int_\Omega  \int_{Y^m} \sigma\left(\vec{y} \right) \left( \nabla u(\vec{x}) +  \nabla_{\bf R} u_1(\vec{x}, \vec{y})\right) \cdot \nabla \phi(\vec{x}) \; \mathrm{d}\vec{x} \mathrm{d}\vec{y}=   \int_\Omega  \int_{Y^m} f (\vec{x})  \phi(\vec{x}) \; \mathrm{d}\vec{x} \mathrm{d}\vec{y}
\end{equation*}
That is
 \begin{equation*}  \label{eq:homogenized_static}
    \int_\Omega  \sigma^h  \nabla  u(\vec{x})  \cdot \nabla \phi(\vec{x}) \; \mathrm{d}\vec{x}  =   \int_\Omega   f (\vec{x})  \phi(\vec{x}) \; \mathrm{d}\vec{x}
\end{equation*}
where
\begin{equation*} 
          \sigma^h_{ik} =   \int_{Y^m} \sigma_{ij}\left(\vec{y} \right) \left( \delta_{jk} -  \left({\bf R}^T_j\nabla_{\vec{y}}\right)\chi^k (\vec{y})\right) \mathrm{d}\vec{y}
\end{equation*}
which completes the proof.
\end{proof}
The local equation \eqref{eq:local_static} provides a bounded gradient $\left({\bf R}_j^T\nabla_{\vec{y}}\right)\chi^k=\left({\bf R}^T\nabla_{\vec{y}}\right)_j\chi^k $ in $L^2(Y^m;\R^3) $, which follows by standard arguments.
\begin{proposition}
The local equation \eqref{eq:local_static} has a unique solution
$\nabla_{\bf R}\chi^k$  in\\ $L^2(Y^m;\R^3)$.
\end{proposition}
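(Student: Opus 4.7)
The plan is to recast the local cell problem as an abstract variational problem on the space of $\mathbf R$-gradients and to apply the Lax-Milgram theorem. Define the unknown $\vec\xi^k := \mathrm{grad}_{\bf R}\chi^k \in {\cal L}_{\bf R_\sharp}(Y^m, \R^3)$. Writing $\vec\psi := \mathrm{grad}_{\bf R}\phi$ for test potentials $\phi \in W^{1,2}_\sharp(Y^m)$ and noting that $\vec\psi$ then ranges exactly over ${\cal L}_{\bf R_\sharp}(Y^m, \R^3)$, the local equation \eqref{eq:local_static} becomes the variational problem
\begin{equation*}
\text{Find } \vec\xi^k \in {\cal L}_{\bf R_\sharp}(Y^m, \R^3) \text{ such that } a(\vec\xi^k, \vec\psi) = \ell_k(\vec\psi) \quad \forall\, \vec\psi \in {\cal L}_{\bf R_\sharp}(Y^m, \R^3),
\end{equation*}
with $a(\vec\xi, \vec\psi) := \int_{Y^m} \sigma_{ij}(\vec y)\,\xi_j\,\psi_i\,\mathrm d\vec y$ and $\ell_k(\vec\psi) := \int_{Y^m} \sigma_{ik}(\vec y)\,\psi_i\,\mathrm d\vec y$.

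First I would verify that ${\cal L}_{\bf R_\sharp}(Y^m,\R^3)$ is a Hilbert space with the $L^2_\sharp(Y^m; \R^3)$ inner product. This is immediate from Lemma~\ref{lem:orthogonality}(ii), since that decomposition displays ${\cal L}_{\bf R_\sharp}$ as the orthogonal complement of the closed subspace ${\cal H}_\sharp(\mathrm{div}_{{\bf R}_0}, Y^m)$, hence ${\cal L}_{\bf R_\sharp}$ is itself closed. Next, continuity of $a$ and $\ell_k$ follows from $\sigma \in L^\infty(Y^m; \R^{3\times 3})$ together with the Cauchy--Schwarz inequality. The key point is coercivity: applying the pointwise ellipticity bound \eqref{eq:coersive} to the vector $\vec\xi(\vec y) \in \R^3$ gives
\begin{equation*}
a(\vec\xi, \vec\xi) = \int_{Y^m} \sigma_{ij}(\vec y)\,\xi_j(\vec y)\,\xi_i(\vec y)\,\mathrm d\vec y \;\geq\; c \int_{Y^m} |\vec\xi(\vec y)|^2\,\mathrm d\vec y = c\,\|\vec\xi\|_{L^2_\sharp(Y^m;\R^3)}^2,
\end{equation*}
so $a$ is coercive on ${\cal L}_{\bf R_\sharp}$ with the ambient $L^2$ norm. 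The Lax-Milgram theorem then delivers a unique $\vec\xi^k \in {\cal L}_{\bf R_\sharp}(Y^m, \R^3)$, which is exactly the desired unique $\mathrm{grad}_{\bf R}\chi^k$.

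The main subtlety will be the degeneracy of the operator $\mathrm{grad}_{\bf R} = {\bf R}^T\nabla_{\vec y}$: its kernel in $W^{1,2}_\sharp(Y^m)$ is nontrivial (it contains exactly the space $X$ from \eqref{eq:W12_composition}), so the potential $\chi^k$ itself is only determined modulo $X$. This is why the proposition asserts uniqueness of the gradient rather than of $\chi^k$. By working directly with $\vec\xi^k$ in ${\cal L}_{\bf R_\sharp}$ we bypass this kernel entirely, which is precisely the payoff of the orthogonal decomposition of Lemma~\ref{lem:orthogonality}. Equivalently, one may argue using the quotient space $W^{1,2}_\sharp(Y^m)/X \cong X^\perp$, on which $\mathrm{grad}_{\bf R}$ is injective and bounded below by Lemma~\ref{lem:R-poisson_eqn}; this gives an alternative but essentially equivalent route to coercivity.
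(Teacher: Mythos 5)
Your proof is correct and follows essentially the same route as the paper's: both rest on the Lax--Milgram theorem together with the coercivity and boundedness of $\sigma$ in \eqref{eq:coersive}. If anything, yours is the more complete version, since you explicitly identify the Hilbert space on which Lax--Milgram acts (the closed subspace ${\cal L}_{\bf R_\sharp}(Y^m,\R^3)$ of ${\bf R}$-gradients, via Lemma~\ref{lem:orthogonality}), whereas the paper only records the a priori bound $\|\nabla_{\bf R}\chi^k\|_{L^2(Y^m;\R^3)}\le c_2/c_1$ obtained by testing with $\chi^k$ itself and leaves the choice of space implicit.
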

\begin{proof}
Let the   $\phi (\vec{y} )=  \chi^k$ in \eqref{eq:local_static}.
We get
\begin{equation*}
    \int_{Y^m} \sigma_{ij}\left(\vec{y} \right) \left( \delta_{jk} -  \left({\bf R}^T_j\nabla_{\vec{y}}\right)\chi^k (\vec{y})\right) \left({\bf R}^T_i\nabla_{\vec{y}}\right)\chi^k(\vec{y}) \; \mathrm{d}\vec{y}= 0
\end{equation*}
which by the  assumptions about coercivity  \eqref{eq:coersive} and that $\sigma$ is bounded certifies that there are two positive constants such that
\begin{equation*}
    c_1 \|{\nabla_{\bf R}}\chi^k \|^2_{L^2(Y^m;\R^3)} \leq c_2 \|{\nabla_{\bf R}}\chi^k \|_{L^2(Y^m;\R^3)}
\end{equation*}
\ie
\begin{equation*}
    \|{\nabla_{\bf R}}\chi^k \|_{L^2(Y^m;\R^3)} \leq \frac{c_2}{c_1}
\end{equation*}
Lax-Milgram theorem yields the result.
\end{proof}

Note that the higher dimensional elliptic equation has a degenerated elliptic kernel ${\bf R} \sigma {\bf R}^T$, by the same reason as in Lemma~\ref{lem:R-poisson_eqn},  which implies that we do not necessarily have a bounded potential  $\chi^k \in W^{1,2}(Y^m)$. But, as in Lemma~\ref{lem:R-poisson_eqn}, by imposing the constraint $\left({\bf I}_m -  {\bf R} \sigma {\bf R}^T\right)\nabla_{\vec{y}} \chi^k =  \vec{0}$ on the solutions we will certify bounded solutions.
 Doing this, restricts the currents to the $n$-dimensional hyperplane in $\R^m$. This is \eg reflected by \eqref{eq:local_static} which we solve only for the projected gradient on this hyperplane. This gradient can then be  used to get the potential on this hyperplane by integrating the projected gradients in the plane.
Another, and from a numerical implementation point of view  interesting alternative is to change variables in $\R^m$, \ie by rotating the coordinate system  to make the hyperplane parallel with the new (real) coordinate axes. This is done by finding the eigenvalues of ${\bf R}{\bf R}^T$. Obviously, the columns of ${\bf R}$ are all eigenvectors,  corresponding to the eigenvalue $\lambda=1$, of multiplicity $n$. This follows since ${\bf R}^T{\bf R} = {\bf I}_n$. The other ($m-n$) eigenvectors are in the kernel of  ${\bf R}{\bf R}^T$, \ie they correspond to the eigenvalue $\lambda=0$. We  denote the subspace spanned by them as ${\bf R}^{\perp}$. Defining new coordinates in $\R^m$ as $\vec{y}' = \left[ {\bf R} \; {\bf R}^{\perp}   \right]\vec{y}$ will give us the conductivity tensor $\sigma$ on the diagonal, and all other entries will be zero.
Since there is no cross talk between the first $n$ components of the gradient $\nabla_{\vec{y}'} \chi^k$, we can  truncate the vector and solve the higher dimensional system in $n$ dimensions. The higher dimension only comes into account in the description of the conductivity as periodic in $\R^m$. One could of course also regularize the higher dimensional equation by adding a small conductivity in the direction of the degeneracy in $\sigma$. In the rotated coordinate system that would correspond to introducing $\alpha I_m$ on the lower part of the diagonal, where $\alpha >0$ is the regularizing parameter.

We have the following, not optimal, corrector result. It can be made stronger using the method in \cite{Cherednichenko+Cooper2016}
\begin{proposition}[Correctors]\label{prop:correctors}
Let $u_\e$ and $u$ be  solutions of \eqref{eq:static_eps} and \eqref{eq:homogenized_static_strong}, respectively, and let $\nabla_{\bf R} \vec{\chi}$ solve \eqref{eq:local_static}, then
\begin{equation*}
 \lim_{\eta \to 0} \left\|  \nabla  u_\e(\vec{x}) - \nabla  u(\vec{x}) - \nabla  u(\vec{x})  \cdot \nabla_{\bf R} \vec{\chi} \left(\frac{{\bf R} \vec{x}}{\e}\right)  \right\|_{L^2(Y^m;\R^3)} = 0
\end{equation*}
\end{proposition}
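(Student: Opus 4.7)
My plan is to use the energy (Tartar) method in combination with the two-scale cut-and-projection machinery of Section~\ref{sec:Compactness}. Set $\Psi_\e(\vec{x}) := \nabla u(\vec{x}) + \nabla u(\vec{x})\cdot \nabla_{\bf R}\vec{\chi}({\bf R}\vec{x}/\eta)$ and $\xi_\e := \nabla u_\e - \Psi_\e$. The uniform coercivity \eqref{eq:coersive} reduces the statement to showing that the quadratic form $\int_\Omega \sigma_\e \xi_\e \cdot \xi_\e \,d\vec{x}$ vanishes as $\eta \to 0$, since
\begin{equation*}
c\,\|\xi_\e\|_{L^2(\Omega;\R^3)}^2 \leq \int_\Omega \sigma_\e(\vec{x})\,\xi_\e \cdot \xi_\e\,d\vec{x} = A_\e - 2 B_\e + C_\e,
\end{equation*}
where $A_\e := \int_\Omega \sigma_\e \nabla u_\e\cdot \nabla u_\e$, $B_\e := \int_\Omega \sigma_\e \nabla u_\e \cdot \Psi_\e$ and $C_\e := \int_\Omega \sigma_\e \Psi_\e\cdot \Psi_\e$.

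The term $A_\e$ is handled directly from \eqref{eq:static_eps}: testing with $u_\e$ yields $A_\e = \langle f, u_\e\rangle_{W^{-1,2},W^{1,2}_0}$, which by the weak convergence $u_\e \wto u$ in $W^{1,2}_0(\Omega)$ and the homogenized equation \eqref{eq:homogenized_static_strong} tends to $\langle f, u\rangle = \int_\Omega \sigma^h \nabla u\cdot \nabla u\,d\vec{x}$. For $B_\e$ and $C_\e$ I would invoke Proposition~\ref{prop:grad-split}, which gives $\nabla u_\e \wdto \nabla u(\vec{x}) + \nabla_{\bf R} u_1(\vec{x},\vec{y})$, combined with the (strong) two-scale cut-and-projection convergence $\Psi_\e \dto \Psi_0(\vec{x},\vec{y}) := \nabla u(\vec{x}) + \nabla u(\vec{x})\cdot \nabla_{\bf R}\vec{\chi}(\vec{y})$. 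Assertion~ii) of the proposition preceding the definition of strong two-scale convergence then yields
\begin{equation*}
\lim B_\e = \iint_{\Omega\times Y^m}\!\! \sigma(\vec{y})(\nabla u + \nabla_{\bf R} u_1)\cdot \Psi_0\,d\vec{x}d\vec{y}, \qquad \lim C_\e = \iint_{\Omega\times Y^m}\!\! \sigma(\vec{y})\Psi_0\cdot \Psi_0\,d\vec{x}d\vec{y}.
\end{equation*}

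To close the argument I would use the cell problem \eqref{eq:local_static}, tested against $\chi^l(\vec{y})\,\partial_{x_l}u(\vec{x})$, in combination with the separation-of-variables ansatz $\nabla_{\bf R} u_1 = -\nabla_{\bf R}\chi^k\,\partial_{x_k}u$ from the proof of Theorem~\ref{thm:electrostatic_hom}. This provides the orthogonality identity $\iint \sigma(\vec{y})(\nabla u + \nabla_{\bf R} u_1)\cdot \nabla_{\bf R}v\,d\vec{x}d\vec{y} = 0$ for every admissible $v(\vec{x},\vec{y})$ periodic in $\vec{y}$, and hence an algebraic identity expressing $\lim B_\e$ and $\lim C_\e$ in terms of $\int_\Omega \sigma^h \nabla u\cdot\nabla u\,d\vec{x}$; a short computation then shows that $A_\e - 2 B_\e + C_\e \to 0$, completing the proof.

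The main technical obstacle is the admissibility of $\Psi_\e$ as a two-scale cut-and-projection test function: since $\nabla u \in L^2(\Omega;\R^3)$ and $\nabla_{\bf R}\vec{\chi} \in L^2_\sharp(Y^m;\R^{3\times 3})$ are merely $L^2$, the pointwise product $\nabla u(\vec{x})\cdot \nabla_{\bf R}\vec{\chi}({\bf R}\vec{x}/\eta)$ does not automatically belong to the dense admissible class $L^2(\Omega;C_\sharp(Y^m;\R^3))$. I would overcome this by a standard density-and-diagonal argument: approximate $\nabla u$ by $\vec{v}^\delta\in C^\infty_c(\Omega;\R^3)$ and $\nabla_{\bf R}\vec{\chi}$ by $\vec{w}^\delta\in C_\sharp(Y^m;\R^{3\times 3})$, apply Proposition~\ref{prop:grad-split} and the strong two-scale limit results to the regularized product, and then send $\delta \to 0$ along a subsequence extracted diagonally with $\eta$, using the uniform $L^2$ bound on $\{\nabla u_\e\}$ and the coercivity of $\sigma_\e$ to absorb the errors.
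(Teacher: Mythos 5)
Your proposal is correct and follows essentially the same route as the paper: coercivity, expansion of the quadratic form $\int_\Omega\sigma_\e(\nabla u_\e-\Psi_\e)\cdot(\nabla u_\e-\Psi_\e)\,\mathrm{d}\vec{x}$, use of the PDE to evaluate the $\nabla u_\e\cdot\nabla u_\e$ term as $\langle f,u_\e\rangle$, passage to two-scale cut-and-projection limits in the cross and square terms, and cancellation via the homogenized and local equations. Two minor remarks: the paper does not assume $\sigma$ symmetric, so the two cross terms should be kept separate (as the paper's $I_2$ and $I_3$) rather than merged into $2B_\e$; and your density-and-diagonal treatment of the admissibility of $\Psi_\e$ is actually more careful than the paper's proof, which simply assumes the local solutions are admissible test functions.
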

Here, $\vec{\chi}$ is a vector with components $\chi^k$.
\begin{proof}
The proof follows the corresponding proof in \cite{Allaire1992}.
The coercivity assumption on the material property  yields
\begin{equation*}
  \begin{aligned}
     &c_1\left\|  \nabla  u_\e(\vec{x}) - \nabla  u(\vec{x}) - \nabla  u(\vec{x})  \cdot \nabla_{\bf R}\vec{\chi}   \left(\frac{{\bf R} \vec{x}}{\e}\right) \right\|^2  \leq \\
    & \int_{\Omega} \sigma\left(\frac{{\bf R} \vec{x}}{\e}\right)  \left[\nabla  u_\e(\vec{x}) - \nabla  u(\vec{x}) - \nabla  u(\vec{x})  \cdot \nabla_{\bf R}\vec{\chi}   \left(\frac{{\bf R} \vec{x}}{\e}\right) \right] \cdot \\
    & \left[\nabla  u_\e(\vec{x}) - \nabla  u(\vec{x}) - \nabla  u(\vec{x})  \cdot \nabla_{\bf R}\vec{\chi}   \left(\frac{{\bf R} \vec{x}}{\e}\right) \right]
    \; \mathrm{d}\vec{x} = \\
    & \underset{I_1}{\underbrace{\int_{\Omega}   f(\vec{x}) u_\e(\vec{x}) \; \mathrm{d}\vec{x} }}  -
    \underset{I_2}{\underbrace{\int_{\Omega} \sigma\left(\frac{{\bf R} \vec{x}}{\e}\right)  \nabla  u_\e(\vec{x})\cdot \left[ \nabla  u(\vec{x}) + \nabla  u(\vec{x})  \cdot \nabla_{\bf R}\vec{\chi}   \left(\frac{{\bf R} \vec{x}}{\e}\right)
    \right]  \; \mathrm{d}\vec{x}}} - \\
    &  \underset{I_3}{\underbrace{  \int_{\Omega} \sigma\left(\frac{{\bf R} \vec{x}}{\e}\right) \left[ \nabla  u(\vec{x}) + \nabla  u(\vec{x})  \cdot \nabla_{\bf R}\vec{\chi}   \left(\frac{{\bf R} \vec{x}}{\e}\right)
    \right]  \cdot \nabla  u_\e(\vec{x}) \; \mathrm{d}\vec{x}} } + \\
    &\underset{I_4}{\underbrace{ \int_{\Omega} \sigma\left(\frac{{\bf R} \vec{x}}{\e}\right) \left[ \nabla  u(\vec{x}) + \nabla  u(\vec{x})  \cdot \nabla_{\bf R}\vec{\chi}   \left(\frac{{\bf R} \vec{x}}{\e}\right)
    \right] \cdot \left[ \nabla  u(\vec{x}) + \nabla  u(\vec{x})  \cdot \nabla_{\bf R}\vec{\chi}   \left(\frac{{\bf R} \vec{x}}{\e}\right)
    \right]\; \mathrm{d}\vec{x}} }
  \end{aligned}
\end{equation*}
The integrals are evaluated individually, using the assumption that the local solutions are admissible test functions:
\begin{equation*}
    I_1 \to \int_{\Omega}   f(\vec{x}) u (\vec{x}) \; \mathrm{d}\vec{x}
\end{equation*}
\begin{equation*}
    I_2 \to \int_{\Omega}\int_{Y^m} \sigma\left(\vec{y} \right)  \left(  \nabla u(\vec{x}) +  \nabla_{\bf R} u_1(\vec{x}, \vec{y})\right)
     \cdot \left[ \nabla  u(\vec{x}) + \nabla  u(\vec{x})  \cdot \nabla_{\bf R}\vec{\chi} ( \vec{y} )
    \right]  \; \mathrm{d}\vec{y} \mathrm{d}\vec{x}
\end{equation*}
similarly
\begin{equation*}
    I_3 \to \int_{\Omega}\int_{Y^m} \sigma\left(\vec{y} \right) \left[ \nabla  u(\vec{x}) + \nabla  u(\vec{x})  \cdot \nabla_{\bf R}\vec{\chi} ( \vec{y} )
    \right] \cdot
    \left( \nabla u(\vec{x}) +  \nabla_{\bf R} u_1(\vec{x}, \vec{y})\right)
       \; \mathrm{d}\vec{y} \mathrm{d}\vec{x}
\end{equation*}
and
\begin{equation*}
    I_4 \to \int_{\Omega}\int_{Y^m} \sigma\left(\vec{y} \right) \left[ \nabla  u(\vec{x}) + \nabla  u(\vec{x})  \cdot \nabla_{\bf R}\vec{\chi} ( \vec{y} )
    \right] \cdot
    \left[ \nabla  u(\vec{x}) + \nabla  u(\vec{x})  \cdot \nabla_{\bf R}\vec{\chi} ( \vec{y} )
    \right]
       \; \mathrm{d}\vec{y} \mathrm{d}\vec{x}
\end{equation*}
We find that the limits of $-I_3+I_4 = 0$. Further, we get
\begin{equation*}
    \int_{\Omega}   f(\vec{x}) u (\vec{x}) \; \mathrm{d}\vec{x}  -
    \int_{\Omega}\int_{Y^m} \sigma\left(\vec{y} \right)  \left( \nabla u(\vec{x}) +  \nabla_{\bf R} u_1(\vec{x}, \vec{y})\right)
     \cdot   \nabla  u(\vec{x})   \; \mathrm{d}\vec{y} \mathrm{d}\vec{x} =0
\end{equation*}
due to the homogenized equation and
\begin{equation*}
    \int_{\Omega}\int_{Y^m} \sigma\left(\vec{y} \right)  \left( \nabla u(\vec{x}) +  \nabla_{\bf R} u_1(\vec{x}, \vec{y})\right)
     \cdot    \nabla  u(\vec{x})  \cdot \nabla_{\bf R}\vec{\chi} ( \vec{y} ) \; \mathrm{d}\vec{y} \mathrm{d}\vec{x} =0
\end{equation*}
due to the local equations.
\end{proof}

\subsection{Homogenization of the elastostatic case}

Consider the elasticity problem
 \begin{equation}  \label{eq:elastostatic_eps}
     \left\{
        \begin{aligned}
          -  \frac{\partial}{\partial x_j} \; C_{ijkl}\left(\frac{{\bf R} \vec{x}}{\e}\right)  \frac{\partial}{\partial x_l} {u_k}_\e(\vec{x}) & = f_i (\vec{x}) \;, \qquad  \vec{x} \in  \Omega \\
            {\bf u}_\e|_{\partial\Omega} & = {\bf 0}
        \end{aligned}
    \right.
 \end{equation}
where ${\bf f} \in W^{-1,2}(\Omega,\R^3)$
and $C_{ijkl}$ is the symmetric elasticity tensor that is bounded and coercive satisfying \eqref{eq:coersive_2}. 
The solutions are uniformly bounded in $W^{1,2}(\Omega,\R^3)$ with respect to $\e$.
\begin{theorem}\label{thm:elasto_hom}
There exists a subsequence of $\{{\bf u}_\e\}$ that converges weakly in\\
$W^{1,2}_0(\Omega)$  to the solution $\{{\bf u}\}$ of the homogenized equation
 \begin{equation}  \label{eq:homogenized_static_elast}
     \left\{
        \begin{aligned}
   - \frac{\partial}{\partial x_j} \; C_{ijkl}^h \;  \frac{\partial}{\partial x_l}  u_k(\vec{x})  & =    f_i (\vec{x})  \;, \qquad  \vec{x} \in  \Omega \\
            {\bf u}|_{\partial\Omega} & = {\bf 0}
        \end{aligned}
        \right.
\end{equation}
where
\begin{equation}  \label{eq:homogenized_sigma_static_elast}
     C_{ijkl}^h =   \int_{Y^m} C_{ijpq}\left(\vec{y} \right) \left( \delta_{pk}\delta_{ql} -  \nabla_{{\bf R}_q}\chi_p^{kl} (\vec{y})\right) \mathrm{d}\vec{y}
\end{equation}
and $\nabla_{\bf R_q}\chi_p^{kl} \in L^2(Y^m;\R^3)$
\begin{equation}\label{eq:local_elastostatic}
    \int_{Y^m} C_{ijpq}\left(\vec{y} \right) \left( \delta_{pk}\delta_{ql} -  \nabla_{{\bf R}_q}\chi_p^{kl} (\vec{y})\right) \cdot {\nabla_{\bf R}}_j \phi_i(\vec{y}) \; \mathrm{d}\vec{y}= 0
\end{equation}
with periodic boundary conditions.
\end{theorem}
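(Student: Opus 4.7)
The plan is to mimic the argument used in Theorem~\ref{thm:electrostatic_hom}, replacing the scalar gradient decomposition by its vector-valued counterpart. By the coercivity hypothesis \eqref{eq:coersive_2} and the Lax-Milgram lemma, the sequence $\{\vec{u}_\e\}$ is uniformly bounded in $W^{1,2}_0(\Omega,\R^3)$. Applying Proposition~\ref{prop:grad-split} componentwise, I would extract a subsequence (still denoted $\vec{u}_\e$) and identify limits $\vec{u}\in W^{1,2}_0(\Omega,\R^3)$ and $\vec{u}_1(\vec{x},\vec{y})$ with $\mathrm{grad}_{\bf R}\vec{u}_1 \in L^2(\Omega, L^2_\sharp(Y^m;\R^{3\times 3}))$ such that
\begin{equation*}
\vec{u}_\e \dto \vec{u}(\vec{x}), \qquad \mathrm{grad}\,\vec{u}_\e \wdto \mathrm{grad}\,\vec{u}(\vec{x}) + \mathrm{grad}_{\bf R}\vec{u}_1(\vec{x},\vec{y}).
\end{equation*}

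Next I would derive the cell problem. Test \eqref{eq:elastostatic_eps} with $\e\,\vec{\phi}_\e(\vec{x}) := \e\,\vec{\phi}(\vec{x}, {\bf R}\vec{x}/\e)$ for $\vec{\phi} \in \mathcal{D}(\Omega; C^\infty_\sharp(Y^m;\R^3))$, integrate by parts, and use \eqref{eq:projected_partial_derivative} together with Proposition~\ref{prop:grad-split} to pass to the limit, obtaining
\begin{equation*}
\int_\Omega\!\!\int_{Y^m}\! C_{ijpq}(\vec{y})\bigl(\partial_{x_q} u_p(\vec{x}) + (\nabla_{{\bf R}_q}u_{1,p})(\vec{x},\vec{y})\bigr)(\nabla_{{\bf R}_j}\phi_i)(\vec{x},\vec{y})\,\mathrm{d}\vec{x}\,\mathrm{d}\vec{y}=0.
\end{equation*}
This yields the local equation. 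Separation of variables through the ansatz $\nabla_{\bf R}u_{1,p}(\vec{x},\vec{y}) = -\nabla_{\bf R}\chi_p^{kl}(\vec{y})\,\partial_{x_l} u_k(\vec{x})$, which is justified by the linearity of the local problem in the macroscopic strain, produces exactly \eqref{eq:local_elastostatic}; unique solvability of $\nabla_{\bf R}\chi_p^{kl}$ in $L^2(Y^m;\R^3)$ is obtained from Lax-Milgram on the quotient space $W^{1,2}_\sharp(Y^m)/X$ defined in \eqref{eq:W12_composition}, exactly as in the ensuing proposition for the scalar case.

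Then I would test \eqref{eq:elastostatic_eps} with $\vec{\phi}\in\mathcal{D}(\Omega;\R^3)$ (no $\vec{y}$-dependence), integrate by parts, and pass to the two-scale limit, which gives
\begin{equation*}
\int_\Omega\!\!\int_{Y^m}\! C_{ijpq}(\vec{y})\bigl(\partial_{x_q} u_p + \nabla_{{\bf R}_q}u_{1,p}\bigr)\,\partial_{x_j}\phi_i\,\mathrm{d}\vec{x}\,\mathrm{d}\vec{y} = \int_\Omega f_i(\vec{x})\phi_i(\vec{x})\,\mathrm{d}\vec{x}.
\end{equation*}
Substituting the ansatz for $\nabla_{\bf R}u_{1,p}$ and integrating out $\vec{y}$ yields \eqref{eq:homogenized_sigma_static_elast} and the strong form \eqref{eq:homogenized_static_elast}. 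Symmetry and coercivity of $C^h$ follow by choosing $\phi = \chi$ in \eqref{eq:local_elastostatic} and invoking \eqref{eq:coersive_2}, which guarantees well-posedness of the homogenized problem and hence that the whole sequence, not only the extracted subsequence, converges.

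The main obstacle, as in the scalar case treated earlier, is the degeneracy of the higher-dimensional cell operator: the symbol ${\bf R}\,C\,{\bf R}^T$ has a nontrivial kernel on the directions orthogonal to the image of ${\bf R}^T$, so the cell problem is coercive only on the constrained space $X^\perp$ defined in \eqref{eq:X-proj_space}. Consequently one can only solve for $\nabla_{\bf R}\chi_p^{kl}$, not for $\chi_p^{kl}$ itself in $W^{1,2}_\sharp(Y^m)$. This is handled exactly as in Lemma~\ref{lem:R-poisson_eqn}, either by working on the constrained space or by the regularization with an $\alpha({\bf I}_m - {\bf R}{\bf R}^T)$ term described after the electrostatic proof.
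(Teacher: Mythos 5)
Your proposal is correct and follows essentially the same route as the paper, whose proof of this theorem is literally ``mutatis mutandis the proof of Theorem~\ref{thm:electrostatic_hom}'': a priori bound, componentwise application of Proposition~\ref{prop:grad-split}, oscillating test functions $\e\,\vec{\phi}(\vec{x},{\bf R}\vec{x}/\e)$ for the cell problem, the separation ansatz $\nabla_{\bf R}u_{1,p}=-\nabla_{\bf R}\chi_p^{kl}\,\partial_{x_l}u_k$, and $\vec{x}$-only test functions for the homogenized equation, with solvability of the degenerate cell problem handled as in Lemma~\ref{lem:R-poisson_eqn}. Your added remarks on coercivity of $C^h$ and whole-sequence convergence go slightly beyond what the paper states but are consistent with it.
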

\begin{proof}
It follows mutatis mutandis the proof of Theorem \ref{thm:electrostatic_hom}.
\end{proof}

\begin{proposition}
The local equation \eqref{eq:local_elastostatic} has a unique solution

$
\nabla_{\bf R}\chi_p^{kl} \in L^2(Y^m;\R^9)
$
\end{proposition}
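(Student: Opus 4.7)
The plan is to mirror the Lax--Milgram argument used in the preceding proposition for the electrostatic local problem, with the additional care needed because the unknown is now a tensor-valued corrector and the coercivity is on (symmetrized) gradients. For each fixed pair of indices $(k,l) \in \{1,2,3\}^2$, I would view \eqref{eq:local_elastostatic} as a variational problem for the corrector $\chi^{kl} = (\chi_p^{kl})_{p=1,2,3}$ posed on the quotient-type space of admissible correctors, namely vector fields in $W^{1,2}_\sharp(Y^m; \R^3)$ whose ${\bf R}$-gradients lie in the $X^\perp$ subspace introduced in \eqref{eq:X-proj_space} (componentwise). Restricting to $X^\perp$ avoids the degeneracy of ${\bf R}{\bf R}^T$ exactly as in Lemma~\ref{lem:R-poisson_eqn}, so that $\nabla_{\bf R}\chi^{kl}$ is uniquely determined even though $\chi^{kl}$ itself need not live in $W^{1,2}_\sharp(Y^m; \R^3)$.

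First I would introduce the bilinear form
\begin{equation*}
a(\vec{\psi},\vec{\phi}) \;=\; \int_{Y^m} C_{ijpq}(\vec{y})\,\nabla_{{\bf R}_q}\psi_p(\vec{y})\,\nabla_{{\bf R}_j}\phi_i(\vec{y})\,\mathrm{d}\vec{y}
\end{equation*}
and the linear form
\begin{equation*}
\ell^{kl}(\vec{\phi}) \;=\; \int_{Y^m} C_{ijkl}(\vec{y})\,\nabla_{{\bf R}_j}\phi_i(\vec{y})\,\mathrm{d}\vec{y},
\end{equation*}
so that \eqref{eq:local_elastostatic} reads $a(\chi^{kl},\vec{\phi}) = \ell^{kl}(\vec{\phi})$ for all admissible $\vec{\phi}$. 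Boundedness of $a$ and $\ell^{kl}$ on $L^2(Y^m;\R^9)$ (identifying the $\nabla_{\bf R}$-gradients with their 9 components) is immediate from $C \in L^\infty(Y^m;\R^{9\times 9})$ and the Cauchy--Schwarz inequality. Using the symmetry of $C_{ijpq}$ (in $(i,j)$ and $(p,q)$), the integrand can be rewritten in terms of the symmetrized ${\bf R}$-gradient, so the coercivity hypothesis \eqref{eq:coersive_2} applies and yields
\begin{equation*}
a(\vec{\psi},\vec{\psi}) \;\geq\; c\, \bigl\|\mathrm{sym}\,\nabla_{\bf R}\vec{\psi}\bigr\|_{L^2(Y^m;\R^9)}^{\,2}.
\end{equation*}
The main (but routine) obstacle here is upgrading this symmetrized-gradient bound to a bound on the full ${\bf R}$-gradient: on $X^\perp$ the quotient-free Korn-type inequality in the periodic setting (applied componentwise after projecting onto the hyperplane spanned by the columns of ${\bf R}$, \emph{i.e.} the procedure described in the paragraph following Theorem~\ref{thm:electrostatic_hom}) gives the desired control $\|\nabla_{\bf R}\vec{\psi}\|_{L^2} \leq C\,\|\mathrm{sym}\,\nabla_{\bf R}\vec{\psi}\|_{L^2}$.

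With boundedness and coercivity established on the Hilbert space of admissible $\nabla_{\bf R}\vec{\psi}$, the Lax--Milgram theorem produces a unique solution $\nabla_{\bf R}\chi^{kl} \in L^2(Y^m;\R^9)$, exactly as in the scalar electrostatic case. As a sanity check, testing \eqref{eq:local_elastostatic} with $\vec{\phi} = \chi^{kl}$ and combining coercivity with $\|C\|_{L^\infty} < \infty$ reproduces the a priori bound $\|\nabla_{\bf R}\chi^{kl}\|_{L^2(Y^m;\R^9)} \leq c_2/c_1$, matching the estimate obtained in the electrostatic proposition.
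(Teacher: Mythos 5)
Your argument is essentially the paper's: the paper simply takes $\phi_i=\chi_i^{kl}$ in \eqref{eq:local_elastostatic}, uses coercivity and boundedness of $C$ to get the a priori bound $\|\nabla_{\bf R}\chi^{kl}\|_{L^2(Y^m;\R^9)}\leq c_2/c_1$, and concludes by Lax--Milgram exactly as in the electrostatic proposition; your bilinear-form setup, the restriction to $X^\perp$ to kill the degeneracy of ${\bf R}{\bf R}^T$, and the closing a priori estimate all match that. The one genuine divergence is your detour through the symmetrized ${\bf R}$-gradient and a Korn-type inequality. As the hypothesis \eqref{eq:coersive_2} is written --- $C_{ijkl}\xi_{ij}\xi_{kl}\geq c\,\xi_{ij}^2$ for \emph{all} $\xi\in\R^{3\times3}$, not only symmetric $\xi$ --- coercivity of $a$ on the full ${\bf R}$-gradient is immediate and no Korn inequality is needed; this is the route the paper takes. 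Your version is the more physically honest one (a true elasticity tensor, having the minor symmetries, can only be coercive on symmetric matrices), but then the Korn step is doing real work and you have only asserted it: a Korn inequality for the degenerate operator $\nabla_{\bf R}$ on $Y^m$ is not established anywhere in the paper. It does in fact hold --- in Fourier variables $\nabla_{\bf R}$ acts by $2\pi i\,{\bf R}^T\vec{k}$, and the classical periodic computation $\int \nabla_{{\bf R}_j}\psi_i\,\nabla_{{\bf R}_i}\psi_j\,\mathrm{d}\vec{y}=\sum_{\vec{k}}|({\bf R}^T\vec{k})\cdot\hat{\vec{\psi}}_{\vec{k}}|^2\geq 0$ gives $\|\nabla_{\bf R}\vec{\psi}\|_{L^2}^2\leq 2\,\|\mathrm{sym}\,\nabla_{\bf R}\vec{\psi}\|_{L^2}^2$ --- so if you keep this route you should include that one-line Fourier proof rather than appeal to the standard periodic Korn inequality, which does not directly cover $\nabla_{\bf R}$.
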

\begin{proof}
Let the test function $\phi_i(\vec{y}) = \chi_i^{kl}(\vec{y})$ in \eqref{eq:local_elastostatic}.
We get
\begin{equation*}
    \int_{Y^m} C_{ijpq}\left(\vec{y} \right) \left( \delta_{pk}\delta_{ql} -  \nabla_{{\bf R}_q}\chi_p^{kl} (\vec{y})\right) \cdot {\nabla_{\bf R}}_j \chi_{i}^{kl}(\vec{y}) \; \mathrm{d}\vec{y}= 0
\end{equation*}
which gives
\begin{equation*}
    c_1 \|{\nabla_{\bf R}}\chi^{kl} \|^2_{L^2(Y^m;\R^9)} \leq c_2 \|{\nabla_{\bf R}}\chi^{kl} \|_{L^2(Y^m;\R^9)}
\end{equation*}
\ie
\begin{equation*}
    \|{\nabla_{\bf R}}\chi^{kl} \|_{L^2(Y^m;\R^9)} \leq \frac{c_2}{c_1}
\end{equation*}
\end{proof}
Note that we do not have a bound for $\chi_i^{kl}\in L^2(Y^m)$.

\subsection{Homogenization of the quasistatic magnetic case}
Consider the  quasi\-periodic heterogeneous quasistatic magnetic problem
 \begin{equation}  \label{eq:magnetostatic_eps}
     \left\{
        \begin{aligned}
          \mathrm{curl} \, \epsilon^{-1}\left(\frac{{\bf R} \vec{x}}{\e}\right)  \mathrm{curl} \, \vec{u}_\e(\vec{x}) & = \vec{f} (\vec{x}) \,, \qquad \vec{x} \in \Omega\\
            \hat{\vec{\nu}}\times\epsilon^{-1}\mathrm{curl}\,\vec{u}_\e|_{\partial\Omega} & = \vec{0}
        \end{aligned}
    \right.
 \end{equation}
where $ \hat{\vec{\nu}}$ is the unit normal to the boundary, $ \epsilon^{-1}$ is bounded and corecive \eqref{eq:coersive_3} and  the driving term $\vec{f}$ belongs to the dual of ${\cal H}_0(\mathrm{curl},\Omega)$. The solutions are uniformly bounded in ${\cal H}_0(\mathrm{curl},\Omega)$ with respect to $\e$.

\begin{theorem}\label{thm:quasistatic_magn_homog}
There exists a subsequence of $\{\vec{u}_\e\}$ that converges weakly in \\ ${\cal H}_0(\mathrm{curl},\Omega)$  to the solution $\{\vec{u}\}$ of the homogenized equation
 \begin{equation}  \label{eq:homogenized_magnetostatic_strong}
     \left\{
        \begin{aligned}
   \mathrm{curl} \,  \epsilon^{-1}_h  \mathrm{curl} \, \vec{u}(\vec{x})  & =    \vec{f} (\vec{x})  \;, \qquad \vec{x} \in \Omega\\
               \hat{\vec{\nu}}\times\epsilon^{-1}_h\mathrm{curl}\,\vec{u}|_{\partial\Omega} & = \vec{0}
        \end{aligned}
        \right.
\end{equation}
where
\begin{equation}  \label{eq:homogenized_epsilon_static}
     \epsilon^{-1}_{ij,h} =   \int_{Y^m} \epsilon^{-1}_{ij}\left(\vec{y} \right) \left( \delta_{jk} - \left(\left({\bf R}^T\nabla_{\vec{y}}\right)\times {\vec{\chi}^k} (\vec{y})\right)\right)_j\; \mathrm{d}\vec{y}
\end{equation}
and $\left({\bf R}^T\nabla_{\vec{y}}\right)\times\vec{\chi}^k$ solves the local equation
\begin{equation}\label{eq:local_magnetostatic_1}
    \int_{Y^m} \epsilon^{-1}_{ij}\left(\vec{y} \right) \left( \delta_{jk} -  \left(\left({\bf R}^T\nabla_{\vec{y}}\right)\times\vec{\chi}^k (\vec{y})\right)\right)_j   \left(\left({\bf R}^T\nabla_{\vec{y}}\right)\times\vec{\phi}(\vec{y})\right)_i  \; \mathrm{d}\vec{y}=0
\end{equation}
with periodic boundary conditions and $\vec{\phi}\in
{\cal H}_{\sharp}(\mathrm{curl}_{\bf R}, Y^m)$.
\end{theorem}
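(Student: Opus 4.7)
The plan is to proceed in parallel with the proof of Theorem~\ref{thm:electrostatic_hom}, replacing the role of the gradient compactness (Proposition~\ref{prop:grad-split}) by the curl compactness (Proposition~\ref{prop:curl_split}). First I would write down the variational formulation of \eqref{eq:magnetostatic_eps},
\begin{equation*}
\int_\Omega \epsilon^{-1}\!\left(\frac{{\bf R}\vec{x}}{\e}\right)\mathrm{curl}\,\vec{u}_\e \cdot \mathrm{curl}\,\vec{v}\,\mathrm{d}\vec{x} \;=\; \langle \vec{f},\vec{v}\rangle,\qquad \vec{v}\in{\cal H}_0(\mathrm{curl},\Omega),
\end{equation*}
and extract a subsequence given by Proposition~\ref{prop:curl_split} with
$\vec{u}_{\e_k} \wdto \vec{u}(\vec{x}) + \mathrm{grad}_{\bf R}\phi\xy$ and $\mathrm{curl}\,\vec{u}_{\e_k} \wdto \mathrm{curl}\,\vec{u}(\vec{x}) + \mathrm{curl}_{\bf R}\vec{u}_1\xy$.

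Second, I would test with oscillating fields of the form
\begin{equation*}
\vec{v}_\e(\vec{x}) = \vec{v}_0(\vec{x}) + \e\,\vec{\psi}\!\left(\vec{x},\frac{{\bf R}\vec{x}}{\e}\right),
\end{equation*}
with $\vec{v}_0\in\mathcal{D}(\Omega,\R^3)$ and $\vec{\psi}\in\mathcal{D}(\Omega;C^\infty_\sharp(Y^m,\R^3))$. By the chain rule \eqref{eq:projected_partial_derivative},
\begin{equation*}
\mathrm{curl}\,\vec{v}_\e \;=\; \mathrm{curl}\,\vec{v}_0(\vec{x}) + \mathrm{curl}_{\bf R}\vec{\psi}\!\left(\vec{x},\frac{{\bf R}\vec{x}}{\e}\right) + \e\,\mathrm{curl}_{\vec{x}}\vec{\psi}\!\left(\vec{x},\frac{{\bf R}\vec{x}}{\e}\right),
\end{equation*}
which two-scale (cut-and-projection) converges strongly to $\mathrm{curl}\,\vec{v}_0 + \mathrm{curl}_{\bf R}\vec{\psi}$. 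Combining the strong convergence of this factor with the weak two-scale convergence of $\mathrm{curl}\,\vec{u}_{\e_k}$ and the coercivity/boundedness of $\epsilon^{-1}$, and noting that the right-hand side reduces to $\int_\Omega \vec{f}\cdot\vec{v}_0\,\mathrm{d}\vec{x}$ because $\e\vec{\psi}\to 0$ in $L^2(\Omega,\R^3)$, the limit identity reads
\begin{equation*}
\int_\Omega\!\int_{Y^m}\!\epsilon^{-1}(\vec{y})\bigl[\mathrm{curl}\,\vec{u}(\vec{x}) + \mathrm{curl}_{\bf R}\vec{u}_1\xy\bigr]\cdot\bigl[\mathrm{curl}\,\vec{v}_0(\vec{x}) + \mathrm{curl}_{\bf R}\vec{\psi}\xy\bigr]\,\mathrm{d}\vec{x}\mathrm{d}\vec{y} = \int_\Omega \vec{f}\cdot\vec{v}_0\,\mathrm{d}\vec{x}.
\end{equation*}

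Third, I would decouple this identity into the cell and macroscopic problems. Taking $\vec{v}_0\equiv \vec{0}$ and letting $\vec{\psi}$ vary yields the local equation for $\vec{u}_1\xy\in L^2(\Omega;\mathcal{H}_\sharp(\mathrm{curl}_{\bf R},Y^m))$; making the separation-of-variables ansatz $\mathrm{curl}_{\bf R}\vec{u}_1\xy = -(\mathrm{curl}\,\vec{u})_k(\vec{x})\,\mathrm{curl}_{\bf R}\vec{\chi}^k(\vec{y})$ and localising in $\vec{x}$ (using density of tensor products $h(\vec{x})\vec{\phi}(\vec{y})$) gives exactly the cell problem \eqref{eq:local_magnetostatic_1}. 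Conversely, taking $\vec{\psi}\equiv \vec{0}$ and substituting the ansatz gives
\begin{equation*}
\int_\Omega \epsilon^{-1}_{ik,h}(\mathrm{curl}\,\vec{u})_k(\mathrm{curl}\,\vec{v}_0)_i\,\mathrm{d}\vec{x} = \int_\Omega \vec{f}\cdot\vec{v}_0\,\mathrm{d}\vec{x}
\end{equation*}
with $\epsilon^{-1}_{ik,h}$ as in \eqref{eq:homogenized_epsilon_static}, which is the weak form of \eqref{eq:homogenized_magnetostatic_strong}; the natural boundary condition $\hat{\vec{\nu}}\times\epsilon^{-1}_h\mathrm{curl}\,\vec{u}|_{\partial\Omega}=\vec{0}$ is absorbed by testing with $\vec{v}_0\in\mathcal{H}_0(\mathrm{curl},\Omega)$. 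Well-posedness of \eqref{eq:homogenized_magnetostatic_strong} then promotes convergence of the full sequence.

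The main obstacle I anticipate is the same degeneracy issue that arose for $\mathrm{div}_{\bf R}\,\mathrm{grad}_{\bf R}$ in Lemma~\ref{lem:R-poisson_eqn}: the cell operator $\mathrm{curl}_{\bf R}(\epsilon^{-1}\mathrm{curl}_{\bf R}\,\cdot\,)$ is degenerate on the subspace $X$ defined in \eqref{eq:W12_composition}, so one must work in a quotient of $\mathcal{H}_\sharp(\mathrm{curl}_{\bf R},Y^m)$ modulo $\mathcal{H}_\sharp(\mathrm{curl}_{{\bf R}_{\vec{0}}},Y^m)$ (using Lemma~\ref{lem:curlfree} and the orthogonal decomposition of Lemma~\ref{lem:orthogonality}) to secure unique solvability of the cell problem and therefore legitimacy of the separation-of-variables ansatz. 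Coercivity of $\epsilon^{-1}$ together with the regularisation/projection trick described after Lemma~\ref{lem:R-poisson_eqn} then yields, via Lax--Milgram on this reduced space, the existence and uniqueness of $\mathrm{curl}_{\bf R}\vec{\chi}^k\in L^2(Y^m;\R^3)$, closing the argument.
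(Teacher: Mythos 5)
Your proposal is correct and follows essentially the same route as the paper: extract the two-scale limits via Proposition~\ref{prop:curl_split}, pass to the limit against oscillating and macroscopic test functions, separate variables to obtain the cell problem \eqref{eq:local_magnetostatic_1}, and identify $\epsilon^{-1}_h$; the only cosmetic difference is that you combine the two test-function choices into a single corrector-type test field $\vec{v}_0+\e\vec{\psi}$, where the paper tests with $\e\vec{\phi}$ and with $\vec{\phi}_1\in{\cal H}(\mathrm{curl},\Omega)$ separately. Your closing remark on the degeneracy of the cell operator and the need to work modulo ${\cal H}_{\sharp}(\mathrm{curl}_{{\bf R}_{\vec{0}}},Y^m)$ is consistent with what the paper defers to the remark following the theorem.
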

\begin{proof}
Choosing test functions $\e\, \vec{\phi}$, where  $\vec{\phi} \in D(\Omega;C^\infty_\sharp(Y^m,\R^3))$ and defining $\vec{\phi}_\e(\vec{x}):= \vec{\phi}(\vec{x},\frac{{\bf R} \vec{x}}{\e})$ gives after an integration by parts
 (or choose $\vec{\phi}_1\in{\cal H}(\mathrm{curl},\Omega)$, $\vec{\phi}_2\in
{\cal H}_{\sharp}(\mathrm{curl}_{\bf R}, Y^m)$ and use e.g. $\vec{\phi} = \vec{\phi}_1\otimes \vec{\phi}_2$ or  $\vec{\phi}_1 \in D(\Omega)$ and define  $\vec{\phi} = \vec{\phi}_1 \vec{\phi}_2$).
\begin{equation*}  \label{eq:weak_magnetostatic_eps}
       \begin{aligned}
&\int_\Omega \left(\epsilon^{-1}\left(\frac{{\bf R} \vec{x}}{\e}\right)  \mathrm{curl} \, \vec{u}_\e(\vec{x}) \right) \cdot \left(\e \,
\mathrm{curl} \,\vec{\phi}_\e(\vec{x}) + \mathrm{curl}_{\bf R} \vec{\phi}_\e(\vec{x})\right) \; \mathrm{d}\vec{x} \\
&= \int_\Omega \vec{f} (\vec{x}) \cdot \e \, \vec{\phi}_\e(\vec{x})\; \mathrm{d}\vec{x}
       \end{aligned}
\end{equation*}
Sending $\e \to 0$ gives due to Proposition~\ref{prop:curl_split}
 \begin{equation*}  \label{eq:locallimit_magnetostatic}
    \int_\Omega  \int_{Y^m} \left(\epsilon^{-1}\left(\vec{y} \right) \left( \mathrm{curl}\, \vec{u}(\vec{x}) +  \mathrm{curl}_{\bf R} \,\vec{u}_1(\vec{x}, \vec{y})\right)\right) \cdot \mathrm{curl}_{\bf R}\, \vec{\phi}(\vec{x}, \vec{y}) \; \mathrm{d}\vec{x} \mathrm{d}\vec{y}= 0
\end{equation*}
 We separate the variables by assuming
$ \mathrm{curl}_{\bf R} \,\vec{u}_1(\vec{x}, \vec{y}) =-(\nabla_{\bf R} \times \vec{\chi}^k (\vec{y})) \,(\nabla_x\times\vec{u}(\vec{x}))_k$
yields the local equation
\begin{equation*} 
    \int_{Y^m} \left( \epsilon^{-1}_{ij}\left(\vec{y} \right) \left( \delta_{jk} -  \left(\nabla_{\bf R} \times \vec{\chi}^k (\vec{y})\right)_j\right) \right) \left(\nabla_{\bf R} \times \vec{\phi}_2(\vec{y})\right)_i  \; \mathrm{d}\vec{y}=0  \; , \; \vec{\phi}_2\in
{\cal H}_{\sharp}(\mathrm{curl}_{\bf R}, Y^m)
\end{equation*}
This equation has bounded solution $\nabla_{\bf R} \times \vec{\chi}^k (\vec{y})$ in $L^2(Y^m;\R^3)$, which follows by coercivity and Lax-Milgrams Lemma.
The homogenized equations are obtained by choosing test function $\vec{\phi}_1\in{\cal H}(\mathrm{curl},\Omega)$. We get in the limit
 \begin{equation*}  
      \begin{aligned}
&\int_\Omega  \int_{Y^m} \epsilon^{-1}\left(\vec{y} \right) \left( \mathrm{curl} \vec{u}(\vec{x}) +  \mathrm{curl}_{\bf R} \vec{u}_1(\vec{x}, \vec{y})\right) \cdot \mathrm{curl} \phi(\vec{x}) \; \mathrm{d}\vec{x} \mathrm{d}\vec{y} \\
&=   \int_\Omega  \int_{Y^m} \vec{f} (\vec{x}) \cdot \phi(\vec{x}) \; \mathrm{d}\vec{x} \mathrm{d}\vec{y}
      \end{aligned}
\end{equation*}

That is
 \begin{equation*}  
    \int_\Omega  \epsilon^{-1}_h  \mathrm{curl}  \vec{u}(\vec{x})  \cdot \mathrm{curl} \phi(\vec{x}) \; \mathrm{d}\vec{x}  =   \int_\Omega   \vec{f} (\vec{x}) \cdot \phi(\vec{x}) \; \mathrm{d}\vec{x}
\end{equation*}
where the homogenized coefficient is given by
\begin{equation*} 
          \epsilon^{-1}_{ik,h} =   \int_{Y^m} \epsilon^{-1}_{ij}\left(\vec{y} \right) \left( \delta_{jk} -  \left(\nabla_{\bf R} \times \vec{\chi}^k (\vec{y})\right)_j \right) \; \mathrm{d}\vec{y}
\end{equation*}
which concludes the proof.
\end{proof}

\begin{remark}
Regarding uniqueness of the solution to the local equation, it is just like in Theorem~\ref{thm:electrostatic_hom}, up to replacement of gradients by curls. Likewise for the corrector, it is similar to Proposition~\ref{prop:correctors} up to replacement of gradients by curls. We further note that the main result of \cite{Bouchitte+etal2010}, Theorem 1.3, makes use of a continuous linear map to deduce two-scale convergence of curls from that of gradients (in Fourier space) for the homogenization of the Maxwell system.
\end{remark}

Let us now study some illustrative examples.

\subsection{Illustrative examples: quasiperiodic two-dimensional and layered media}
We will give two electrostatic examples:  one dimensional quasiperiodic composites, modelling a laminate, and a two dimensional case defined by the Penrose tiling.
\subsubsection{Homogenization of a quasiperiodic layered medium}
Let us start with a one-dimensional quasicrystal which can be homogenized analytically. An example of a periodic medium whose cut-and-projection generates a so-called Fibonacci quasicrystal which is shown in figure \ref{figsiam}. This is a straightforward application of Theorem~\ref{thm:electrostatic_hom}, where we consider a cut-and-projection with $n=1$, $m=2$ and  ${\bf R}^T=(1,\tau)$.
The auxiliary problem \eqref{eq:local_static} takes the form
\begin{equation*}
-\sum_{i=1}^{2}R_i\frac{\partial}{\partial y_i}\left(\sigma(\vec{y})R_i\frac{\partial}{\partial y_i}\chi(\vec{y})\right)
=\left(\sum_{i=1}^{2}R_i\frac{\partial}{\partial y_i}\sigma(\vec{y})\right) \; ,
\end{equation*}
with $\chi(\vec{y})$ $Y^2$-periodic.
Integrating over $Y^2$, we deduce that
\begin{equation}\label{eq:part_int_Ym}
-\sigma(\vec{y})\sum_{i=1}^{2}R_i\frac{\partial}{\partial y_i}\chi(\vec{y})=\sigma(\vec{y})+C \; ,
\end{equation}
where $C$ is an integration constant, so that
\begin{equation*}
-\sum_{i=1}^{2}R_i\frac{\partial}{\partial y_i}\chi(\vec{y})=1+C\sigma^{-1}(\vec{y}) \; .
\end{equation*}
From the $Y^2$ periodicity of $\chi(\vec{y})$, we conclude that
\begin{equation}
\label{eq:1dconst}
\int_{Y^2} \left( 1+C\sigma^{-1}(\vec{y}) \right) d\vec{y} = 0 \; .
\end{equation}
From \eqref{eq:homogenized_sigma_static}, we finally obtain the expression for the homogenized coefficient
\begin{equation*}
\begin{aligned}
\sigma^h &=\displaystyle{\int_{Y^2}\left(
\sigma(\vec{y})+\sigma(\vec{y})\sum_{i=1}^{2}R_i\frac{\partial}{\partial y_i}\chi(\vec{y})\right) \, d\vec{y}} \\
&= \displaystyle{\int_{Y^2}\sigma(\vec{y})d\vec{y} -\int_{Y^2}\left(\sigma(\vec{y})+C\right)d\vec{y}}
= \displaystyle{{\left(\int_{Y^2}\sigma^{-1}(\vec{y})d\vec{y}\right)}^{-1}} \; ,
\end{aligned}
\label{effective1d}
\end{equation*}
where we have used \eqref{eq:part_int_Ym} and (\ref{eq:1dconst}). One notes that specific values of $\tau$ in {\bf R} is not needed in the computation of the homogenized coefficient, which is simply a harmonic mean in $Y^2$.

\subsubsection{Homogenization of a Penrose tiling}
We would like finally to study a second type of quasicrystal  Al-Mn alloy, which is the decagonal phase discovered by Bendersky in 1985 \cite{Bendersky1985}. This phase is periodic along one direction on the diffraction diagram, and quasiperiodic in the other two directions. This quasicrystal can be viewed as Penrose tilings stacked on top of one another along the direction of periodicity. We note that the homogenization of Penrose tilings has been solved using an energy approach in \cite{Braides2009}.

We shall focus here on the analysis of effective properties  of the Al-Mn alloy in the transverse quasiperiodic plane, and thus disregard the third, periodic, direction.
Mathematically, a Penrose tiling can be generated from a four-dimensional periodic structure.
The matrix transpose of ${\bf R}$ describing a Penrose tiling \cite{Jaric1988}  is given by
\begin{equation*}
{\bf R}^T = \frac{1}{\sqrt{2}}
\begin{pmatrix}
\frac{\tau-1}{\sqrt{3}} & \frac{-\tau}{\sqrt{3}} & \frac{-\tau}{\sqrt{3}} & \frac{\tau-1}{\sqrt{3}} \\ \\
\frac{\sqrt{\tau+2}}{\sqrt{5}} & \frac{\sqrt{3-\tau}}{\sqrt{5}} & \frac{-\sqrt{3-\tau}}{\sqrt{5}} & \frac{-\sqrt{\tau+2}}{\sqrt{5}}
\end{pmatrix}
\end{equation*}
Making use of Theorem \ref{thm:electrostatic_hom}, the effective conductivity of the Penrose tiling
is given by
\begin{equation*}
\sigma^h = {\int_{Y^4}\sigma\left(\vec{y} \right)}
\left [
 {\bf I} - {\sum_{k=1}^4}\left(
  \begin{aligned}
  &
 {R_{k1}\frac{\partial \chi^1\left(\vec{y} \right)}{\partial y_k}}  \;\; {R_{k1}\frac{\partial \chi^2\left(\vec{y} \right)}{\partial y_k}}\\\\
 &
 {R_{k2}\frac{\partial \chi^1\left(\vec{y} \right)}{\partial
 y_k}} \; \; {R_{k2}\frac{\partial \chi^2\left(\vec{y} \right)}{\partial y_k}}
\end{aligned}
 \right )
\right ] \, \mathrm{d}\vec{y}
\end{equation*}
and $\left({\bf R}^T_j\nabla_{\vec{y}}\right)\chi^k \in L^2(Y^4; \R^2)$
solves the local equation
\begin{equation*}
 {\int_{Y^4}\sigma\left(\vec{y} \right)}
\left [
 {\bf I} - {\sum_{k=1}^4}\left(
  \begin{aligned}
  &
 {R_{k1}\frac{\partial \chi^1\left(\vec{y} \right)}{\partial y_k}}  \;\; {R_{k1}\frac{\partial \chi^2\left(\vec{y} \right)}{\partial y_k}}\\\\
 &
 {R_{k2}\frac{\partial \chi^1\left(\vec{y} \right)}{\partial
 y_k}}  \;\; {R_{k2}\frac{\partial \chi^2\left(\vec{y} \right)}{\partial y_k}}
\end{aligned}
 \right )
\right ]
 \sum_{l=1}^4
\left (
  \begin{aligned}
 &
 R_{l1}
 \frac{\partial \phi\left(\vec{y} \right)}
 {\partial y_l}\\
 &
  R_{l2}
 \frac{\partial \phi\left(\vec{y} \right)}
 {\partial y_l}
\end{aligned}
 \right)
\mathrm{d}\vec{y} =
\left (
  \begin{aligned}
 & 0 \\
 & 0
\end{aligned}
 \right )
\end{equation*}

\section{Concluding remarks}\label{sec:Conclusions}
In this paper,
we have derived a number of weak and strong compactness results for two-scale  cut-and-projection convergence. This concept allows for rapid identification of limits  of sequences of solutions and of the corresponding  PDEs with fast oscillating periodic coefficients in a higher dimensional space, which model quasiperiodic oscillations in a projected physical space. Amongst illustrative examples, we treated the homogenization of the electrostatic, elastostatic,  and quasistatic equations in quasiperiodic media.  Our results can be adapted to homogenization of quasiperiodic perforated and porous media. Importantly, most results stated here for sequences of functions in $L^2$ spaces, can be translated to $L^p$ spaces, including $L^1$ with a notion of two-scale cut-and-projection convergence of measures. Work is also in progress regarding the extension of our results to non-linear PDEs, reiterated homogenization of quasiperiodic multiscale media and homogenization of quasiperiodic spectral problems. For the latter, one could make use of Bloch's theorem in the higher dimensional space, and computations suggest some interesting self-similar features of band structure at long wavelengths \cite{Zolla1998,Rodriguez+etal2008}. Finally, the irrational operators introduced in Section 4 have been used to adapt the classical two-scale asymptotic approach to the quasiperiodic setting \cite{Cherkaev+etal2019}.

\section*{Acknowledgments}
SG wishes to thank the Department of Mathematics at Imperial College London for a visiting position in the group of Prof. R.V. Craster in 2018-2019 (funded by EPSRC grant EP/L024926/1). SG also acknowledges the Unit\'e Mixte Internationale CNRS-Imperial Abraham de Moivre and funding from department INSIS of CNRS. EC acknowledges support from the U.S. NSF through grant DMS-1715680.

\appendix
\section{Appendix}\label{sec:Appendix}
We have the following integration by parts lemmas

\begin{lemma}[Green's Identity] \label{Integration_by_parts_green}
It holds that
\begin{equation}
 -\int_{Y^m} \mathrm{div}_{\bf R} \; \vec{\phi}(\vec{y})  \;   {\theta}(\vec{y}) \;\mathrm{d}\vec{y} = \int_{Y^m}    \vec{\phi}(\vec{y})  \cdot  \mathrm{grad}_{\bf R} \;{\theta}(\vec{y}) \;\mathrm{d}\vec{y}
\end{equation}
for  $\vec{\phi} \in  H_{\sharp}(\mathrm{div}_{\bf R}, Y^m)$ and  $\theta \in  H_{\sharp}(\mathrm{grad}_{\bf R}, Y^m)$.
\end{lemma}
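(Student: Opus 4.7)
The plan is to reduce Green's identity for the ${\bf R}$-operators to the standard integration by parts on the periodic torus $Y^m$, using the key observation from Remark~\ref{rem:divR_and_curlR} that $\mathrm{div}_{\bf R}\vec{\phi} = \nabla_{\vec{y}}\cdot({\bf R}\vec{\phi})$, together with the definition $\mathrm{grad}_{\bf R}\theta = {\bf R}^T\nabla_{\vec{y}}\theta$. These two identities turn the ${\bf R}$-integration by parts into an $m$-dimensional one where both gradients/divergences are the usual ones in $\vec{y}$.

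First I would establish the statement for smooth periodic representatives $\vec{\phi}\in C^\infty_\sharp(Y^m;\R^n)$ and $\theta\in C^\infty_\sharp(Y^m)$, where all manipulations are pointwise. Using $\mathrm{grad}_{\bf R}\theta = {\bf R}^T\nabla_{\vec{y}}\theta$ and the adjoint identity $\vec{\phi}\cdot({\bf R}^T\nabla_{\vec{y}}\theta) = ({\bf R}\vec{\phi})\cdot\nabla_{\vec{y}}\theta$, rewrite
\begin{equation*}
\int_{Y^m}\vec{\phi}(\vec{y})\cdot\mathrm{grad}_{\bf R}\theta(\vec{y})\,\mathrm{d}\vec{y} = \int_{Y^m}({\bf R}\vec{\phi})(\vec{y})\cdot\nabla_{\vec{y}}\theta(\vec{y})\,\mathrm{d}\vec{y}.
\end{equation*}
Now apply the classical Green's identity on $Y^m$ viewed as the flat torus: the integrand on the right equals $\nabla_{\vec{y}}\cdot\bigl(\theta\,{\bf R}\vec{\phi}\bigr) - \theta\,\nabla_{\vec{y}}\cdot({\bf R}\vec{\phi})$, and the first term integrates to a boundary term on $\partial Y^m$. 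By $Y^m$-periodicity of $\theta$ and of the components of $\vec{\phi}$, the contributions on opposite faces of the unit cube cancel, so the boundary term vanishes. Invoking the identity $\nabla_{\vec{y}}\cdot({\bf R}\vec{\phi})=\mathrm{div}_{\bf R}\vec{\phi}$ gives the claim for smooth data.

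Finally, I would extend the identity to the full function spaces by density. By the definitions \eqref{def:grad-Yspace} and \eqref{def:divR-Yspace}, both $\mathrm{div}_{\bf R}$ and $\mathrm{grad}_{\bf R}$ are continuous from their respective graph-norm spaces into $L^2_\sharp(Y^m)$ and $L^2_\sharp(Y^m;\R^n)$. The smooth periodic functions $C^\infty_\sharp(Y^m)$ (resp.\ $C^\infty_\sharp(Y^m;\R^n)$) are dense in $\mathcal{H}_\sharp(\mathrm{grad}_{\bf R},Y^m)$ (resp.\ $\mathcal{H}_\sharp(\mathrm{div}_{\bf R},Y^m)$), which follows by standard mollification against a smooth periodic kernel, noting that convolution commutes with the constant-coefficient operator ${\bf R}^T\nabla_{\vec{y}}$ and hence with both $\mathrm{grad}_{\bf R}$ and $\mathrm{div}_{\bf R}$. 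Passing to the limit in both sides of the identity using Cauchy--Schwarz then yields the general result.

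The main obstacle, and really the only non-routine point, is verifying the density of smooth periodic functions in $\mathcal{H}_\sharp(\mathrm{div}_{\bf R},Y^m)$ and $\mathcal{H}_\sharp(\mathrm{grad}_{\bf R},Y^m)$; however, since these spaces are defined by an $L^2$ control of a constant-coefficient first-order differential expression on the torus, the standard periodic mollifier argument applies verbatim. Everything else is a direct consequence of the algebraic identity between ${\bf R}$-operators and the usual $\vec{y}$-operators.
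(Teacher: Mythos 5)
Your proof is correct and follows essentially the same route as the paper's: both reduce the identity to standard integration by parts on the torus via $\mathrm{div}_{\bf R}\vec{\phi}=\nabla_{\vec{y}}\cdot({\bf R}\vec{\phi})$ and the adjoint relation $\vec{\phi}\cdot({\bf R}^T\nabla_{\vec{y}}\theta)=({\bf R}\vec{\phi})\cdot\nabla_{\vec{y}}\theta$, with periodicity cancelling the boundary terms. The only difference is that you make explicit the smooth-approximation and density step that the paper's proof performs directly on the graph spaces without comment; this is a reasonable added precaution, not a change of method.
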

\begin{proof}
By the definition of operators $\mathrm{div}_{\bf R}$ and $\mathrm{grad}_{\bf R}$,  Remark~\ref{rem:divR_and_curlR} and periodic boundary conditions it follows that
\begin{equation}
\begin{aligned}
&
 -\int_{Y^m} \mathrm{div}_{\bf R} \; \vec{\phi}(\vec{y})  \;   {\theta}(\vec{y}) \;\mathrm{d}\vec{y} =
 -\int_{Y^m} ({\bf R}^T\nabla_y ) \cdot \vec{\phi}(\vec{y})  \;   {\theta}(\vec{y}) \;\mathrm{d}\vec{y} =
 \\
 &
 -\int_{Y^m} \nabla_y  \cdot {\bf R}\vec{\phi}(\vec{y})  \;   {\theta}(\vec{y}) \;\mathrm{d}\vec{y} =
 \int_{Y^m}   {\bf R} \vec{\phi} \cdot  \nabla_y \;{\theta}(\vec{y}) \;\mathrm{d}\vec{y}=
 \\
 &
 \int_{Y^m}    \vec{\phi}(\vec{y})  \cdot  {\bf R}^T\nabla_y \;{\theta}(\vec{y}) \;\mathrm{d}\vec{y}=
 \int_{Y^m}    \vec{\phi}(\vec{y})  \cdot  \mathrm{grad}_{\bf R}\; {\theta}(\vec{y}) \;\mathrm{d}\vec{y}
\end{aligned}
\end{equation}
for any pair of functions $\vec{\phi} \in  H_{\sharp}(\mathrm{div}_{\bf R}, Y^m)$ and  $\theta \in  H_{\sharp}(\mathrm{grad}_{\bf R}, Y^m)$.
\end{proof}

\begin{lemma}[Stokes' theorem] \label{Integration_by_parts_Stokes}
It holds that
\begin{equation}
 \int_{Y^m} \mathrm{curl}_{\bf R} \; \vec{\phi}(\vec{y})   \cdot  \vec{\theta}(\vec{y}) \;\mathrm{d}\vec{y} =
 \int_{Y^m}    \vec{\phi}(\vec{y})  \cdot  \mathrm{curl}_{\bf R} \; \vec{\theta}(\vec{y}) \;\mathrm{d}\vec{y}
\end{equation}
for all $\vec{\phi},  \vec{\theta} \in  H_{\sharp}(\mathrm{curl}_{\bf R}, Y^m)$.
\end{lemma}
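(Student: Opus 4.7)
The plan is to mirror the approach of Lemma~\ref{Integration_by_parts_green}: reduce the identity to ordinary integration by parts on $Y^m$ by unwinding the cross-product structure of $\mathrm{curl}_{\bf R}$. First, by density of $C^\infty_\sharp(Y^m;\R^3)$ in $H_\sharp(\mathrm{curl}_{\bf R},Y^m)$ (for the graph-norm topology), it suffices to prove the identity for smooth $Y^m$-periodic vector fields and then pass to the limit using continuity of both sides in the graph norm.

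Next I would use the component expression in Remark~\ref{rem:divR_and_curlR}, writing
\begin{equation*}
(\mathrm{curl}_{\bf R}\,\vec{\phi})_i(\vec{y}) \;=\; \epsilon_{ijk}\,({\bf R}^T\nabla_{\vec{y}})_j\,\phi_k(\vec{y}) \;=\; \epsilon_{ijk}\sum_{l=1}^m R_{lj}\,\partial_{y_l}\phi_k(\vec{y}),
\end{equation*}
where $\epsilon_{ijk}$ is the Levi-Civita symbol. Substituting into the left-hand side, the integrand is $\epsilon_{ijk}R_{lj}(\partial_{y_l}\phi_k)\theta_i$. Since the $R_{lj}$ are constants and the functions are $Y^m$-periodic, integration by parts in each variable $y_l$ produces no boundary contribution, giving
\begin{equation*}
\int_{Y^m}\epsilon_{ijk}R_{lj}(\partial_{y_l}\phi_k)\theta_i\,\mathrm{d}\vec{y} \;=\; -\int_{Y^m}\epsilon_{ijk}R_{lj}\,\phi_k\,\partial_{y_l}\theta_i\,\mathrm{d}\vec{y}.
\end{equation*}

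The final step is purely algebraic: applying the antisymmetry $\epsilon_{ijk}=-\epsilon_{kji}$ absorbs the minus sign, so the right-hand side becomes
\begin{equation*}
\int_{Y^m}\phi_k\,\epsilon_{kji}R_{lj}\,\partial_{y_l}\theta_i\,\mathrm{d}\vec{y} \;=\; \int_{Y^m}\phi_k\,(\mathrm{curl}_{\bf R}\,\vec{\theta})_k\,\mathrm{d}\vec{y} \;=\; \int_{Y^m}\vec{\phi}\cdot\mathrm{curl}_{\bf R}\,\vec{\theta}\,\mathrm{d}\vec{y},
\end{equation*}
which is the claimed identity. A conceptually equivalent route, if one prefers, is to expand $\vec{\phi},\vec{\theta}$ in Fourier series on $Y^m$, observe that $\mathrm{curl}_{\bf R}$ acts in Fourier space as multiplication by $2\pi i\,{\bf R}^T\vec{k}\,\times$, and then use the scalar-triple-product identity $(\vec{a}\times\vec{b})\cdot\vec{c}=\vec{b}\cdot(\vec{c}\times\vec{a})$ together with Parseval to swap the curl from $\vec{\phi}$ to $\vec{\theta}$.

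There is no real obstacle here: once the constants $R_{lj}$ are pulled through the integral and the Levi-Civita antisymmetry is exploited, the proof is a direct index-level computation. The only point requiring a little care is justifying the density/approximation step for functions in $H_\sharp(\mathrm{curl}_{\bf R},Y^m)$, but this follows from the standard truncation-of-Fourier-series argument used throughout the paper (cf.\ the orthogonal decomposition proved in Lemma~\ref{lem:orthogonality}), since both $\vec{\phi}\mapsto\vec{\phi}$ and $\vec{\phi}\mapsto\mathrm{curl}_{\bf R}\vec{\phi}$ are continuous from the graph norm into $L^2_\sharp(Y^m;\R^3)$, and both sides of the asserted identity are continuous bilinear forms in this topology.
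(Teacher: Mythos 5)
Your proof is correct and follows essentially the same route as the paper's: the paper simply invokes the structure of $\mathrm{curl}_{\bf R}$ from Remark~\ref{rem:divR_and_curlR} and repeats the periodic integration-by-parts argument of Lemma~\ref{Integration_by_parts_green}, which is exactly your Levi-Civita index computation made explicit. Your additional care with the density/Fourier-truncation step (needed because individual partials of an $H_\sharp(\mathrm{curl}_{\bf R},Y^m)$ field need not lie in $L^2$) is a welcome refinement that the paper leaves implicit.
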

\begin{proof}
Using the observation in Remark~\ref{rem:divR_and_curlR}, we can invoke the similar arguments as in Lemma~\ref{Integration_by_parts_green} to get
\begin{equation}
\begin{aligned}
&
\int_{Y^m} \mathrm{curl}_{\bf R} \; \vec{\phi}(\vec{y})   \cdot  \vec{\theta}(\vec{y}) \;\mathrm{d}\vec{y} =
 \int_{Y^m}    \vec{\phi}(\vec{y})  \cdot  \mathrm{curl}_{\bf R} \; \vec{\theta}(\vec{y}) \;\mathrm{d}\vec{y}
\end{aligned}
\end{equation}
for any pair of functions $\vec{\phi},\vec{\theta}\in  H_{\sharp}(\mathrm{curl}_{\bf R}, Y^m)$.
\end{proof}


\bibliographystyle{unsrt}
\bibliography{quasirefsarxiv}



\end{document}